\documentclass[a4paper, reqno]{amsart}

\usepackage[utf8]{inputenc}
\usepackage{eucal, stmaryrd}
\usepackage{mathtools}
\usepackage{graphicx}
\usepackage[pdftex,colorlinks,citecolor=black,linkcolor=black,
            urlcolor=black,bookmarks=false]{hyperref}
\usepackage{hyphenat}
\usepackage[table]{xcolor}
\usepackage{blkarray}
\usepackage{longtable}

\setkeys{Gin}{draft=false}

\usepackage{caption}
\newtheorem{lemma}{Lemma}[section]
\newtheorem{theorem}[lemma]{Theorem}
\newtheorem{conjecture}[lemma]{Conjecture}

\theoremstyle{definition}
\newtheorem{definition}[lemma]{Definition}

\newcommand{\Z}{\mathbb{Z}}

\newcommand{\R}{\mathbb{R}}
\newcommand{\C}{\mathbb{C}}
\newcommand{\Rplus}{\R_+}

\newcommand{\Mcal}{\mathcal{M}}

\newcommand{\Scal}{\mathcal{S}}

\newcommand{\Ucal}{\mathcal{U}}
\newcommand{\Xcal}{\mathcal{X}}

\newcommand{\indf}[1]{\mathbf{1}_{#1}}     
\newcommand{\tp}{\mathsf{T}}               
\newcommand{\sorto}{\mathrm{SO}}           
\newcommand{\suni}{\mathrm{SU}}            
\newcommand{\one}{\mathbf{1}}              
\DeclareMathOperator{\trace}{tr}           
\newcommand{\floor}[1]{\lfloor #1\rfloor}  
\newcommand{\polar}{\bullet}               
\DeclareMathOperator{\angdst}{\angle}      
\DeclareMathOperator{\cayley}{Cayley}      

\newcommand{\smallpmatrix}[1]{\left(\begin{smallmatrix}#1\end{smallmatrix}\right)}

\newcommand{\zint}[4]{#1#3\mathinner{\ldotp\ldotp}#4#2}

\newenvironment{optprob}
{
  \arraycolsep=0pt
  \begin{array}{r@{\ }l@{\quad}l}
}%
{
  \end{array}
}

\newcommand{\onerow}[1]{\multicolumn{2}{l}{#1}}

\newlength\claimlen

{
  \end{minipage}%
  \end{equation}%
  \ignorespacesafterend%
}

\newcommand{\defi}[1]{\textit{#1}}

 \makeatletter
 \newcommand*{\centerfloat}{%
   \parindent\z@%
   \leftskip\z@\@plus 1fil\@minus\textwidth%
   \rightskip\leftskip%
   \parfillskip\z@skip}
 \makeatother

\title{Bounding the density of spherical polygon packings}

\author{Fernando Mário de Oliveira Filho}
\address{F.M. de Oliveira Filho, Delft Institute of Applied Mathematics,
  Delft University of Technology, Mekelweg~4, 2628~CD Delft, The
  Netherlands.}
\email{F.M.deOliveiraFilho@tudelft.nl}

\author{Andreas Spomer}
\address{A. Spomer, Department Mathematik/Informatik, Abteilung
  Mathematik, Universität zu Köln, Weyertal 86--90, 50931 Köln,
  Germany.}
\email{andreas.spomer1996@gmail.com}

\author{Frank Vallentin}
\address{F. Vallentin, Department Mathematik/Informatik, Abteilung
  Mathematik, Universität zu Köln, Weyertal 86--90, 50931 Köln,
  Germany.}
\email{frank.vallentin@uni-koeln.de}

\subjclass[2010]{52C17, 90C23, 90C30}

\date{April 23, 2026}

\begin{document}

\begin{abstract}
  We determine putative optimal packings of regular spherical polygons via
  optimization on smooth manifolds.  For several cases, we establish maximality
  by extending the Lovász theta number to Cayley graphs on the special
  orthogonal group~$\sorto(3)$.  To this end, we introduce an algebraic
  criterion characterizing when congruent regular spherical polygons have
  disjoint interiors, leading to a unified formulation of the packing
  constraints. Using harmonic analysis on~$\sorto(3)$, we reduce the theta
  number to a trigonometric sum-of-squares problem, which can be solved via
  semidefinite programming.
\end{abstract}

\maketitle
\markboth{F.M. de Oliveira Filho, A. Spomer, and F. Vallentin}{Bounding the
density of spherical polygon packings}

\setcounter{tocdepth}{1}
\tableofcontents


\section{Introduction}%
\label{sec:intro}

Let~$X$ be a shape---a spherical cap, a spherical polygon, etc.---on the
$2$-dimen\-sional unit sphere~$S^2$ in~$\R^3$.  The packing problem for~$X$ asks:
how many nonoverlapping, congruent copies of~$X$ fit on the sphere?
Figure~\ref{fig:packings} shows packings of spherical caps and regular triangles
on~$S^2$.

\begin{figure}[t]
  \centerfloat
  \includegraphics[width=6cm]{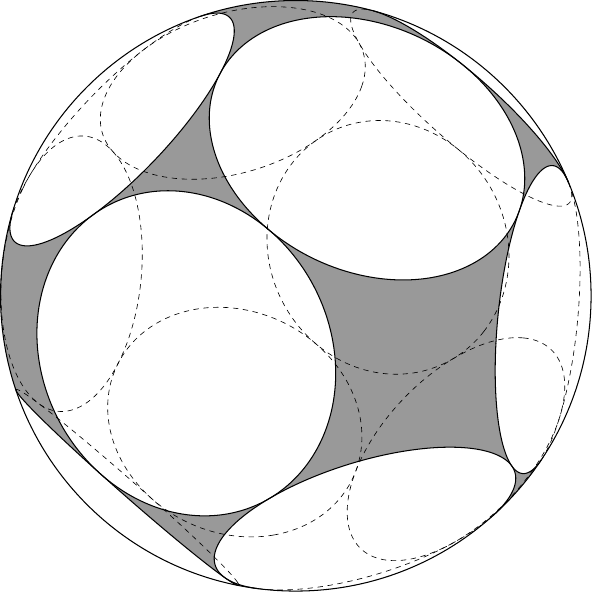}
  \qquad
  \includegraphics[width=6cm]{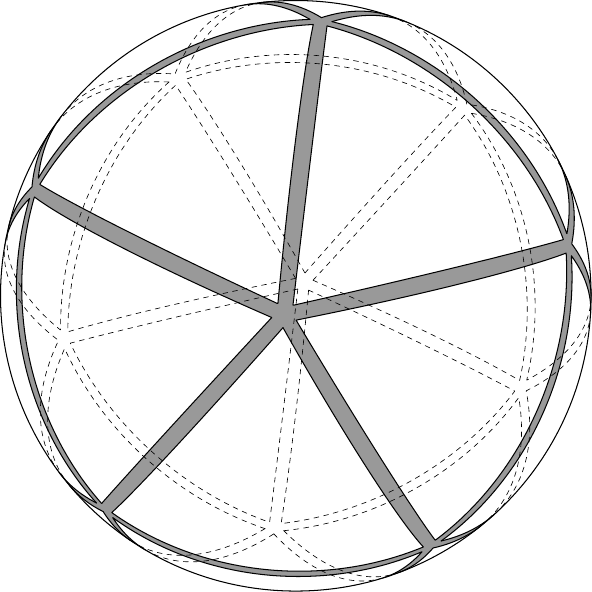}

  \caption{On the left, the spherical cap packing corresponding to the maximal
  fcc kissing configuration in~$\R^3$.  On the right, a packing of~20 spherical
  triangles corresponding to the bases of regular tetrahedra with a vertex
  at~$0$; these triangles are obtained by shrinking the~20 faces of a regular
  icosahedron.}%
  \label{fig:packings}
\end{figure}

When the shape given is a spherical cap of angular radius~$\phi$, this is the
classical problem of computing the maximum cardinality of a spherical code with
minimum angular distance~$2\phi$.  For instance, the maximum number of
nonoverlapping spherical caps with radius~$\pi/6$ that fit on~$S^2$ is the
kissing number~$\tau_3$ of~$\R^3$, that is, the maximum number of nonoverlapping
unit spheres that can simultaneously touch a central unit sphere.
Determining~$\tau_3$ is a problem that goes back to a 17th-century dispute
between Newton and Gregory~\cite{Casselman2004}.  Only in~1953 did Schütte and
van der Waerden~\cite{SchutteW1953} prove that~$\tau_3 = 12$.

A much older instance of the problem goes back to Aristotle (384 BC--322 BC),
who claimed (\textit{De Caelo}, Book~III, Chapter~VIII) that regular tetrahedra
could fill a container without leaving gaps. This led to the question whether
the space around a point can be filled with regular tetrahedra sharing the
point as a vertex.  Through time, many important scholars commented on this
problem, leading to a fascinating discussion (for a summary, see Lagarias and
Zong~\cite{LagariasZ2012} and Oliveira and Vallentin~\cite{OliveiraV2018}).  A
proof of the negative answer came only in the~15th century through the now-lost
work of Johannes Müller von Königsberg (1436--1476), known as Regiomontanus; a
proof by Francesco Maurolico (1494--1575) was recently
rediscovered~\cite{Addabbo2015}.

So it is not possible to completely fill the space around a point with
nonoverlapping regular tetrahedra, but how many nonoverlapping regular
tetrahedra can share a vertex in common?  A regular tetrahedron with a vertex
at~$0$ defines an equilateral triangle~$T$ on~$S^2$.  Thus the question can be
equivalently reformulated as: how many nonoverlapping congruent copies of~$T$
fit on~$S^2$? This maximum is also known as the \defi{kissing number} of the
tetrahedron.
\bigbreak

\centerline{$*\qquad*\qquad*$}
\bigbreak

A \defi{packing} of a set~$X \subseteq S^2$ is a collection of interior-disjoint
congruent copies of~$X$.  A packing is \defi{maximal} if no larger packing
exists.  A packing of a spherical cap~$C$ is \defi{optimal} if no packing with
at least as many caps of radius greater than that of~$C$ exists.  Optimality for
packings of a given regular spherical polygon is similarly defined, with the
circumradius of the polygon replacing the radius of the spherical cap.

The packing of spherical caps in Figure~\ref{fig:packings} is maximal, as shown
by Schütte and van der Waerden~\cite{SchutteW1953}, but not optimal.  The latter
fact is unclear from the picture, though the caps can be rearranged so as not to
touch each other.  The packing of spherical triangles from the same figure is
clearly not optimal, as the triangles can be enlarged, and it is not known
whether it is maximal.

Constructing good packings of spherical caps is a difficult problem, as is
proving maximality of such packings.  Much harder even is to prove optimality:
Schütte and van der Waerden~\cite{SchutteW1951} constructed spherical cap
packings with~5--11, 13--16, 24, and~32 caps and managed to prove optimality of
the configurations with~5--9 caps; the only other optimal configurations known
have~10--14 and~24 caps (see the table of spherical codes by
Cohn~\cite{Cohn2024} for further references).

In this paper, we consider the problem of finding optimal packings of regular
spherical polygons.  Here, even less is known than for spherical caps.  First,
it is harder to come up with good constructions.  Second, proving maximality is
substantially more difficult, and proofs of optimality are only known in the
trivial situation when a polygon tiles the sphere.  We approach the problem from
two sides, by presenting a method to find good packings---that is, lower
bounds---and by developing upper bounds for the cardinality of a packing.

\subsubsection*{Characterizing intersection}

Our methods to find lower and upper bounds both rely on optimization and as such
are computational in nature.  They presuppose a way to translate the packing
constraints, namely the condition that two congruent copies of a regular
spherical polygon are interior-disjoint, into a format that is amenable to
computation.

This is achieved by considering \textit{admissible} polygons
(Definition~\ref{def:admissible}), which are those for which a clean algebraic
condition for interior-disjointness can be given.
Theorems~\ref{thm:admissible-N4} and~\ref{thm:admissible-andreas} show that most
regular spherical polygons of interest are admissible.  The proofs of these
theorems require extensive geometrical work; they are based on the idea of
sentinels introduced by Mascarenhas and Birgin~\cite{MascarenhasB2010} for the
simpler situation of intersecting polygons on the plane.  Our method to find
lower bounds is based on admissibility and, though the concept is not needed for
the abstract definition of our optimization upper bound, it is crucial for
transforming the initial optimization problem into one that can be solved by a
computer.

\subsubsection*{Lower bounds: constructions}

In~\S\ref{sec:manifold-opt}, we model the problem of finding an optimal packing
with a given number of regular spherical polygons as a manifold optimization
problem. Numerical optimization methods from smooth Riemannian optimization can
then be used to find approximate local optima.  Essential for this modeling is
the characterization of intersection of regular spherical polygons encoded in
the definition of admissibility (Definition~\ref{def:admissible}), as it allows
the packing constraints to be modeled in terms of well-behaved functions.

An indication that our method produces near-optimal packings is given by
benchmark results on spherical caps, as in this case it is able to closely
approximate known optimal packings (see Table~\ref{tab:caps}
of~\S\ref{sec:results}).

\subsubsection*{Upper bounds: the Lovász theta number}

Maximality and optimality of spherical cap packings in~$S^2$ are sometimes
proved directly by \textit{ad hoc} geometrical arguments.  Such proofs are often
difficult, like the proof by Schütte and van der Waerden~\cite{SchutteW1953}
that the kissing number of~$\R^3$ is~12, that is, that the spherical cap packing
of Figure~\ref{fig:packings} is maximal.  Direct proofs of optimality, like
those of Schütte and van der Waerden~\cite{SchutteW1951} for packings with~5--9
caps, are rare and often involved; see for instance the proofs of Musin and
Tarasov for~13 and~14 points~\cite{MusinT2012, MusinT2015}.

Upper bounds based on optimization provide the most successful framework for
proving maximality/optimality of spherical cap packings.  These bounds are now
understood to derive from the Lovász theta number, a parameter introduced by
Lovász~\cite{Lovasz1979} that gives an upper bound for the independence number
of a finite graph via semidefinite programming.

The prime example here is the linear programming bound of Delsarte, Goethals,
and Seidel~\cite{DelsarteGS1977} for the cardinality of spherical cap packings,
whose connection to the theta number was established by Bachoc, Nebe, Oliveira,
and Vallentin~\cite{BachocNOV2009}.  The linear programming bound and its
offshoots like the 3-point bound of Bachoc and Vallentin~\cite{BachocV2008} can
be used to prove the maximality of several packings; such proofs amount to
finding an upper bound which, rounded down, coincides with the size of the
corresponding configuration.  In contrast, proofs of optimality require finding
a sharp upper bound, a rare occurrence.  For the most recent application of
optimization upper bounds to prove optimality of configurations and for further
references, see Cohn, de Laat, and Leijenhorst~\cite{CohnLL2024}.

A bit more general than the Delsarte-Goethals-Seidel bound is the linear
programming bound of Kabatiansky and Levenshtein~\cite{KabatianskyL1978} for
packings of metric balls in compact, connected, rank-one symmetric spaces.  The
Euclidean analogue of these bounds is the linear programming bound of Cohn and
Elkies~\cite{CohnE2003} for the sphere-packing density in~$\R^n$.  Though in
general computing the theta number of a graph amounts to solving a semidefinite
programming problem, here the initial semidefinite program collapses to a
linear program because of symmetry, hence the name ``linear programming bound''
that is commonly used.

When it comes to packings of more general shapes, only a few results appear in
the literature.  Dostert, Guzmán, Oliveira, and Vallentin~\cite{DostertGOV2017}
consider translative packings of polytopes in Euclidean space.  In this case,
the semidefinite program again collapses to a linear program.  This is no longer
the case for packings of balls of different sizes, considered by de Laat,
Oliveira, and Vallentin~\cite{LaatOV2014}, and for packings of congruent copies
of regular pentagons on the Euclidean plane, treated by Oliveira and
Vallentin~\cite{OliveiraV2018}.  In this latter setting, the semidefinite
program involves a kernel-valued function of positive type as the optimization
variable.

The problem of packing regular polygons on~$S^2$ is similar to that of packing
regular pentagons on~$\R^2$.  Our bound, presented in~\S\ref{sec:theta}, is an
extension of the Lovász theta number to an infinite Cayley graph on the special
orthogonal group~$\sorto(3)$.  In general, it does not reduce to linear
programming, and the optimization variables are positive-semidefinite kernels
over the special orthogonal group~$\sorto(3)$.  For packings of regular
spherical polygons, we use harmonic analysis, and therefore the representation
theory of~$\sorto(3)$, to parameterize such kernels.  Although the
representation theory of~$\sorto(3)$ has been worked out in great detail before,
we are not aware of any applications in the context of semidefinite programming.

The concept of admissibility (Definition~\ref{def:admissible}) is again
essential for the computation of the bound.  It allows us to transform the
initial optimization problem into a trigonometric sum-of-squares problem, which
can then be reduced to a semidefinite programming problem.  The resulting
problem is at first too large to be solved by computer.  We then have to exploit
additional symmetries, both obvious and not, stemming from the geometry of the
packing problem, to reduce the size of the semidefinite program.  This leads
in~\S\ref{sec:sos} to a framework for dealing with invariant trigonometric sums
of squares, based on the theory for classical polynomials by Gatermann and
Parrilo~\cite{GatermannP2004}. In~\S\ref{sec:sos-compare} we discuss the
important differences between the trigonometrical and classical frameworks for
invariant sums of squares.

\subsubsection*{Results}

Our results are discussed in~\S\ref{sec:results}. Some of the more interesting
packings that we compute via manifold optimization are depicted in
Figures~\ref{fig:maximal-packings} and~\ref{fig:large-packings}.  We manage to
find several maximal packings with the manifold optimization approach, using the
Lovász theta number upper bound to prove maximality. Proofs of optimality or
near-optimality depend on sharp or near-sharp bounds.  These are rare for
spherical cap packings~\cite{CohnLL2024} and, Table~\ref{tab:bounds} suggests,
even rarer for packings of regular spherical polygons, the only known cases
happening when the polygon tiles the sphere.  A future line of research would be
to develop tractable higher-order bounds for packings of regular spherical
polygons, like the $k$-point bound~\cite{LaatMOV2022} or the Lasserre
hierarchy~\cite{LaatV2015}, with the goal of proving maximality and
near-optimality of more configurations.


\begin{figure}[t]
  \setbox0=\hbox{\hskip3.5cm\quad\hskip3.5cm\qquad\hskip3.5cm\quad\hskip3.5cm}
  \centerfloat
  \includegraphics[width=3.5cm]{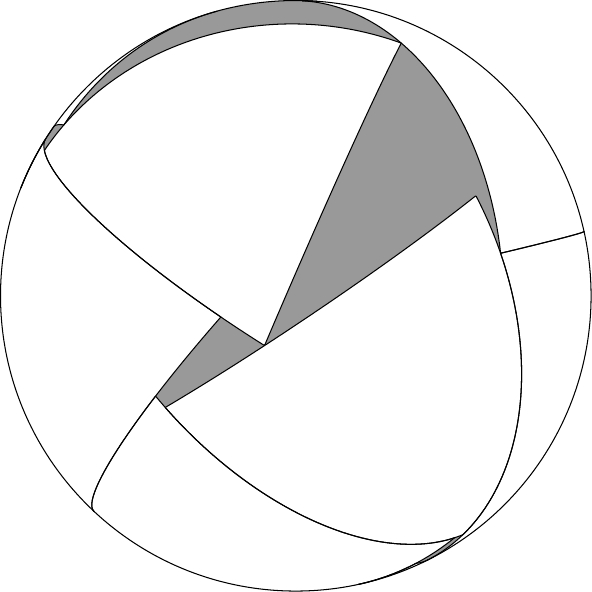}
  \quad
  \includegraphics[width=3.5cm]{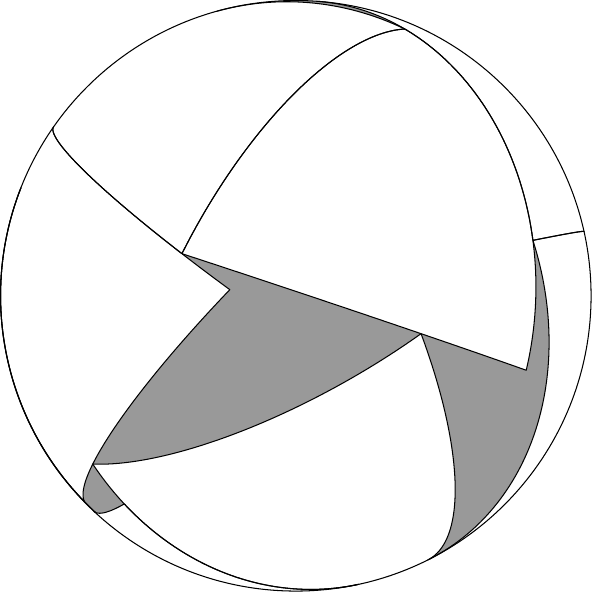}
  \qquad
  \includegraphics[width=3.5cm]{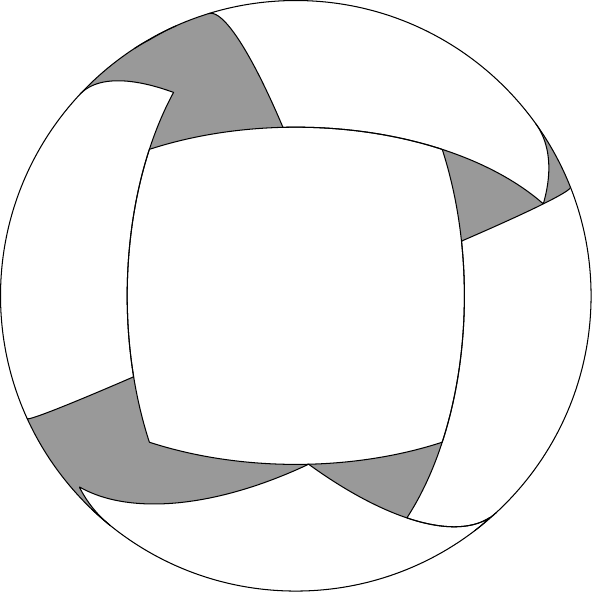}
  \quad
  \includegraphics[width=3.5cm]{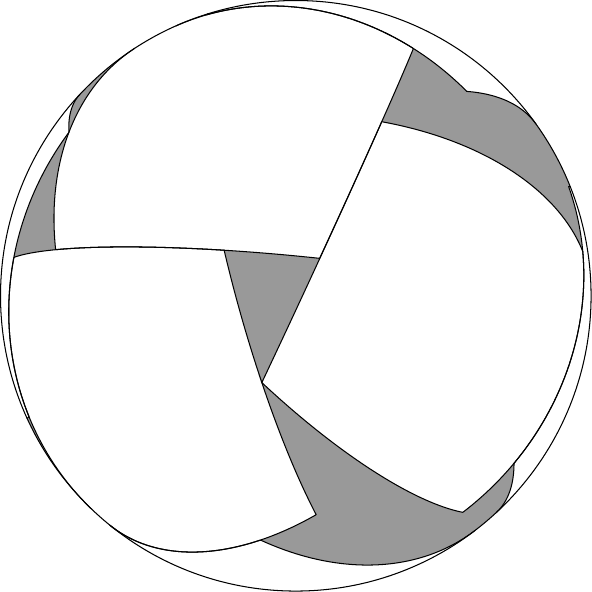}

  \centerline{\hbox to\wd0{\hskip3.3cm(a)\hfill (b)\hskip3.3cm}}
  \vskip5mm

  \includegraphics[width=3.5cm]{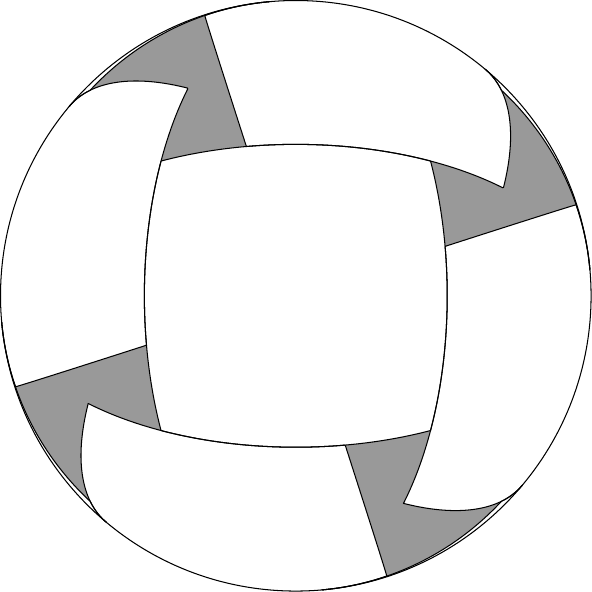}
  \quad
  \includegraphics[width=3.5cm]{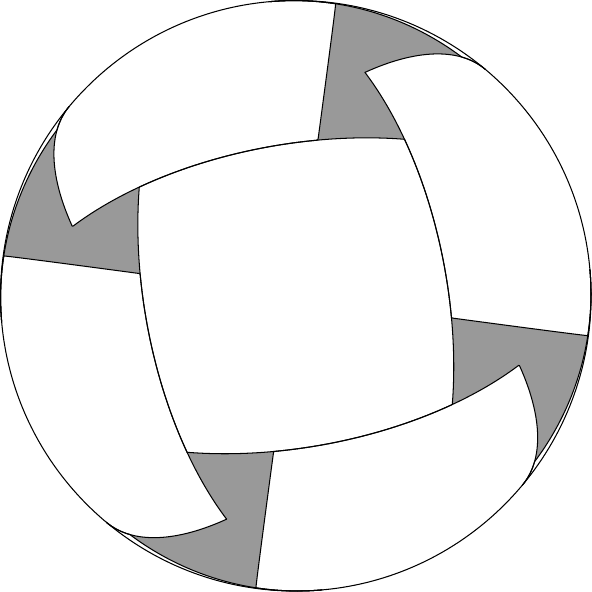}
  \qquad
  \includegraphics[width=3.5cm]{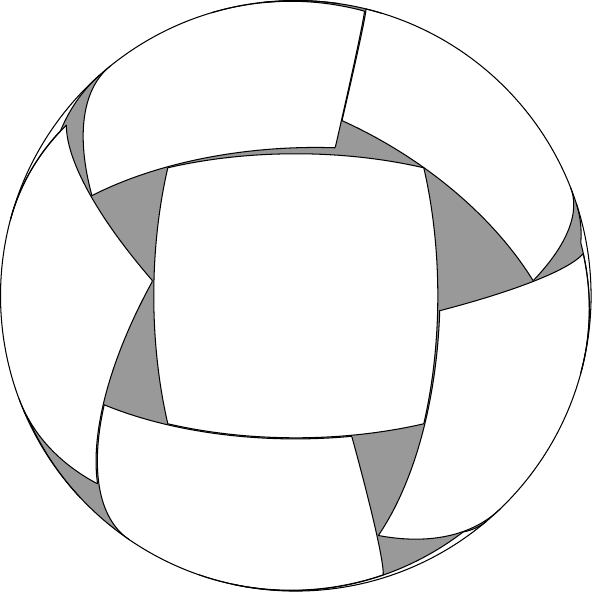}
  \quad
  \includegraphics[width=3.5cm]{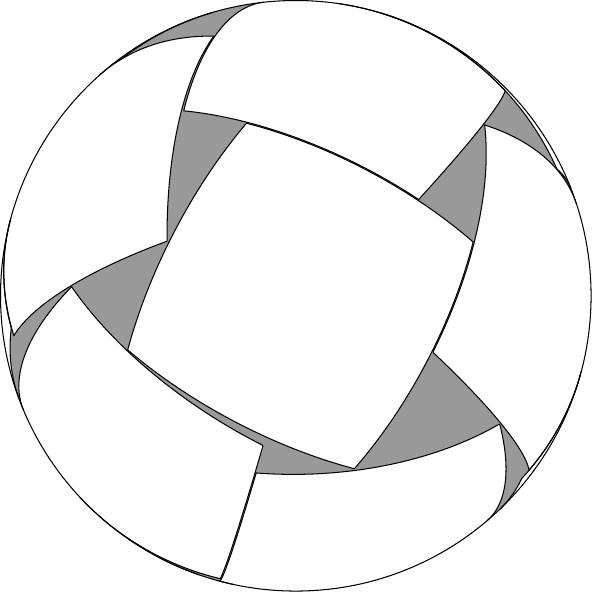}

  \centerline{\hbox to\wd0{\hskip3.3cm (c)\hfill (d)\hskip3.3cm}}
  \vskip5mm

  \includegraphics[width=3.5cm]{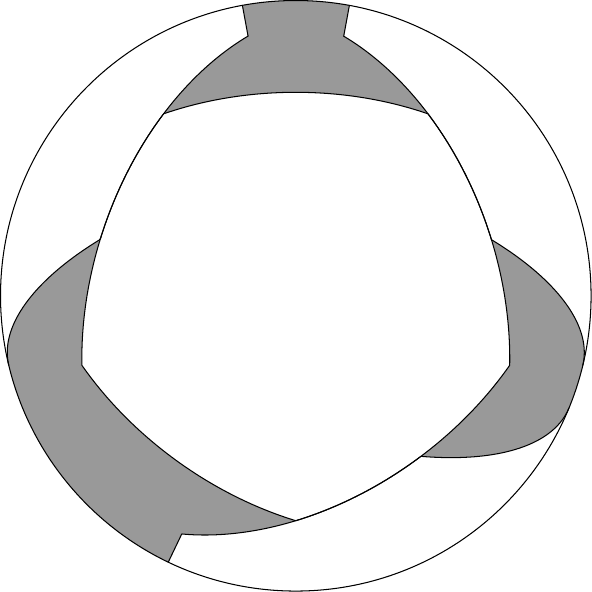}
  \quad
  \includegraphics[width=3.5cm]{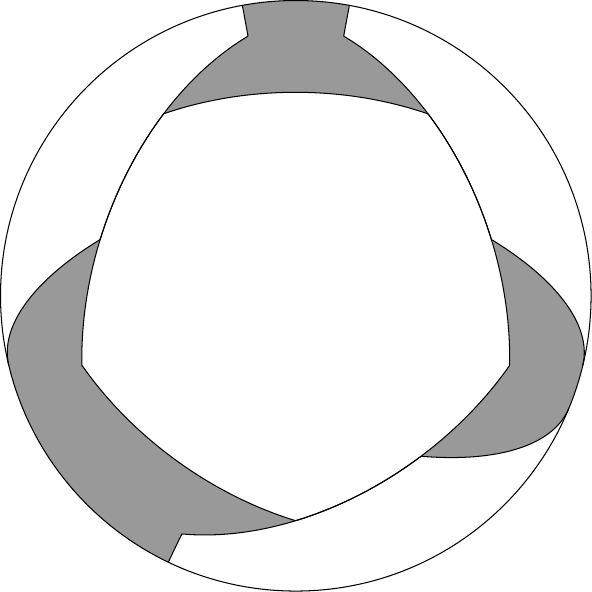}
  \qquad
  \includegraphics[width=3.5cm]{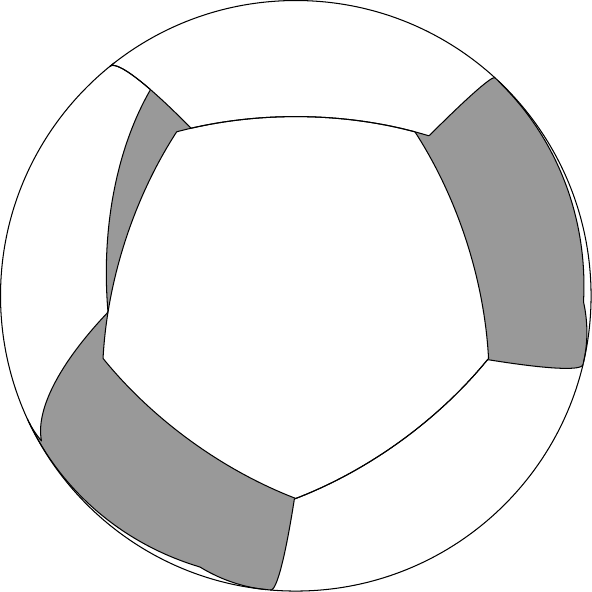}
  \quad
  \includegraphics[width=3.5cm]{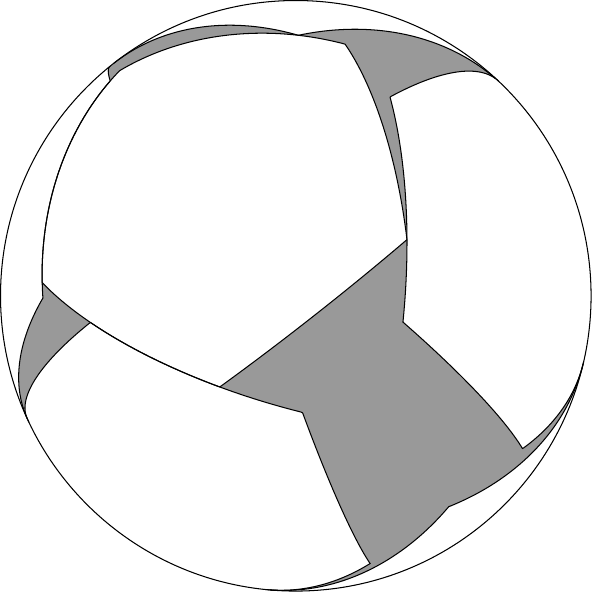}

  \centerline{\hbox to\wd0{\hskip3.3cm (e)\hfill (f)\hskip3.3cm}}

  \caption{Some of the configurations from Table~\ref{tab:bounds} proven to be
  maximal by the theta prime bound of~\S\ref{sec:theta}.  Both the front and
  back sides of each configuration are shown in orthographic projection.  (a)~10
  triangles, (b)~8 squares, (c)~10 squares, (d)~12 squares, (e)~5 pentagons, and
  (f)~7 pentagons.}%
  \label{fig:maximal-packings}
\end{figure}

\begin{figure}[t]
  \setbox0=\hbox{\hskip3.5cm\quad\hskip3.5cm\qquad\hskip3.5cm\quad\hskip3.5cm}
  \centerfloat
  \includegraphics[width=3.5cm]{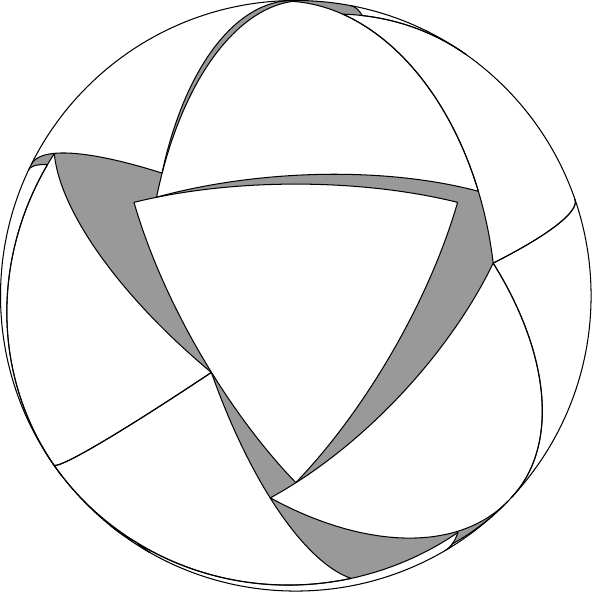}
  \quad
  \includegraphics[width=3.5cm]{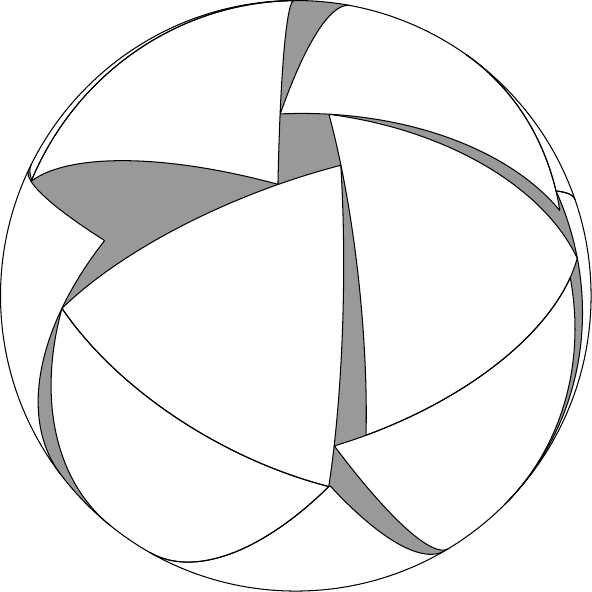}
  \qquad
  \includegraphics[width=3.5cm]{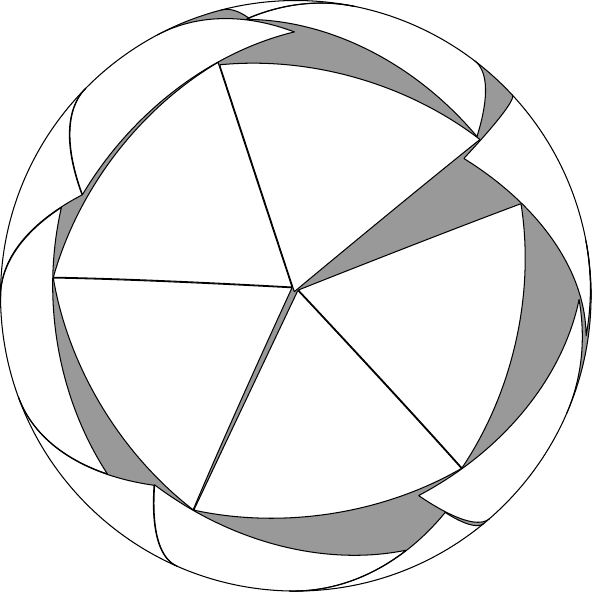}
  \quad
  \includegraphics[width=3.5cm]{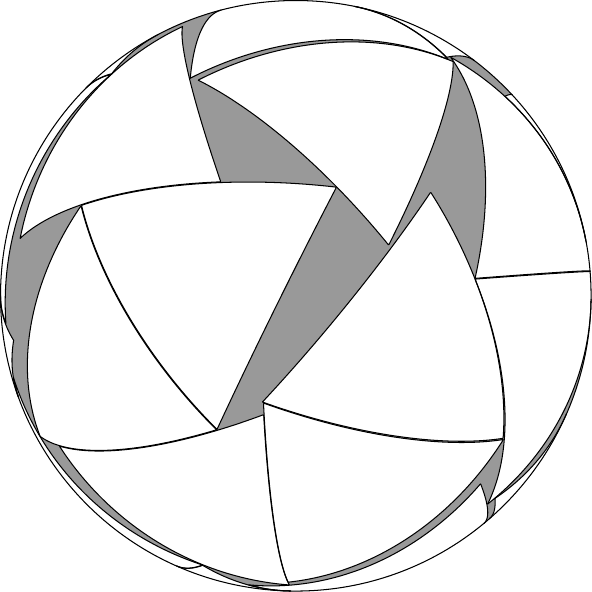}

  \centerline{\hbox to\wd0{\hskip3.3cm(a)\hfill (b)\hskip3.3cm}}
  \vskip5mm

  \includegraphics[width=3.5cm]{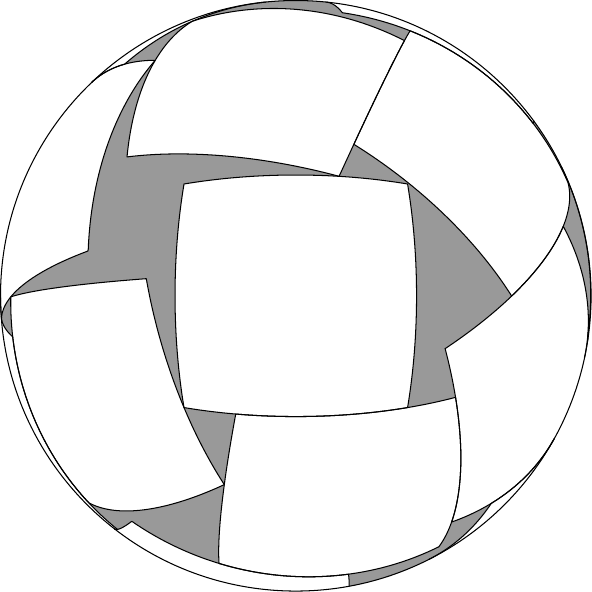}
  \quad
  \includegraphics[width=3.5cm]{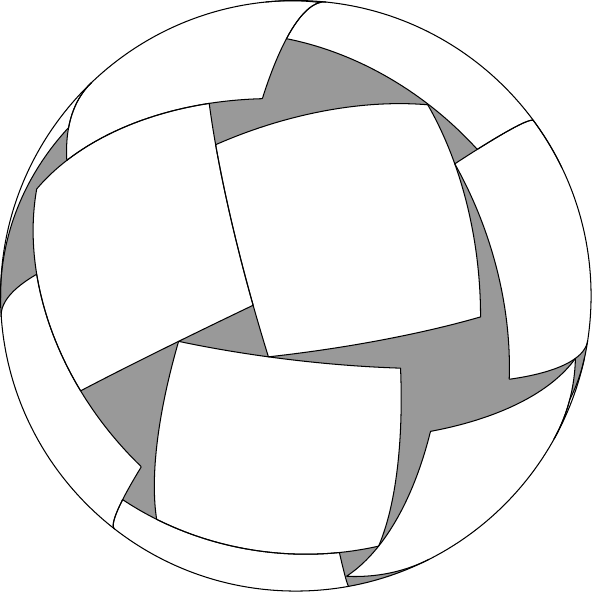}
  \qquad
  \includegraphics[width=3.5cm]{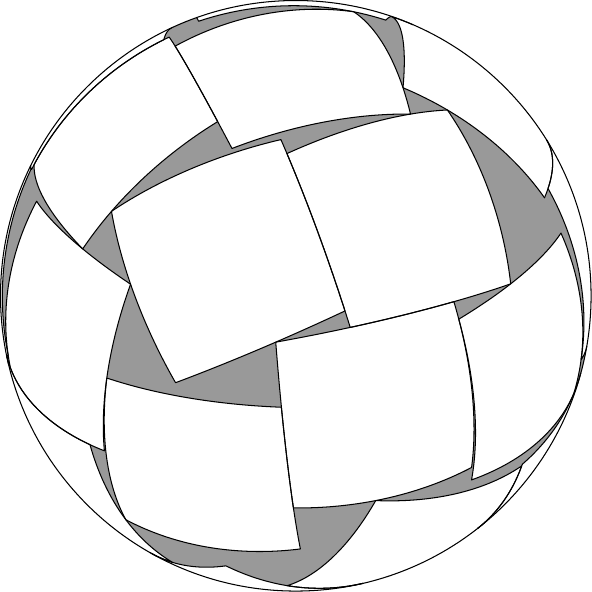}
  \quad
  \includegraphics[width=3.5cm]{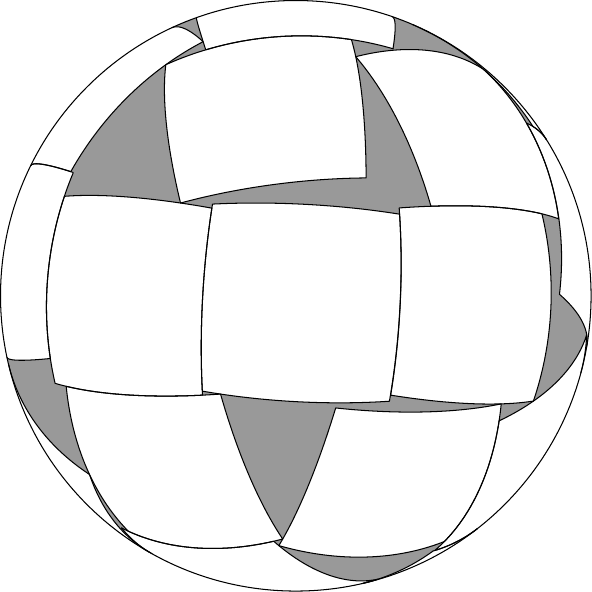}

  \centerline{\hbox to\wd0{\hskip3.3cm (c)\hfill (d)\hskip3.3cm}}
  \vskip5mm

  \includegraphics[width=3.5cm]{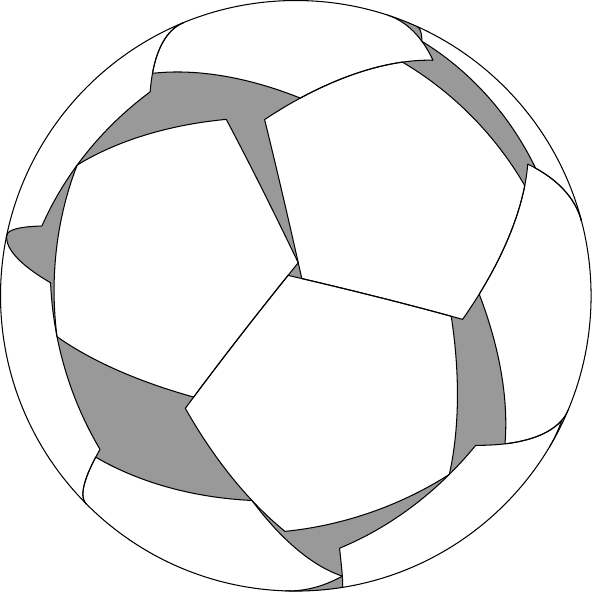}
  \quad
  \includegraphics[width=3.5cm]{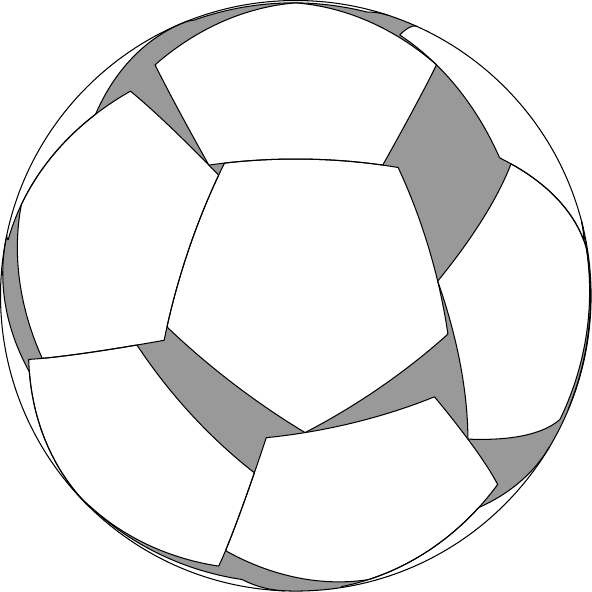}
  \qquad
  \includegraphics[width=3.5cm]{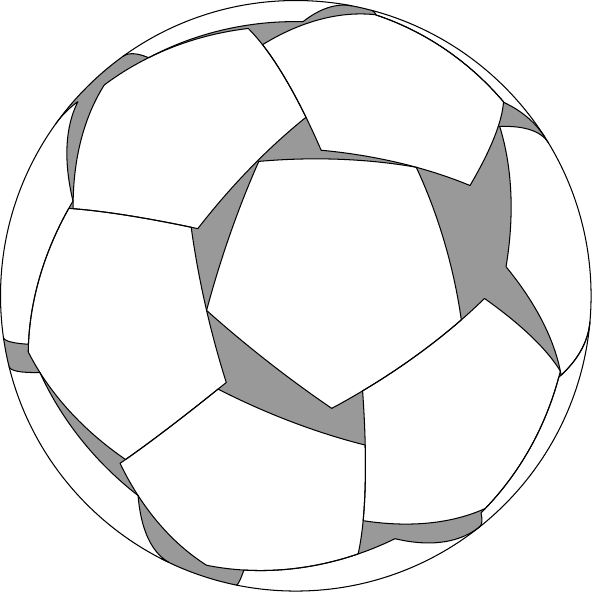}
  \quad
  \includegraphics[width=3.5cm]{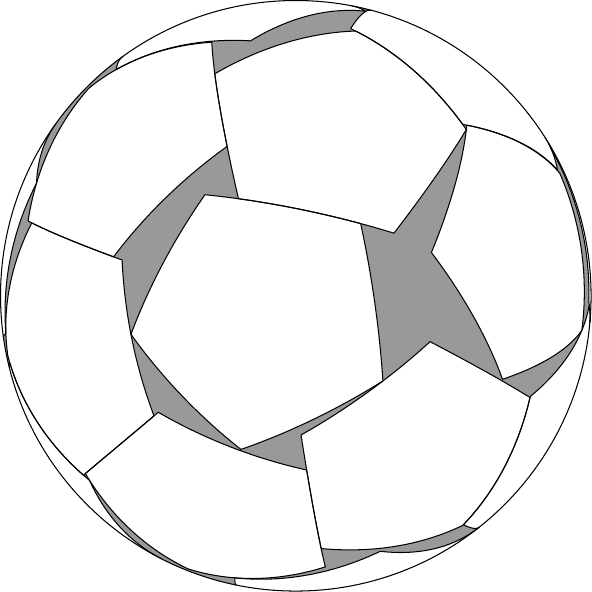}

  \centerline{\hbox to\wd0{\hskip3.3cm (e)\hfill (f)\hskip3.3cm}}

  \caption{Some of the larger configurations from Table~\ref{tab:bounds}.  Both
  the front and back sides of each configuration are shown in orthographic
  projection.  (a)~16 triangles, (b)~24 triangles, (c)~16 squares, (d)~24
  squares, (e)~16 pentagons, and (f)~20 pentagons.}%
  \label{fig:large-packings}
\end{figure}



\section{Preliminaries}
\label{sec:notation}

The interior of a set~$X$ is denoted by~$X^\circ$.  The Euclidean inner product
between~$x$, $y \in \R^n$ is~$x\cdot y = x_1 y_1 + \cdots + x_n y_n$. If~$A$, $B
\in \C^{n \times n}$ are matrices, then~$\langle A, B\rangle = \trace A^* B =
\sum_{i,j=1}^n \overline{A_{ij}} B_{ij}$ is their trace inner product.

The $(n-1)$-dimensional unit sphere is~$S^{n-1} = \{\, x \in \R^n :
\|x\|=1\,\}$.  The angular distance between points~$x$, $y \in S^{n-1}$
is~$\angdst(x, y) = \arccos x\cdot y$.  Given~$x \in S^{n-1}$ and~$\phi > 0$,
the \defi{spherical cap} with \defi{center~$x$} and \defi{angular radius~$\phi$}
is the set of all points in~$S^{n-1}$ at angular distance at most~$\phi$
from~$x$.

The special orthogonal group of~$\R^n$ is
\[
  \sorto(n) = \{\, A \in \R^{n \times n} : A^\tp A = I\text{ and }\det A =
  1\,\}.
\]
Given an angle~$\theta$, consider the rotation matrices
\begin{equation}%
  \label{eq:rotation-matrices}
  R_X(\theta) = \begin{pmatrix}
    1 & 0           & 0\\
    0 & \cos\theta  & \sin\theta\\
    0 & -\sin\theta & \cos\theta
  \end{pmatrix}
  \qquad\text{and}\qquad
  R_Z(\theta) = \begin{pmatrix}
    \cos\theta  & \sin\theta & 0\\
    -\sin\theta & \cos\theta & 0\\
    0           & 0          & 1
  \end{pmatrix}
\end{equation}
that fix~$e_1 = (1, 0, 0)$ and~$e_3 = (0, 0, 1)$, respectively.  These matrices
are elements of~$\sorto(3)$.

Given angles~$\alpha \in [0, 2\pi)$, $\beta \in [0, \pi]$, and~$\gamma \in [0,
2\pi)$, let
\begin{equation}%
  \label{eq:euler-rotation}
  R(\alpha, \beta, \gamma) = R_Z(\gamma) R_X(\beta) R_Z(\alpha).
\end{equation}
Note that~$R(\alpha, \beta, \gamma)$ is an element of~$\sorto(3)$.  Moreover,
every element of~$\sorto(3)$ can be written in this form for some
triple~$(\alpha, \beta, \gamma)$ of angles in the above intervals; these angles
are called \defi{Euler angles}, and the matrix~$R(\alpha, \beta, \gamma)$ is an
\defi{Euler rotation}.  The restriction to these intervals makes the angles
associated with a given rotation unique, except when~$\beta = 0$ or~$\beta =
\pi$, in which cases~$\alpha$ and~$\gamma$ are not uniquely determined.

Let~$N \geq 3$ be an integer and let~$\varrho \in (0, \pi/2)$.  The
\defi{standard regular $N$-gon} with \defi{angular radius}~$\varrho$ is the cone
generated by the vectors
\[
  u_i = R_Z((2i - 1)\pi / N) (0, \sin\varrho, \cos\varrho)\qquad\text{for~$i =
  0$, \dots,~$N - 1$}.
\]
A \defi{regular $N$-gon} with \defi{angular radius}~$\varrho$ is any rotation of
the standard regular $N$-gon with angular radius~$\varrho$.  A \defi{regular
spherical polygon} is just a regular $N$-gon for some~$N \geq 3$.

Let~$K$ be a regular spherical polygon.  The \defi{vertices} of~$K$ are the
intersections of the extreme rays of~$K$ with the sphere; the \defi{barycenter}
of~$K$ is the sum of its vertices, normalized to have norm~$1$.  So the vertices
of the standard regular $N$-gon are the points~$u_i$ and its barycenter is~$e_3
= (0, 0, 1)$.  The \defi{inner angle} of~$K$ is the dihedral angle between two
consecutive facets of~$K$.  The \defi{side length} of~$K$ is the angular
distance between two of its consecutive vertices.

Let~$\varrho$ be the angular radius of~$K$.  Note that~$\varrho$ is the angular
distance between the vertices of~$K$ and its barycenter.  The
\defi{circumcircle} of~$K$ is the set of all points on~$S^2$ at angular
distance~$\varrho$ from its barycenter.  The \defi{inradius} of~$K$ is the
angular distance between the barycenter of~$K$ and the midpoint of any of its
sides; such a midpoint is for instance~$(u + v) / \|u+v\|$, where~$u$, $v$ are
any two consecutive vertices of~$K$.  Let~$\tau$ be the inradius of~$K$.  The
\defi{incircle} of~$K$ is the set of all points on~$S^2$ at angular
distance~$\tau$ from its barycenter.


\subsection{A packing problem and the volume bound}%
\label{sec:packing-volume}

Here is a formal description of the problem we consider.  We say that sets~$X$,
$Y \subseteq \R^n$ are \defi{congruent} if~$X = S Y$ for some~$S \in \sorto(n)$.
A \defi{convex cone} is a nonempty set~$K \subseteq \R^n$ closed under conic
combinations: if~$x$, $y \in K$ and~$\alpha$, $\beta \geq 0$, then~$\alpha x +
\beta y \in K$.  A \defi{proper cone} is a convex cone that is closed, has
nonempty interior, and does not contain a line.  Not containing a line means
that a proper cone is \defi{pointed}, that is, that it has a vertex at~$0$. This
implies that~$0$ is not in the interior of a proper cone.

Let~$K \subseteq \R^n$ be a proper cone.  A \defi{packing} of~$K$ is a
collection of interior-disjoint congruent copies of~$K$.  The \defi{packing
number} of~$K$ is
\[
  \tau(K) = \text{maximum cardinality of a packing of~$K$}.
\]
The cone~$K$ defines the set~$X = K \cap S^{n-1}$.  The packing number of~$K$ is
the maximum number of nonoverlapping congruent copies of~$X$ that fit
on~$S^{n-1}$.

If~$\omega$ is the surface measure on~$S^{n-1}$, then an upper bound
for~$\tau(K)$ is
\[
  \tau(K) \leq \frac{\omega(S^{n-1})}{\omega(K \cap S^{n-1})},
\]
called the \defi{volume bound} for~$K$.  Since~$\tau(K)$ is integer, we can
actually take the floor of the right side above.  It is useful, however, to call
``volume bound'' the quantity above, instead of its floor.

The volume bound provides a nontrivial upper bound, but usually a weak one.  For
instance, let~$X \subseteq S^2$ be a spherical cap with angular radius~$\pi/6$
and let~$K = \Rplus X$, so~$\tau_3 = \tau(K)$.  The volume bound provides the
upper bound
\[
  \tau_3 \leq 14.9282\ldots.
\]

If~$T$ is a regular tetrahedron with a vertex at the origin and~$K = \Rplus T$,
then~$\tau(K)$ is the tetrahedron kissing number, that is, the maximum number of
nonoverlapping regular tetrahedra that share a vertex in common.  The regular
icosahedron inscribed in the unit sphere has~$20$ triangular faces and its side
length is slightly larger than~$1$.  The vertices of each face, together with
the origin, define~$20$ pyramids, which can be shrunk to give~20 regular
tetrahedra.  This shows that~$\tau(K) \geq 20$. The volume bound gives
\[
  \tau(K) \leq 22.7946\ldots,
\]
and so~$\tau(K)$ is either~$20$, $21$, or~$22$.

It is hard to imagine how to fit a~13th sphere around the central sphere or
a~21st tetrahedron around the origin (see Figure~\ref{fig:packings}). According
to the volume bound, however, one still has enough space to squeeze in two
spheres or two tetrahedra.


\section{Characterizing intersection of spherical polygons}%
\label{sec:inter-ch}

Both the manifold optimization method to find optimal packings and the
semidefinite programming approach to compute upper bounds work for polygons for
which intersection can be easily characterized in algebraic terms.  These
polygons are called admissible (recall the definition of regular spherical
polygon from~\S\ref{sec:notation}):

\begin{definition}%
  \label{def:admissible}
  A regular spherical polygon~$K$ is \defi{admissible} if for every polygon~$K'$
  congruent to~$K$ we have~$K^\circ \cap (K')^\circ = \emptyset$ if and only if
  at least one of the following holds:
  \begin{enumerate}
    \item[(i)] the circumcircles of~$K$ and~$K'$ span interior-disjoint cones;

    \item[(ii)] a facet-inducing inequality of~$K$ separates~$K$ and~$K'$;

    \item[(iii)] a facet-inducing inequality of~$K'$ separates~$K$ and~$K'$.
  \end{enumerate}
\end{definition}

In this section, we will prove the following theorem.

\begin{theorem}%
  \label{thm:admissible-N4}
  Every regular $N$-gon for~$N \geq 4$ is admissible.
\end{theorem}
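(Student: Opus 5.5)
The ``if'' direction is immediate, so the work is in the ``only if'' direction. If (i) holds, $K$ and $K'$ lie in interior-disjoint cones, namely the cones over their circumcaps. If (ii) or (iii) holds, a closed halfspace contains one polygon and the opposite closed halfspace contains the other, so the interiors cannot meet. For the converse, assume $K^\circ\cap(K')^\circ=\emptyset$ and that (i) fails, and aim to produce a facet-inducing inequality of $K$ or of $K'$ that separates them. This is the spherical analogue of the planar fact, underlying the sentinel approach of Mascarenhas and Birgin~\cite{MascarenhasB2010}, that two interior-disjoint convex polygons are separated by a line containing an edge of one of them.

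\textbf{Step 1 (separating great circle).} Since $K$ and $K'$ are convex cones with disjoint interiors, a separating hyperplane theorem gives a plane $H=\{x: a\cdot x=0\}$ with $a\cdot x\ge0$ on $K$ and $a\cdot x\le 0$ on $K'$. The set of unit normals $a$ with this property is compact and spherically convex. I would pick an extreme element of it, or equivalently start from any separating plane and rotate it until it becomes ``stuck''. The resulting extremal plane $H$ must touch both $K$ and $K'$.

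\textbf{Step 2 (rotation argument).} Suppose $H$ contains a facet of $K$ or of $K'$; then we are done. Otherwise $H\cap K$ is a single vertex ray $\Rplus p$ and $H\cap K'$ is a single vertex ray $\Rplus q$. If $q\ne\pm p$, rotate $H$ about the axis $\R p$. One rotation direction moves $H$ strictly away from $K'$ near $q$ while keeping $K$ on its side, since $K\cap H$ is only the ray through $p$. This contradicts extremality, unless the rotation immediately makes $H$ contain an edge of $K$ at $p$, in which case we are done. Symmetrically, we could rotate about $\R q$. So the only surviving configurations are degenerate ones:
\begin{enumerate}
  \item[(a)] $q=p$, where the polygons touch at a common vertex;
  \item[(b)] $q=-p$, where they touch $H$ at antipodal points.
\end{enumerate}

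\textbf{Step 3 (degenerate cases).} This is where $N\ge4$ enters, and I expect it to be the main obstacle.
\begin{itemize}
  \item[(a)] Two interior-disjoint polygons share the vertex $p$, and both sides lie in a halfspace whose boundary plane passes through $p$. Rotating $H$ about $\R p$ within the angular gap between the two vertex cones at $p$ should let $H$ reach an edge of one polygon without crossing the other. This works as long as the two inner angles at $p$ sum to at most $\pi$, which follows from interior-disjointness combined with the separating plane.
  \item[(b)] Here $p$ and $-p$ both lie on the boundaries, so $\angdst(b,p)\le\varrho$ and $\angdst(b',-p)\le\varrho$, where $b,b'$ are the barycenters. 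I would show that this configuration, together with the failure of (i) (that is, $\angdst(b,b')<2\varrho$), forces an overlap of interiors or reduces to case (a). I would first use $\varrho<\pi/2$ and explicit trigonometry with the vertices $u_i$. The key input should be that for $N\ge4$ the vertex cone at $p$ has inner angle at least that of a square-like corner, larger than for triangles, and that each polygon contains its incircle cap of radius $\tau$. Then the condition that $K$ lies on one side of $H$ while touching it only at the vertex $p$ constrains $b$ to within $\pi/2-\tau$ of $\pm a$-related directions. Combining the two constraints, I expect a contradiction of the form $\angdst(b,b')\ge 2\varrho$, i.e., (i) holds. Triangles are excluded precisely because their sharp corners let this estimate fail; that is why $N=3$ is treated separately in Theorem~\ref{thm:admissible-andreas}.
\end{itemize}

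Finally, I would check that the rotation in Step~2 never passes through a position where $H$ enters the interior of either polygon. This holds because the contact sets change only when $H$ reaches an edge, and at that moment the required facet-inducing inequality has been found.
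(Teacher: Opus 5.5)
Your Step~2 is where the argument breaks, and it carries the whole difficulty. The set of admissible normals is $P=(-K^\polar)\cap(K')^\polar$. Suppose $H=a^\perp$ meets $K$ only in $\Rplus p$ and $K'$ only in $\Rplus q$. Then near $a$ the set $P$ is $\{b: b\cdot p\geq 0,\ b\cdot q\leq 0\}$. For $q\neq\pm p$ the great circles $p^\perp$ and $q^\perp$ cross transversally at $a$, so $P$ is locally a wedge with apex $a$, and $a$ \emph{is} an extreme point. A rotation about $\R p$ staying feasible in one direction only means you can slide along one side of the wedge; that does not contradict extremality.

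Your ``degenerate'' cases are in fact the trivial ones. For $q=p$, $P$ is locally an arc of $p^\perp$ through $a$. For $q=-p$, it is locally a half-disk with $a$ on its boundary diameter. So in neither case is $a$ extreme unless $H$ already contains a facet. The extreme points of $P$ are therefore of three kinds: vertices of $-K^\polar$ (a facet of $K$ separates), vertices of $(K')^\polar$ (a facet of $K'$ separates), or edge crossings. The crossings are exactly the case you dismissed.

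To see that Step~2 must fail, take $K$ the standard square and $K'=-R_Z(\pi/4)K$. Then $(K')^\polar=R_Z(\pi/4)(-K^\polar)$, so $P$ is the intersection of a regular square with its rotation by $\pi/4$ about its center. This is an octagon all of whose vertices are edge crossings, since the circumradius exceeds the inradius. The interiors of $K$ and $K'$ are disjoint, yet no facet of either separates them. The theorem survives because (i) holds here, but Step~2 uses neither the failure of (i) nor $N\geq 4$, so it would wrongly produce a separating facet.

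The missing ingredient is a global statement: when the circumcircle cones overlap, the boundary of $P$ cannot consist of crossings only. The paper obtains this by polarity. In polar form the statement is that two congruent $N$-gons that meet, with no vertex of either lying in the other, must have the barycenter of one inside the other. This is proved with the forbidden region, using inner angles at least $\pi/2$ and side length less than $\pi/2$. The inequality $2\tau\geq\varrho$ then upgrades ``barycenter inside'' to ``incircles of the polars meet'', which is the polar form of (i).

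Your Step~3(b) is only a sketch of hoped-for estimates. In any case it concerns a configuration that never occurs at an extreme point.
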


The case of triangles is more involved; a proof of the following theorem can be
found in the upcoming thesis of Spomer~\cite{Spomer2026}.

\begin{theorem}%
  \label{thm:admissible-andreas}
  A regular $N$-gon is admissible if~$N \geq 4$ or if, when~$N = 3$, its polar
  has inner angle at least~$\pi/2$ and side length at least~$\arccos(-1/4)$.
\end{theorem}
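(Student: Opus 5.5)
The case~$N \geq 4$ is exactly Theorem~\ref{thm:admissible-N4}, so what remains is the triangle case. There I would follow the same sentinel strategy as for~$N\ge4$, adapted to triangles under the two hypotheses on the polar.

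One direction of admissibility is trivial for every polygon. If (i) holds, the cones spanned by the circumcircles contain~$K$ and~$K'$ and are interior-disjoint, so $K^\circ\cap(K')^\circ=\emptyset$. If (ii) or (iii) holds, a separating halfspace again gives interior-disjointness. The work is the converse: assume~$K^\circ\cap(K')^\circ=\emptyset$ and that neither (ii) nor (iii) holds, and deduce (i).

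For the converse I would use separation of convex cones. Two interior-disjoint proper cones are separated by a plane through the origin. Following Mascarenhas and Birgin, I would show that this plane can be replaced by a facet plane of~$K$ or~$K'$ unless the two triangles are in a special ``crossing'' position. Concretely, I would parametrize $K'=SK$ by the angular distance~$\delta$ between the barycenters and by the relative twist angles. I would then choose a finite set of \emph{sentinel} points on~$K$: its vertices and edge midpoints, which are points of the incircle. The claim to prove is that if no facet inequality separates the triangles and the circumcircle cones overlap, that is~$\delta<2\varrho$, then some sentinel of one triangle lies in the interior of the other.

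I would organize this by cases according to how many vertices of~$K'$ violate each facet inequality of~$K$, and symmetrically. In each case I would compute angular distances from sentinels to barycenters, using spherical trigonometry of the triangle.

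The main obstacle is the configuration where each triangle pokes one vertex past an edge of the other, with the circumcircle cones overlapping. Here separation can occur only along a plane through a vertex, which is neither a facet nor circumcircle-tangent. I expect the hypotheses on the polar to rule this out. The polar's inner angle is~$\pi$ minus the side length of~$K$, and the polar's side length is~$\pi$ minus the inner angle of~$K$. So the hypotheses say that~$K$ has side length at most~$\pi/2$ and inner angle at most~$\pi-\arccos(-1/4)$, and I would aim to show these bounds prevent a protruding vertex from escaping the other triangle's interior while the circumcircles still overlap.

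My first attempt would reduce the crossing case to a one-parameter family, obtained by sliding one triangle along the common bisector of the two protruding edges. On this family I would verify, by an explicit inequality on~$\cos\delta$, that the extremal position occurs exactly at the threshold side length~$\arccos(-1/4)$ of the polar. For the full details I would defer to Spomer's thesis~\cite{Spomer2026}, as the paper does.
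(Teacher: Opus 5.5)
Your central claim is false, and it fails for the very reason the hypotheses are placed on the polar. The claim is that if no facet inequality separates the triangles and the circumcircle cones overlap, then a vertex or edge midpoint of one triangle lies in the interior of the other. You compute yourself that $K$ has inner angle at most $\pi-\arccos(-1/4)=\arccos(1/4)<\pi/2$, so $K$ is acute.

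Now take a vertex $u$ of $K$ and a vertex $u'$ of $K'$ very close together, with the two vertex wedges crossing like an X. For instance, let $K$ open ``east'' from $u$ and $K'$ open ``north'' from $u'$, with $u'$ slightly south-east of $u$. Each wedge has half-angle less than $\pi/4$. So the interiors meet in a tiny quadrilateral near $u$ that contains neither $u$ nor $u'$, while all other vertices and all edge midpoints of each triangle stay in their own direction sector, outside the other triangle. In this configuration no plane at all separates $K$ and $K'$, and the circumcircle cones overlap, yet no sentinel of either triangle lies in the other. This is exactly the paper's remark that triangles with inner angle below $\pi/2$ admit no finite sentinel set.

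If you instead add interior-disjointness to the hypotheses, the conclusion ``a sentinel lies in the interior'' is just a contradiction, and the sentinels do no work. The whole theorem then rests on the case analysis, the one-parameter sliding reduction, and the asserted appearance of $\arccos(-1/4)$ as an extremal value, none of which is supported.

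The missing idea is polarity. Since $K^\circ\cap(K')^\circ=\emptyset$ if and only if $K^\polar\cap-(K')^\polar\neq\{0\}$, separation of $K$ and $K'$ becomes intersection of two congruent copies of $K^\polar$. Under this correspondence:
\begin{itemize}
  \item a facet inequality of $K$ separating the pair means a vertex of $K^\polar$ lies in $-(K')^\polar$;
  \item the circumcircle cone of $K$ is the polar of the incircle cone of $K^\polar$.
\end{itemize}
So the sentinels must be placed on $K^\polar$, whose inner angle is at least $\pi/2$ (the Mascarenhas--Birgin condition). They are its vertices and barycenter, not edge midpoints. What is needed is the triangle version of Theorem~\ref{thm:main-intersection}.

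The bound $\arccos(-1/4)$ on the polar's side length enters through Lemma~\ref{lem:forbidden-admissible}. Put $x=\cos^2\varrho$ for the polar triangle, so its side length $\delta$ satisfies $\cos\delta=(3x-1)/2$. The chord through the barycenter that joins the two sides at a vertex, perpendicular to the bisector there, has length $2t$ with $\cos 2t=(4x-1)/(2x+1)$. This chord is at least $\delta$ exactly when $x\le 1/6$, that is, when $\cos\delta\le -1/4$.

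Two further steps are then required. First, the last step of the proof of Theorem~\ref{thm:main-intersection}, which ends with the contradiction $\delta>\pi/2$, must be redone, because triangle sides can exceed $\pi/2$; this is the part deferred to Spomer's thesis. Second, the barycenter condition is converted into the circumcircle condition via $2\tau\ge\varrho$. The paper already checks this for all $\alpha\in[0,2\pi/3]$, so it covers triangles too.
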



\subsection{Polarity, separation, and intersection}

The \defi{polar} of a convex cone $K \subseteq \R^n$ is the set
\[
  K^\polar = \{\, y \in \R^n : x^\tp y \leq 0\text{ for all~$x \in K$}\,\}.
\]
The cone~$K$ is \defi{polyhedral} if~$K$ is a polyhedron or, equivalently,
if~$K$ is the conic hull of finitely many vectors.  For a proper polyhedral
cone~$K$, the extreme rays of~$K^\polar$ give the facet-inducing inequalities
of~$K$ and vice versa.

Let~$K$, $K' \subseteq \R^n$ be two proper cones.  The interiors of~$K$
and~$K'$ are open convex sets, and so they are disjoint if and only if they are
separated by a hyperplane, which must pass through the origin.  In other words,
we have~$K^\circ \cap (K')^\circ = \emptyset$ if and only if there is a
nonzero~$a \in \R^n$ such that~$a^\tp x \leq 0$ for all~$x \in K$ and~$a^\tp x
\geq 0$ for all~$x \in K'$.  Using polarity, this translates to:
\[
  K^\circ \cap (K')^\circ = \emptyset\qquad\iff\qquad
  K^\polar \cap -(K')^\polar \neq \{0\}.
\]
So polarity translates separation to intersection and vice versa.  Note moreover
that, if~$K$ is a regular $N$-gon, then~$K^\polar$ is a regular $N$-gon
and~$-K^\polar$ is congruent to~$K^\polar$.  We will prove the following
theorem.

\begin{theorem}%
  \label{thm:main-intersection}
  Let~$K$ and~$K'$ be congruent regular $N$-gons for some~$N \geq 4$. We have~$K
  \cap K' \neq \{0\}$ if and only if at least one of the following holds:
  \begin{enumerate}
    \item[(i)] the barycenter of~$K$ is in~$K'$;

    \item[(ii)] a vertex of~$K$ is in~$K'$;

    \item[(iii)] a vertex of~$K'$ is in~$K$.
  \end{enumerate}
\end{theorem}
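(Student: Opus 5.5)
The direction ``$\Leftarrow$'' is immediate: each of (i)--(iii) exhibits a nonzero point (a barycenter or a vertex) lying in both~$K$ and~$K'$. For ``$\Rightarrow$'' I would work on the sphere with the convex spherical polygons $P = K\cap S^2$ and $P' = K'\cap S^2$. Both are congruent, with angular radius $\varrho<\pi/2$, so they lie in open hemispheres. I would argue by contradiction: assume $P\cap P'\neq\emptyset$, no vertex of either polygon lies in the other, and the barycenter~$c$ of~$K$ is not in~$P'$. Since neither polygon contains a vertex of the other, neither contains the other, so their boundaries cross. Every vertex of the convex polygon $Q=P\cap P'$ is then a crossing of an edge of~$P$ with an edge of~$P'$. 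Moreover, every edge of~$P$ that meets~$P'$ has both endpoints outside~$P'$, so it meets~$P'$ in a chord, and symmetrically for edges of~$P'$.

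Since $c\notin P'$, some facet inequality of~$P'$ separates~$c$ from~$P'$. Hence $Q$ lies in a region of~$P$ cut off from~$c$ by one great-circle arc~$\ell$. I would then split into cases according to the set of edges of~$P$ that $P'$ meets; the region cut off by~$\ell$ can only contain points of a run of consecutive edges.

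\emph{Case 1: $P'$ meets a single edge~$e$ of~$P$.} The boundary of~$Q$ consists of a segment of~$e$ together with the part of~$\partial P'$ lying inside~$P$. If that part contains no vertex of~$P'$, it is a single arc of one edge of~$P'$. Two great-circle arcs meeting only at their endpoints cannot bound a region unless they span a lune of length~$\pi$, which is impossible inside a hemisphere. Hence $P'$ has a vertex in~$P$, a contradiction.

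\emph{Case 2: $P'$ meets two or more edges of~$P$.} Because no vertex of~$P$ lies in~$P'$, each common vertex~$v$ of consecutive met edges is excluded from~$P'$ by an edge of~$P'$ ``cutting the corner'' at~$v$. The same lune argument as in Case~1 shows that $\partial P'\cap P$ contains no vertex only if every piece of it is a single straight cut. Then $P'$ contains a long stretch of~$P$ on the side of these cuts away from the vertices. I would show that this forces $c\in P'$, using only that $P'$ is congruent to~$P$.

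The main obstacle is Case~2, and this is where $N\ge 4$ must enter; for triangles Theorem~\ref{thm:admissible-andreas} indeed needs extra hypotheses. I would first try a quantitative comparison. An edge of~$P'$ that cuts a corner of~$P$ and separates $c$ from~$P'$ lies at distance at most the inradius~$\tau$ from~$c$. The edge of $P'$ has length equal to the side length of~$P$, its endpoints lie outside~$P$, and the two neighbouring edges of~$P'$ turn by the inner angle, which for $N\ge4$ is at least that of a square. From this I would derive that the next vertex of~$P'$ falls inside~$P$. The comparison is a spherical-trigonometry inequality between $\tau$, the side length, and the inner angle of a regular $N$-gon, and it is monotone in~$N$, so it suffices to check it at $N=4$ uniformly in $\varrho\in(0,\pi/2)$. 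I expect verifying this inequality over the whole range of~$\varrho$ to be the genuinely delicate part, probably via a sentinel-type choice of test points in the spirit of Mascarenhas and Birgin~\cite{MascarenhasB2010}.
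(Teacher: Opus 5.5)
There is a genuine gap. The direction ``$\Leftarrow$'', the reduction to two crossing convex spherical polygons, and Case~1 are fine. Case~2, however, is where the whole theorem lives, and there you only announce what you would show (``this forces $c\in P'$'', ``the next vertex of $P'$ falls inside $P$'') without giving an argument.

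The one concrete ingredient you offer is false. An edge of $P'$ that cuts a corner of $P$ and separates $c$ from $P'$ need not lie within the inradius $\tau$ of $c$: if it cuts the corner close to the vertex, its great circle is at distance close to $\varrho>\tau$ from $c$. The claimed monotonicity in $N$, which would reduce everything to $N=4$, is also unsupported. Finally, a vertex of $P'$ landing in $P$ is not how the contradiction actually arises.

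Three ideas are missing.

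\emph{Chords cut single corners.} Every edge of $P'$ that crosses $P$ has both endpoints outside $P$, so its chord inside $P$ is shorter than the side length $\delta$. Two non-adjacent sides of a regular $N$-gon are at distance at least $\delta$. Hence each such chord joins two adjacent sides $[u_{i-1},u_i]$ and $[u_i,u_{i+1}]$, i.e.\ it cuts off a single corner. In your bookkeeping, cyclically consecutive sides of $P$ met by $P'$ must be adjacent, so the number $m$ of met sides is $1$, $2$, or $N$.

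\emph{The barycenter is not cut off.} You must exclude that such a corner triangle contains the barycenter. The paper does this with the forbidden-region lemma: for $N\ge 5$ the barycenter is not in the triangle $u_{i-1}u_iu_{i+1}$, and for $N=4$ the diagonals have length at least $\delta$. In your setup there is a cheaper route. For $N\ge4$, the open side of $\ell$ containing $c$ contains two adjacent vertices of $P$, hence a whole side, which rules out $m=N$.

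\emph{Interlocking corners.} For $m=2$, the same chord argument applied to $P'$ shows that the two chords lie on adjacent edges $[u'_{-1},u'_0]$ and $[u'_0,u'_1]$ of $P'$. So the corners $u_i$ and $u'_0$ interlock. The paper rules this out metrically, using that both inner angles are at least $\pi/2$. Starting at $x$, it moves inside $K$ first perpendicular to $[u'_0,u'_1]$ and then parallel to it, and shows $\angdst(x,y)\ge\pi/2$. This contradicts $\angdst(x,y)<\delta<\pi/2$, where the last inequality is the fact that for $N\ge 4$ the side length is below $\pi/2$. This step is the heart of the proof and is absent from your proposal.

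The paper packages all of this by choosing $z$ in the interior of $K\cap K'$ and letting $[e,z]$ cross an edge $[u'_0,u'_1]$ of $K'$. It then splits into the cases $e\in T$ and $z\in T$, where $T$ is the corner triangle with vertices $u_i$, $x$, $y$.
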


By polarity, the theorem above implies:

\begin{theorem}%
  \label{thm:main-separation}
  Let~$K$ and~$K'$ be congruent regular $N$-gons for some~$N \geq 4$ and let~$e$
  be the barycenter of~$K$.  We have~$K^\circ \cap (K')^\circ = \emptyset$ if
  and only if at least one of the following holds:
  \begin{enumerate}
    \item[(i)] the inequality~$e^\tp x \leq 0$ separates~$K$ and~$K'$;

    \item[(ii)] a facet-inducing inequality of~$K$ separates~$K$ and~$K'$;

    \item[(iii)] a facet-inducing inequality of~$K'$ separates~$K$ and~$K'$.
  \end{enumerate}
\end{theorem}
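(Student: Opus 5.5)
The plan is to deduce the statement from Theorem~\ref{thm:main-intersection} by polarity. We apply that theorem to the pair of cones $P = K^\polar$ and $Q = -(K')^\polar$, and then translate each of its three conditions back into a statement about $K$ and $K'$.

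\textbf{Step 1: the polars are congruent regular $N$-gons.} Write $K = S K_0$ and $K' = S' K_0$ with $S$, $S' \in \sorto(3)$ and $K_0$ the standard regular $N$-gon of angular radius~$\varrho$. Then $K^\polar = S K_0^\polar$ and $(K')^\polar = S' K_0^\polar$. The polar $K_0^\polar$ is generated by the outward facet normals $a_i$ of $K_0$. These are $N$ unit vectors equally spaced on the circle at angular distance $\pi/2 + \tau$ from $e_3$, where $\tau$ is the inradius. Hence $K_0^\polar$ is a regular $N$-gon with barycenter $-e_3$ and angular radius $\pi/2 - \tau \in (0,\pi/2)$. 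Moreover $-K_0^\polar$ is congruent to $K_0^\polar$: for instance $R_X(\pi)$ maps $-e_3$ to $e_3$, and after a suitable rotation $R_Z$ it sends the generators onto the generators of $-K_0^\polar$. So $P$ and $Q$ are congruent regular $N$-gons with $N \geq 4$, and Theorem~\ref{thm:main-intersection} applies to them. The barycenter of $P$ is $-e$. The vertices of $P$ are the unit outward normals $a$ of the facets of $K$; for each such $a$, the inequality $a^\tp x \leq 0$ is facet-inducing for $K$. The vertices of $Q$ are $-b$, where $b$ runs over the unit outward facet normals of $K'$.

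\textbf{Step 2: translate the conditions.} By the polarity remark preceding the theorem, $K^\circ \cap (K')^\circ = \emptyset$ if and only if $P \cap Q \neq \{0\}$. By Theorem~\ref{thm:main-intersection}, this holds if and only if one of its conditions (i)--(iii) holds for the pair $(P,Q)$. We translate each one, using that a point $y$ lies in a polar $C^\polar$ exactly when $y^\tp x \leq 0$ for all $x \in C$.
\begin{enumerate}
\item[(i)] The condition $-e \in Q = -(K')^\polar$ means $e \in (K')^\polar$, that is, $e^\tp x \leq 0$ on $K'$. Since $\varrho < \pi/2$, every generator of $K$ makes an acute angle with $e$, so $e^\tp x \geq 0$ on $K$. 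Hence $e^\tp x \leq 0$ separates $K$ and $K'$.
\item[(ii)] The condition $a \in Q$ means $a^\tp x \geq 0$ on $K'$. Together with $a^\tp x \leq 0$ on $K$, this says that a facet-inducing inequality of $K$ separates $K$ and $K'$.
\item[(iii)] The condition $-b \in P = K^\polar$ means $b^\tp x \geq 0$ on $K$. Together with $b^\tp x \leq 0$ on $K'$, this says that a facet-inducing inequality of $K'$ separates $K$ and $K'$.
\end{enumerate}
Each translation is reversible, which gives the equivalence. The reverse implication is in any case immediate: a separating hyperplane through $0$ with nonzero normal forces the two interiors to be disjoint.

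\textbf{Main obstacle.} All the geometric content lies in Theorem~\ref{thm:main-intersection}, which we are allowed to assume. The only real care needed here is in Step~1. First, one must check that the polar of a regular $N$-gon is again a regular $N$-gon in the paper's sense, with angular radius in $(0,\pi/2)$ and barycenter $-e$. Second, one must get the signs right when identifying vertices and barycenters of $-(K')^\polar$ with $-b$ and with the negated barycenter of $(K')^\polar$.
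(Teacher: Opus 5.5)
Your proposal is correct and is the paper's own argument: the paper proves this theorem in one line, ``from Theorem~\ref{thm:main-intersection} by polarity.'' That means applying the intersection theorem to $K^\polar$ and $-(K')^\polar$ and using $K^\circ \cap (K')^\circ = \emptyset \iff K^\polar \cap -(K')^\polar \neq \{0\}$. You make explicit the bookkeeping the paper leaves implicit: that the polars are congruent regular $N$-gons with barycenter $-e$ and vertices the outward facet normals, the sign translation of each condition, and the use of $\varrho < \pi/2$ to place $K$ on the side $e^\tp x \geq 0$.
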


\begin{proof}
From Theorem~\ref{thm:main-intersection} by polarity.
\end{proof}

We will see in~\S\ref{sec:circumcircle-condition} that~(i) in
Theorem~\ref{thm:main-separation} is equivalent to~(i) in
Definition~\ref{def:admissible}, thus implying Theorem~\ref{thm:admissible-N4}.

Before giving a proof of Theorem~\ref{thm:main-intersection}, let us discuss the
conditions imposed on the polygon.  In the context of packing problems,
Mascarenhas and Birgin~\cite{MascarenhasB2010} characterized intersection of
polygons on~$\R^2$ by means of sentinels; see also Birgin, Martínez,
Mascarenhas, and Ronconi~\cite{BirginMMR2006} for a direct application to
packing in convex regions.  Here is an overview of their approach.

Let~$P$ be a (not necessarily convex) polygon.  A \defi{sentinel set} for~$P$ is
a subset~$S$ of~$P$ such that for every Euclidean motion~$T$ (a translation plus
a rotation) we have that~$P\cap TP \neq \emptyset$ if and only if~$TP$ contains
a point in~$S$ or~$P$ contains a point in~$TS$.

Not every polygon admits a finite sentinel set.  On the one hand, one can make
triangles intersect close to a vertex in such a way as to avoid any finite
subset of points in either triangle.  On the other hand, a finite sentinel set
for a regular polygon with at least four sides consists of its vertices plus its
barycenter.  More generally, Mascarenhas and Birgin constructed finite sentinel
sets for families of polygons whose inner angles are at least~$\pi/2$.

Theorem~\ref{thm:main-intersection} gives a sentinel set for regular spherical
polygons, in direct analogy to the planar case.  If~$N \geq 4$, then the inner
angle of the $N$-gon is at least~$\pi/2$.  As in the planar case, if~$N = 3$ and
the inner angle is less than~$\pi / 2$, then no finite sentinel set exists.
Contrary to the planar case, spherical triangles can have inner angles~$\geq
\pi/2$.  Theorem~\ref{thm:main-intersection} seems to hold as soon as the inner
angle of the polygon is~$\geq \pi/2$.  However, the proof of
Theorem~\ref{thm:main-intersection} for~$N = 3$ found in Spomer's
thesis~\cite{Spomer2026} uses an extra condition on the side length of the
polygon, as in Theorem~\ref{thm:admissible-andreas}.  It is unclear how this
condition can be removed.  Even so, after a comprehensive search for a
counterexample, it seems that the following conjecture is true (note that
necessity is clear from the discussion above).

\begin{conjecture}%
  \label{conj:admissible}
  A regular spherical polygon is admissible if and only if the inner angle of
  its polar is at least~$\pi/2$.
\end{conjecture}

The proof of Theorem~\ref{thm:main-intersection} relies on ideas from
Mascarenhas and Birgin adapted to spherical geometry.  This makes the argument
significantly more complicated.


\subsection{Proof of Theorem~\ref{thm:main-intersection}}

Given points~$x$, $y \in S^{n-1}$ such that~$\angdst(x, y) \neq \pi$, we denote
by~$[x, y]$ the geodesic between~$x$ and~$y$, that is,
\[
  [x, y] = \{\, (\lambda x + (1 - \lambda) y) / \|\lambda x + (1 - \lambda) y\|
  : \lambda \in [0, 1]\,\}.
\]
We see a regular $N$-gon both as a cone in~$\R^3$ and as a polygon on the
sphere.  We also identify facets of the cone with sides of the spherical
polygon.

Consider a regular $N$-gon~$K$ with vertices~$u_0$, \dots,~$u_{N-1}$ and side
length~$\delta$.  In this section, when dealing with vertices or facets, we
take all indices modulo~$N$.  For~$i = 0$, \dots,~$N - 1$, let
\begin{multline*}
  F_i = \{\, x \in K : \text{there are $u \in [u_{i-1}, u_i]$ and~$v \in
  [u_i, u_{i+1}]$}\\
  \text{with~$\angdst(u, v) < \delta$ such that~$x \in [u, v]$}\,\}.
\end{multline*}
The \defi{forbidden region} of~$K$ is the set~$F_0 \cup \cdots \cup F_{N-1}$.

\begin{lemma}%
  \label{lem:forbidden-triangle}
  Let~$K$ be a regular $N$-gon of side length~$\delta$ and let~$u_0$, $u_1$,
  $u_2$ be three consecutive vertices of~$K$.  If~$x \in [u_0, u_1]$ and~$y \in
  [u_1, u_2]$ are such that~$\angdst(x, y) < \delta$, then the spherical
  triangle with vertices~$u_1$, $x$, and~$y$ is contained in the forbidden
  region of~$K$.
\end{lemma}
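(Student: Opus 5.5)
We need to show every point $z$ of the spherical triangle with vertices $u_1$, $x$, $y$ lies in $F_1$. By definition, this means finding $u \in [u_0,u_1]$ and $v \in [u_1,u_2]$ with $\angdst(u,v)<\delta$ and $z \in [u,v]$.

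The natural plan is to sweep the triangle by geodesics parallel, in the appropriate sense, to $[x,y]$. Write $a=\angdst(u_1,x)$, $b=\angdst(u_1,y)$, and parametrize $u(t)$ on $[u_1,x]$ at distance $ta$ from $u_1$ and $v(t)$ on $[u_1,y]$ at distance $tb$ from $u_1$, for $t\in(0,1]$. Two things need checking.

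First, the segments $[u(t),v(t)]$ for $t\in(0,1]$, together with the point $u_1$, cover the triangle. This is a continuity/connectedness argument. Take any geodesic from $u_1$ through $z$ to a point $w$ of $[x,y]$. As $t$ runs from $0$ to $1$, the segment $[u(t),v(t)]$ moves continuously from the degenerate point $u_1$ to $[x,y]$. It meets the ray $[u_1,w]$ at a point whose distance from $u_1$ varies continuously from $0$ to $\angdst(u_1,w)$, so it passes through $z$. Since $[u_0,u_1]\ni u(t)$ and $[u_1,u_2]\ni v(t)$, the points are admissible endpoints, and the vertex $u_1$ itself can be handled as a limit or directly by small $u,v$.

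Second, and this is the main step, we need $\angdst(u(t),v(t))<\delta$ for all $t\in(0,1]$. By the spherical law of cosines at the vertex $u_1$, with inner angle $\theta$,
\[
\cos d(t)=\cos(ta)\cos(tb)+\sin(ta)\sin(tb)\cos\theta .
\]
I would show $d(t)$ is nondecreasing in $t$, so that $d(t)\le d(1)=\angdst(x,y)<\delta$.

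Monotonicity follows from convexity of spherical distance along geodesics within a hemisphere. The function $t\mapsto \angdst(u(t),v(t))$ is the distance between two constant-speed geodesics emanating from the common point $u_1$, and all distances involved are below $\pi/2$ because $\varrho<\pi/2$ and the polygon lies in an open hemisphere. In that regime, comparison with the Euclidean case shows the distance is increasing in $t$. Explicitly, one can differentiate the formula above: $-\sin d\cdot d'$ equals $-(a\sin(ta)\cos(tb)+b\cos(ta)\sin(tb))(1-\cos\theta)$ plus a term of sign matching $\cos\theta$. A cleaner route is to use the triangle inequality at the midpoint scale, via the law of sines in the small triangle, or simply the formula $\sin(d/2)$ for the isosceles case combined with an angle-monotonicity argument.

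The expected obstacle is making this monotonicity rigorous without tedious computation, especially when $\theta$ is obtuse (as happens for $N\ge4$). If the direct derivative is messy, I would instead argue by contradiction. Suppose some $t<1$ has $d(t)\ge d(1)$. In the geodesic triangle $u(t),v(t),x,y$ inside the convex triangle $u_1xy$, the geodesic $[u(t),v(t)]$ is separated from $u_1$'s side, and convexity of the distance function $\angdst(\cdot,\cdot)$ on the product of a hemisphere then forces $d(t)\le t\,d(1)+(1-t)\,d(0)=t\,d(1)<d(1)$, a contradiction.

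So the key fact to invoke is that in an open hemisphere of radius below $\pi/2$ around $u_1$ the distance function is jointly convex along pairs of geodesics from a common point. This is standard for $\mathrm{CAT}(1)$ balls of radius below $\pi/2$, and it yields $d(t)\le t\,\angdst(x,y)<\delta$ directly.
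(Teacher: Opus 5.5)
Your proposal has a genuine gap in its main step. The covering step is plausible, if heavier than it needs to be. But the inequality you finally rely on, $d(t)\le t\,\angdst(x,y)$, is false on the sphere. Convexity of $t\mapsto\angdst(\gamma(t),\sigma(t))$ along two geodesics is a nonpositive-curvature (CAT(0)) phenomenon, and in curvature $+1$ the inequality goes the other way. With $a=b=s$ your own formula gives $\sin(d(t)/2)=\sin(\theta/2)\sin(ts)$. For $\theta=\pi/2$, $s=1$, $t=1/2$ this yields $d(1/2)\approx 0.692>0.637\approx d(1)/2$. What is convex in a ball of radius $<\pi/2$ is the distance to a \emph{fixed} point, not the distance between two moving points. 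So the contradiction argument built on joint convexity collapses. The other routes you mention (law of sines, the isosceles case) are not carried out, and the derivative route stops at a $\cos\theta$ term that has the wrong sign for obtuse $\theta$, which is exactly the case $N\ge4$. Also, ``all distances are below $\pi/2$ because the polygon lies in an open hemisphere'' is not a valid reason, since two points of an open hemisphere can be almost $\pi$ apart. For $N\ge4$, what you need, $a,b\le\delta<\pi/2$, comes from the side length.

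The monotonicity $d(t)\le d(1)$ that you actually need is true. Your derivative attempt can be repaired by grouping the terms with weights $1\pm\cos\theta$ instead. By product-to-sum,
\[
  \cos d(t)=\tfrac{1+\cos\theta}{2}\cos\bigl(t(a-b)\bigr)+\tfrac{1-\cos\theta}{2}\cos\bigl(t(a+b)\bigr),
\]
and both summands are nonincreasing in $t$ as long as $t(a+b)\le\pi$, whatever the sign of $\cos\theta$. Hence $d(t)\le d(1)=\angdst(x,y)<\delta$ whenever $a+b\le\pi$, in particular for $N\ge4$.

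The paper avoids moving both endpoints altogether. It keeps $y$ fixed and moves $x'$ along $[u_1,x]$. Both $u_1$ and $x$ lie in the spherical cap of radius $\delta$ centered at $y$, and this cap is convex (for $\delta\le\pi/2$), which keeps $\angdst(x',y)<\delta$. The triangle is then exactly the union of the segments $[x',y]$ for $x'\in[u_1,x]$, so your intersection-and-continuity covering argument also becomes unnecessary.
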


\begin{proof}
Both~$x$ and~$u_1$ are in the spherical cap of radius~$\delta$ centered at~$y$,
and hence the complete triangle~$T$ with vertices~$u_1$, $x$, and~$y$ is
contained in this spherical cap.  In particular, the segment~$[u_1, x]$ is
contained in this spherical cap, and so every point in this segment is at
angular distance less than~$\delta$ to~$y$.  Since
\[
  T = \{\, u \in K : \text{there is $x' \in [u_1, x]$ with~$u \in [x', y]$}\,\},
\]
the result then follows.
\end{proof}

Admissibility is related to the forbidden region; in the~$N \geq 4$ case we
have:

\begin{lemma}%
  \label{lem:forbidden-admissible}
  If~$N \geq 4$, then the barycenter of a regular $N$-gon~$K$ does not belong to
  the forbidden region.
\end{lemma}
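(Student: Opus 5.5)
Assume, for contradiction, that the barycenter $e$ of $K$ lies in some $F_i$; by the rotational symmetry $R_Z(2\pi/N)$ of the standard $N$-gon we may take $i=1$. Then there are $u\in[u_0,u_1]$ and $v\in[u_1,u_2]$ with $\angdst(u,v)<\delta$ and $e\in[u,v]$. The plan is to show that any geodesic through $e$ joining two points on the two sides adjacent to $u_1$ has length at least $\delta$, which contradicts $\angdst(u,v)<\delta$.

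First, the reflection symmetry of $K$ through the plane containing $e$ and $u_1$ swaps the two sides $[u_0,u_1]$ and $[u_1,u_2]$. Consider the family of great-circle arcs through $e$ meeting both of these sides. At one extreme, the arc through $e$ and $u_0$ has its other endpoint on $[u_1,u_2]$. Since $N\ge 4$ and the vertices are spaced by $2\pi/N\le\pi/2$ around $e$, the continuation of $[u_0,e]$ beyond $e$ leaves $K$ through the side opposite $u_0$. It hits $[u_1,u_2]$ only when $N=4$, and in that case the endpoint is $u_2$, giving length $\angdst(u_0,u_2)\ge\angdst(u_0,u_1)=\delta$. For $N\ge5$ the continuation exits through a side not adjacent to $u_1$. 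So the admissible arcs through $e$ with endpoints $u\in[u_0,u_1]$ and $v\in[u_1,u_2]$ have directions ranging over an angular window bounded by the directions towards $u_0$ and towards $u_2$.

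Second, I will parametrize such an arc by the angle $\psi$ its direction at $e$ makes with the symmetry axis, the direction from $e$ to the midpoint of a side. The distance from $e$ to a side along a direction at angle $\psi$ from that side's inner normal is given by the right-triangle formula $\tan r(\psi)=\tan\tau/\cos\psi$, where $\tau$ is the inradius. The two endpoints lie on sides whose normals are at angles $\pi/N$ on either side of the direction towards $u_1$. Hence the total length is $L=r(\psi_1)+r(\psi_2)$, where $\psi_1$ and $\psi_2$ are the angles to the respective normals and $\psi_1+\psi_2=\pi-2\pi/N$ (up to orientation). The function $\psi\mapsto\arctan(\tan\tau/\cos\psi)$ is convex on $[0,\pi/2)$. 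So $L$ subject to a fixed sum $\psi_1+\psi_2$ is minimized at the symmetric point $\psi_1=\psi_2=\pi/2-\pi/N$, which is the arc perpendicular to the axis through $u_1$. Its length is $2\arctan(\tan\tau/\sin(\pi/N))$.

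Third, I will compare this with $\delta$. In the right triangle formed by $e$, the midpoint of a side, and a vertex, we have $\tan(\delta/2)=\sin\tau\tan(\pi/N)$. The inequality needed is therefore
\[
\tan\tau/\sin(\pi/N)\ge \sin\tau\tan(\pi/N),
\]
which is equivalent to $\cos\tau\le 1/\sin^2(\pi/N)\cdot\cos(\pi/N)^{-1}\cdot\ldots$. More simply, it reduces to $1\ge\cos\tau\,\sin^2(\pi/N)/\cos(\pi/N)$, that is, $\cos(\pi/N)\ge\cos\tau\sin^2(\pi/N)$. This holds for $N\ge4$ because $\cos(\pi/N)\ge\sin^2(\pi/N)$ there: at $N=4$ both sides equal $1/\sqrt2$ and $1/2$ respectively. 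For $N=3$ it can fail, consistent with the paper's remarks.

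I expect the main obstacle to be the second step. It requires verifying the convexity claim on the relevant range, and ruling out rigorously that the $\psi_i$ leave $[0,\pi/2)$ within the admissible window. This depends on $N\ge4$ as well. If convexity is awkward, I would instead directly minimize $L$ by differentiating in $\psi$, using the symmetry to reduce to showing that the derivative has one sign on half of the window.
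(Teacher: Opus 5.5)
Your argument is correct in substance but takes a different route from the paper, and Step~1 contains a misstatement you should repair. For $N\ge5$ the family of arcs you consider is empty. The directions from $e$ to points of $[u_0,u_1]$ fill an angular interval of width $2\pi/N$ centered at that side's inner normal. The reversed interval meets the corresponding interval for $[u_1,u_2]$ only if $\pi-2\pi/N\le 2\pi/N$, that is, $N\le4$, and for $N=4$ only in the single direction of the diagonal $[u_0,u_2]$. So there is no nondegenerate ``window''. For $N\ge5$, your symmetric minimizer, with $\psi_1=\psi_2=\pi/2-\pi/N>\pi/N$, has its endpoints on the extensions of the sides rather than on the sides.

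This does not break the proof. Steps~2 and~3 only use that each endpoint lies on its side. That gives $\psi_i\le\pi/N<\pi/2$, so the obstacle you anticipate is immediate for every $N\ge3$. It also gives exactly $\psi_1+\psi_2=\pi-2\pi/N$, with no orientation ambiguity. Jensen's inequality then gives $\angdst(u,v)\ge2\arctan(\tan\tau/\sin(\pi/N))$ whether or not the midpoint configuration is realized. Your final comparison is right, though the expression before ``More simply'' is garbled; the clean equivalent is $\cos\tau\le\cos(\pi/N)/\sin^2(\pi/N)$.

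The paper's proof is precisely the emptiness observation. $F_i$ is contained in the spherical triangle with vertices $u_{i-1}$, $u_i$, $u_{i+1}$, which misses $e$ for $N\ge5$. For $N=4$, the point $e$ lies on the edge $[u_{i-1},u_{i+1}]$ of that triangle, so the only candidate chord is that diagonal, whose length is at least $\delta$. This is shorter and uses no trigonometry.

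Your metric route is longer for $N\ge4$, but it gives a quantitative lower bound on chords through $e$ and extends to triangles. For $N=3$ the symmetric chord ($\psi_1=\psi_2=\pi/6\le\pi/3$) actually lies in the family, so your bound is sharp. Hence $e$ avoids the forbidden region if and only if $\cos\tau\le2/3$, which is equivalent to $\delta\ge\arccos(-1/4)$---exactly the side-length condition the paper imposes for $N=3$ without derivation.
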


\begin{proof}
Let~$u_0$, \dots,~$u_{N-1}$ be the vertices of~$K$, let~$\delta$ be its side
length, and~$e$ be its barycenter.

We know that~$F_i$ is contained in the spherical triangle with
vertices~$u_{i-1}$, $u_i$, and~$u_{i+1}$.  For~$N \geq 5$ the barycenter~$e$ is
not in any of these triangles, and so is not in the forbidden region.  For~$N =
4$, we have that~$e$ belongs to the diagonals~$[u_0, u_2]$ and~$[u_1, u_3]$, but
the lengths of these diagonals are at least~$\delta$, and so again~$e$ is not in
the forbidden region.
\end{proof}

For this lemma to hold in the~$N = 3$ case, the side length of the polygon must
be at least~$\arccos(-1/4)$, hence the condition in
Theorem~\ref{thm:admissible-andreas}.

We are ready to prove Theorem~\ref{thm:main-intersection}.

\begin{proof}[Proof of Theorem~\ref{thm:main-intersection}]
Let~$e$ be the barycenter of~$K$.  Let~$u_0$, \dots,~$u_{N-1}$ be the vertices
of~$K$ and~$a_0$, \dots,~$a_{N-1}$ be the facet normals, in such a way
that~$a_i$ is the normal of the hyperplane going through the origin, $u_i$,
and~$u_{i+1}$ and
\[
  K = \{\, x \in \R^3 : a_i^\tp x \leq 0\text{ for $i = 0$, \dots,~$N-1$}\,\}.
\]
Define~$u'_i$ and~$a'_i$ similarly for~$K'$.

By contradiction, assume that~$K \cap K' \neq \{0\}$ but that none of the
conditions (i)--(iii) hold.  So there must be a point~$z \in S^{n-1}$ in the
interior of~$K \cap K'$.  Since~$e \notin K'$, the segment~$[e, z]$ must cross a
side of~$K'$, say~$[u'_0, u'_1]$, at exactly one point~$v$; note that~$v \in K$
as well.

Since~$v \in K$ and since~$u'_0$, $u'_1 \notin K$, the segment~$[u'_0, u'_1]$
must intersect two sides of~$K$, one at a point~$x$ and the other at a
point~$y$, and so the angular distance between these two sides is at
most~$\angdst(x, y) < \delta$, where~$\delta$ is the side length of~$K$
and~$K'$.  Since the angular distance between two disjoint sides of~$K$ is at
least~$\delta$, we see that the two sides intersected by~$[u'_0, u'_1]$ must
share a vertex, and so there is~$i$ such that~$x \in [u_{i-1}, u_i]$ and~$y \in
[u_i, u_{i+1}]$.

Let~$T$ be the spherical triangle with vertices~$u_i$, $x$, and~$y$.  Depending
on which side of the facet~$(a'_0)^\tp x \leq 0$ the vertex~$u_i$ is, we either
have~$e \in T$ or~$z \in T$.

If~$e \in T$, then since~$\angdst(x, y) < \delta$, we see from
Lemma~\ref{lem:forbidden-triangle} that~$e$ belongs to the forbidden region
of~$K$ and with Lemma~\ref{lem:forbidden-admissible} we get a contradiction.

So assume~$z \in T$.  Since~$u_i \notin K'$, there is a facet of~$K'$ that
separates~$u_i$.  Since $z \in T$, this facet intersects the segments~$[x, u_i]$
and~$[y, u_i]$, and so its distance to~$[u'_0, u'_1]$ is less than~$\delta$.
Again, this implies that this facet shares a vertex with~$[u'_0, u'_1]$; assume
without loss of generality that the facet is~$[u'_{-1}, u'_0]$.

Now change the coordinate system so that~$a_0' = (0, -1, 0)$ and~$x = (0, 0,
1)$.  Since the inner angles are at least~$\pi/2$, and since~$[x, u_i]$
intersects~$[u'_{-1}, u'_0]$, the angle at~$x$ between~$[u'_1, x]$ and~$[x,
u_i]$ is at least~$\pi/2$.  This means that we can first move inside of~$K$
from~$x$ perpendicular to~$[u'_0, u'_1]$ until reaching a point~$x'$ at the
boundary of~$K$.  Such a movement is achieved by applying a rotation that fixes
the~$(1, 0, 0)$-axis, that is, there is~$\alpha > 0$ such that~$x' = R_X(\alpha)
x$.

Next, since the inner angle of~$K$ at~$u_i$ is also at least~$\pi/2$, we can
continue moving from~$x'$ parallel to~$[u'_0, u'_1]$ without leaving~$K$ until
we intersect the segment~$[u'_0, u'_1]$.  This intersection will be a point~$y'
\in [x, y]$.

By construction, the plane defined by~$x'$, $y'$, and~$0$ is obtained from the
plane defined by~$u'_0$, $u'_1$, and~$0$ by a rotation~$R_X$.  This implies
that~$y'$ lies at the equator, and hence~$\angdst(x, y') = \pi/2$.  So we
get~$\delta > \angdst(x, y) \geq \pi/2$, a contradiction since the side length
of a regular $N$-gon for~$N \geq 4$ is always less than~$\pi/2$.
\end{proof}

The proof above fails for~$N = 3$ since spherical triangles can have side length
greater than~$\pi/2$.  In this case, the same reasoning can be carried out, with
slight changes, to again get a contradiction, but it is necessary to relate the
inner angle to the polygon's side length, complicating the
argument~\cite{Spomer2026}.


\subsection{The circumcircle condition}%
\label{sec:circumcircle-condition}

The cone spanned by the incircle of the polar of a regular $N$-gon~$K$ is the
polar of the cone spanned by the circumcircle of~$K$ and vice versa.  So
Theorem~\ref{thm:admissible-N4} follows, via polarity, from
Theorem~\ref{thm:main-intersection} once condition~(i) of the latter is replaced
by
\begin{enumerate}
  \item[(${\rm i}'$)] the incircles of~$K$ and~$K'$ intersect.
\end{enumerate}

\begin{proof}[Proof of Theorem~\ref{thm:admissible-N4}]
We prove Theorem~\ref{thm:main-intersection} with~(${\rm i}'$) instead of~(i).

Let~$K$ be the standard regular $N$-gon with angular radius~$\varrho$.
Let~$u_0$, \dots,~$u_{N-1}$ be the vertices of~$K$ and let~$K'$ be a rotation
of~$K$.  To prove Theorem~\ref{thm:main-intersection} with~(${\rm i}'$) instead
of~(i), we show that if the incircles of~$K$ and~$K'$ are disjoint, then the
barycenter of~$K$ is not in~$K'$ and vice versa; the alternative
Theorem~\ref{thm:main-intersection} then follows from the original.

The incircle of~$K$ is centered at its barycenter~$e_3$ and touches the
midpoint~$y = (u_0+u_1)/\|u_0+u_1\|$ of the~$[u_0, u_1]$ side of the polygon.
Let~$\alpha = 2\pi/N$ and let~$\tau$ be the inradius of~$K$.  With~$\mu =
\|u_0+u_1\|$, we have
\[
  \mu^2 = 2 + 2\cos\alpha - 2\cos\alpha\cos^2\varrho + 2\cos^2\varrho
\]
and
\[
  \tau = \arccos e_3^\tp y = \arccos \frac{2\cos\varrho}{\mu}.
\]

Let~$\beta$ be the angular distance between the barycenters of~$K$ and~$K'$.
Assume that the incircles of~$K$ and~$K'$ are disjoint, that is,~$\beta >
2\tau$.  We claim that~$\beta > \varrho$, which implies that the barycenter
of~$K$ is not in~$K'$ and vice versa, as wanted.  To show that~$\beta >
\varrho$, we show that~$2\tau \geq \varrho$, since as~$\beta > 2\tau$ we then
get~$\beta > \varrho$.  Showing that~$2\tau \geq \varrho$ is equivalent to
showing that~$\cos(2\tau) \leq \cos\varrho$.

Since
\[
  \cos(2\tau) = 2\cos^2\tau - 1\qquad\text{and}\qquad
  \cos^2\tau = \frac{4\cos^2\varrho}{\mu^2},
\]
the inequality~$\cos(2\tau) \leq \cos\varrho$ is equivalent to
\[
  0 \leq \cos\varrho - 2\cos^2\tau + 1 =
  \cos\varrho - \frac{8\cos^2\varrho}{\mu^2} + 1.
\]
Multiply both sides by~$\mu^2$ to get the equivalent inequality
\begin{equation}%
  \label{eq:goal-ineq}
  \mu^2\cos\varrho - 8\cos^2\varrho + \mu^2 \geq 0.
\end{equation}
With~$u = \cos\alpha$ and~$v = \cos\varrho$, the left side above is a polynomial
on~$u$ and~$v$, namely
\[
  p(u, v) = -2uv^3 + 2v^3 - 2uv^2 - 6v^2 + 2uv + 2v + 2u + 2.
\]

Inequality~\eqref{eq:goal-ineq} holds for all~$\alpha \in [0, 2\pi/3]$
and~$\varrho \in [0, \pi/2]$ if and only if~$p(u, v) \geq 0$ for all~$u \in
[-1/2, 1]$ and~$v \in [0, 1]$.  Since the only roots of
\[
  \frac{\partial p}{\partial u} = -2v^3 - 2v^2 + 2v + 2
\]
are~$\pm 1$, the polynomial~$p$ does not have a critical point, and hence a
local minimum, in the interior of the domain~$[-1/2, 1] \times [0, 1]$.  It then
remains to show that~$p$ is nonnegative on the boundary of the domain as well.

We easily check that~$p(u, 1) = 0$ and~$p(u, 0) = 2u + 2$, which are nonnegative
in the domain.  Likewise,~$p(-1/2, v) = 3v^3 - 5v^2 + v + 1$ and~$p(1, v) =
-8v^2 + 4v + 4$, and computing the roots of these polynomials we check that they
are also nonnegative in the domain, finishing the proof.
\end{proof}


\section{Lower bounds via manifold optimization}%
\label{sec:manifold-opt}

Let~$K_\varrho$ be the standard regular $N$-gon with angular radius~$\varrho$.
Given an integer~$n \geq 1$, our goal is to maximize~$\varrho$ such that there
is a packing of~$n$ copies of~$K_\varrho$.  Finding a packing of~$n$ polygons is
the same as finding~$n$ appropriate matrices in~$\sorto(3)$ and, by rotation
invariance, we can assume that one of the matrices is the identity.  This leads
us to the optimization problem
\begin{equation}%
  \label{opt:manifold-first}
  \begin{optprob}
    \text{maximize}&\varrho\\
    &K_\varrho^\circ \cap S_i K_\varrho^\circ = \emptyset&\text{for $1 \leq i
    \leq n-1$,}\\
    &S_i K_\varrho^\circ \cap S_j K_\varrho^\circ = \emptyset&\text{for $1 \leq
    i < j \leq n-1$,}\\
    &\onerow{S_1, \ldots, S_{n-1} \in \sorto(3),\ \varrho \in (0, \pi/2).}
  \end{optprob}
\end{equation}

This is an optimization problem on the smooth Riemannian manifold~$(0, \pi/2)
\times \sorto(3)^{n-1}$, and good solutions for it can be found via methods of
manifold optimization.  For this, the constraints have to be rephrased, and so
we assume that~$K$ is admissible (Definition~\ref{def:admissible}).

Let~$u_0(\varrho)$, \dots,~$u_{N-1}(\varrho)$ be the vertices
and~$a_0(\varrho)^\tp x \leq 0$, \dots,~$a_{N-1}(\varrho)^\tp x \leq 0$ be the
facet-inducing inequalities of~$K_\varrho$.  For~$S \in \sorto(3)$, the vertices
of~$SK_\varrho$ are~$S u_i(\varrho)$ and the facet-inducing inequalities
are~$(Sa_i(\varrho))^\tp x \leq 0$.

Consider the functions~$p$, $q_{ij}\colon (0, \pi/2) \times \sorto(3) \to \R$
such that
\begin{align*}
  p(\varrho, S) &= \cos(2\varrho) - S_{33}\quad\text{and}\\
  q_{ij}(\varrho, S) &= a_i(\varrho)^\tp S u_j(\varrho).
\end{align*}
It follows from the admissibility of~$K$ that, for~$\varrho \in (0, \pi/2)$
and~$S$, $T \in \sorto(3)$, we have~$SK_\varrho^\circ \cap TK_\varrho^\circ =
\emptyset$ if and only if~$(S, T)$ is in one of the sets
\begin{align*}
  \Sigma_1(\varrho) &= \{\, (S, T) \in \sorto(3)^2 : p(\varrho, S^\tp T) \geq 0\,\},\\
  \Sigma_2(\varrho) &= \bigcup_{i=0}^{N-1} \bigcap_{j=0}^{N-1} \{\, (S, T) \in
  \sorto(3)^2 : q_{ij}(\varrho, S^\tp T) \geq 0\,\},\quad\text{or}\\
  \Sigma_3(\varrho) &= \bigcup_{i=0}^{N-1} \bigcap_{j=0}^{N-1} \{\, (S, T) \in
  \sorto(3)^2 : q_{ij}(\varrho, T^\tp S) \geq 0\,\}.
\end{align*}
With this,~\eqref{opt:manifold-first} becomes
\begin{equation}%
  \label{opt:manifold-second}
  \begin{optprob}
    \text{maximize}&\varrho\\
    &(I, S_i) \in \Sigma_1(\varrho) \cup \Sigma_2(\varrho) \cup
    \Sigma_3(\varrho)&\text{for~$1 \leq i \leq n-1$,}\\
    &(S_i, S_j) \in \Sigma_1(\varrho) \cup \Sigma_2(\varrho) \cup
    \Sigma_3(\varrho)&\text{for~$1 \leq i < j \leq n-1$,}\\
    &\onerow{S_1, \ldots, S_{n-1} \in \sorto(3),\ \varrho \in (0, \pi/2).}
  \end{optprob}
\end{equation}

The augmented Lagrangian method (see, e.g., Birgin and
Martínez~\cite{BirginM2014}) is an algorithm for constrained optimization
problems like
\begin{equation}
  \label{opt:manifold-standard}
  \begin{optprob}
    \text{minimize}&f(x)\\
    &g_i(x) \leq 0&\text{for $i = 1$, \dots,~$m$,}\\
    &x \in \Mcal,
  \end{optprob}
\end{equation}
where~$\Mcal = \R^n$ and the functions~$f$, $g_i\colon\Mcal \to \R$ are twice
continuously differentiable.  This method has been generalized by Liu and
Boumal~\cite{LiuB2020} to the case when~$\Mcal$ is a Riemannian manifold.

Given a penalty parameter~$\tau > 0$ and multipliers~$\lambda \in \R^m$,
$\lambda \geq 0$, we consider the augmented Lagrangian function
\[
  L_\tau(x, \lambda) = f(x) + (\tau/2) \sum_{i=1}^m \max\{0, \lambda_i/\tau +
  g_i(x)\}^2.
\]
For fixed~$\tau$ and~$\lambda$, a first-order stationary point of~$x \mapsto
L_\tau(x, \lambda)$ over~$\Mcal$ can be computed using standard techniques from
smooth Riemannian optimization, like gradient descent or Newton-type methods;
the book by Boumal~\cite{Boumal2023} gives an introduction to optimization on
manifolds.

After a stationary point is found, the penalty parameter and Lagrangian
multipliers are updated based on the constraint violations.  Repeating this
procedure leads to an approximate stationary point of~$f$ on the manifold
satisfying the given constraints; for convergence results, see Liu and
Boumal~\cite{LiuB2020}.

Problem~\eqref{opt:manifold-second} is not quite in the format
of~\eqref{opt:manifold-standard}, however, since it involves constraints of the
form
\[
  g_1(x) \leq 0\quad\text{or}\quad g_2(x) \leq 0.
\]
Such a constraint can be modeled by requiring that~$g(x) = \min\{g_1(x),
g_2(x)\} \leq 0$.  The function~$g$ is in general not differentiable, though
from Rademacher's theorem we know it is differentiable almost everywhere.  In
practice, the nondifferentiability of~$g$ does not cause any problems for the
algorithm.

After finding an approximate stationary point~$(\varrho, S_1, \ldots, S_{n-1})$,
we can round~$\varrho$ and the entries of the matrices~$S_i$ to rational numbers
and then check that the resulting configuration is indeed a packing.  To round
the matrices, we use the Cayley transform~$\cayley(S) = (I - S)(I+S)^{-1}$, a
self-inverse bijection between rotation matrices without~$-1$ eigenvalues and
skew-symmetric matrices.  To round the matrix~$S$, we approximate~$\cayley(S)$
by a rational matrix~$A$; the rational approximation of~$S$ is then~$\cayley((A
- A^\tp)/2) \in \sorto(3)$.

A few optimal packings of spherical caps are known, and the manifold
optimization method described above can be used to approximate these optimal
packings quite closely (see Table~\ref{tab:caps} and the discussion
in~\S\ref{sec:results}).  These benchmark results indicate that the manifold
optimization method, though a numerical method, is able to find near-optimal
packings.

We used this method to compute several packings of spherical polygons.  All
results are listed in Table~\ref{tab:bounds} and discussed
in~\S\ref{sec:results}.  Some of the configurations computed are depicted in
Figures~\ref{fig:maximal-packings} and~\ref{fig:large-packings}.


\section{The Lovász theta number}%
\label{sec:theta}

The Lovász theta number~\cite{Lovasz1979} is an upper bound for the independence
number of a graph in terms of semidefinite programming.  It is the framework
behind several bounds in discrete geometry, like the linear programming bound of
Delsarte, Goethals, and Seidel~\cite{DelsarteGS1977} for spherical codes, the
linear programming bound of Cohn and Elkies~\cite{CohnE2003} for sphere-packing
density, and the lower bound for the measurable chromatic number of the
Euclidean space of Oliveira and Vallentin~\cite{OliveiraV2010}, to cite but a
few applications.  It is also the framework we will use to bound the packing
number of a proper cone.

Let~$G = (V, E)$ be a graph.  A subset of~$V$ is \defi{independent} if it does
not contain an edge.  The \defi{independence number} of~$G$, denoted
by~$\alpha(G)$, is the maximum cardinality of an independent set of~$G$.

Given a proper cone~$K \subseteq \R^n$, consider the graph~$G(K)$ whose vertex
set is~$\sorto(n)$ and in which
\[
  \text{$S$, $T \in \sorto(n)$ are adjacent}\qquad\text{if}\qquad S K^\circ
  \cap T K^\circ \neq \emptyset.
\]
Note that~$G(K)$ is the Cayley graph with connection set
\[
  \{\, S \in \sorto(n) : K^\circ \cap SK^\circ \neq \emptyset\,\}.
\]

If~$I\subseteq\sorto(n)$ is an independent set of~$G(K)$, then~$\{\, SK : S \in
I\,\}$ is a packing of~$K$.  Conversely, a packing of~$K$ corresponds to an
independent set of~$G(K)$, and so~$\tau(K) = \alpha(G(K))$.

The upper bound for the independence number of~$G(K)$ is given by the following
optimization problem, whose optimal value is denoted by~$\vartheta'(G(K))$:
\begin{equation}%
  \label{opt:theta-kernel}
  \begin{optprob}
    \text{minimize}&\lambda\\
    &A(S, S) \leq \lambda - 1&\text{for all~$S \in \sorto(n)$,}\\
    &A(S, T) \leq -1&\text{if~$SK^\circ \cap TK^\circ = \emptyset$,}\\
    &\onerow{\text{$A\colon \sorto(n)^2 \to \R$ is continuous and positive semidefinite.}}
  \end{optprob}
\end{equation}

Here, the optimization variable is the continuous kernel~$A$.  The kernel~$A$ is
\defi{positive semidefinite} if for every finite set~$\Ucal \subseteq \sorto(n)$
the matrix~$\bigl(A(S, T)\bigr)_{S, T \in \Ucal}$ is positive semidefinite, that
is, if every finite principal submatrix of~$A$ is positive semidefinite.  This
implies in particular that~$A$ is symmetric.  The notation~$\vartheta'$ for the
optimal value of~\eqref{opt:theta-kernel} is appropriate: this is an extension
of the theta prime number of McEliece, Rodemich, and
Rumsey~\cite{McElieceRR1978} and Schrijver~\cite{Schrijver1979}, itself a
strengthening of the Lovász theta number, to the infinite graph~$G(K)$.

\begin{theorem}%
  \label{thm:theta-bound}
  If~$(\lambda, A)$ is a feasible solution of~\eqref{opt:theta-kernel},
  then~$\tau(K) \leq \lambda$.  Moreover,~$\vartheta'(G(K))$ is at most the
  volume bound for~$K$.
\end{theorem}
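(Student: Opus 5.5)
The statement has two parts, and I will prove them separately.

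\emph{The inequality $\tau(K) \leq \lambda$.} This is the standard theta-prime argument. I take a packing of~$K$ of size~$m$ and let $I = \{S_1, \ldots, S_m\} \subseteq \sorto(n)$ be the corresponding independent set of~$G(K)$. The elements are distinct, since congruent copies in a packing are interior-disjoint and $K$ has nonempty interior. Positive semidefiniteness of~$A$ gives that the principal submatrix on~$I$ is positive semidefinite, so
\[
  0 \leq \sum_{i,j=1}^m A(S_i, S_j) \leq m(\lambda - 1) - m(m-1).
\]
Here the diagonal terms are bounded by $\lambda - 1$, and for $i \neq j$ we have $S_iK^\circ \cap S_jK^\circ = \emptyset$, so each off-diagonal term is at most~$-1$. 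This yields $m \leq \lambda$. Taking the maximum over packings gives $\tau(K) \leq \lambda$.

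\emph{The comparison with the volume bound.} I will exhibit, for every $\varepsilon > 0$, a feasible kernel with $\lambda$ at most the volume bound plus~$\varepsilon$; continuity forces the $\varepsilon$. Let $X = K \cap S^{n-1}$ and let $\mu = \omega(X)/\omega(S^{n-1})$, so the volume bound is $1/\mu$. The natural candidate is
\[
  A(S,T) = c\int_{S^{n-1}} \indf{SX}(x)\indf{TX}(x)\,d\omega(x) - 1,
\]
that is, $c\,\omega(SX\cap TX) - 1$. The integral part is a Gram kernel in $L^2(S^{n-1})$, hence positive semidefinite. The constant $-1$, however, is negative semidefinite, so I will need a different arrangement.

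To fix this, I use functions $f_S = \indf{SX} - \mu$, orthogonal to the constants, and take $A(S,T) = c\langle f_S, f_T\rangle$. Then $A(S,T) = c(\omega(SX\cap TX) - 2\mu\,\omega(X) + \mu^2\omega(S^{n-1}))$, which with the normalization $\omega(S^{n-1}) = 1$ equals $c(\omega(SX\cap TX) - \mu^2)$.

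When the interiors are disjoint, the intersection has measure zero, since it lies in a great sphere. Hence $A = -c\mu^2 \leq -1$ if I choose $c = 1/\mu^2$. On the diagonal, $A(S,S) = c(\mu - \mu^2) = 1/\mu - 1$, so $\lambda = 1/\mu$, which is exactly the volume bound.

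Continuity of $(S,T)\mapsto \omega(SX\cap TX)$ follows because the boundary of~$X$ has measure zero, via dominated convergence. The kernel is PSD as a Gram kernel. So no $\varepsilon$ is actually needed.

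The main point to check is that interior-disjointness really gives a null intersection. It does: $SK\cap TK$ is contained in a separating hyperplane through~$0$, and so meets $S^{n-1}$ in a set of measure zero.
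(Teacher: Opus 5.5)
Your proof is correct, and the first part is the same argument the paper uses. For the second part you arrive at exactly the paper's kernel. The paper lifts to $\sorto(n)$, takes $\Xcal = \{S : Se \in K^\circ\}$, and uses $\mu(\Xcal)^{-2}A' - J$ with $A'(S,T)$ the Haar measure of $S\Xcal \cap T\Xcal$. Pushing Haar measure forward along $S \mapsto Se$ turns this into your $\omega(SX\cap TX)/\mu^2 - 1$. This is also your discarded ``natural candidate'' with $c = 1/\mu^2$: the constant $-1$ did not rule it out, it only meant that positivity needed an argument.

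The genuine difference is how positive semidefiniteness is verified. The paper notes that $\one$ is an eigenfunction of both $A'$ and $J$ and then appeals to the spectral theorem. You instead center the indicators, $f_S = \indf{SX} - \mu$, which exhibits $A$ directly as a Gram kernel in $L^2(S^{n-1})$. This is just the paper's projection onto the orthogonal complement of $\one$ written out explicitly, but it is more elementary. Positivity of every finite principal submatrix, the notion used in~\eqref{opt:theta-kernel}, is immediate. You never need to pass from positivity of an integral operator to that finite-submatrix notion.

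The paper's formulation keeps everything on the vertex set $\sorto(n)$ of $G(K)$. That matches the Cayley-graph setting in which the bound is stated and the clique viewpoint of~\S\ref{sec:volume-bound}.

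Your opening remark that an $\varepsilon$ might be needed is unnecessary, as you yourself conclude.
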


\begin{proof}
Let~$I \subseteq \sorto(n)$ be a nonempty independent set of~$G(K)$ and
let~$(\lambda, A)$ be a feasible solution of~\eqref{opt:theta-kernel}.  For
distinct~$S$, $T \in I$ we have~$SK^\circ \cap TK^\circ = \emptyset$,
hence~$A(S, T) \leq -1$.  It follows that
\[
  0 \leq \sum_{S, T \in I} A(S, T) \leq (\lambda - 1) |I| - (|I|^2 - |I|),
\]
hence~$|I| \leq \lambda$, proving the first part of the statement.

For the comparison with the volume bound, let~$\mu$ denote the Haar probability
measure on~$\sorto(n)$.  Fix a point~$e \in S^{n-1}$ and define the set~$\Xcal =
\{\, S \in \sorto(n) : Se \in K^\circ\,\}$. The surface measure on~$S^{n-1}$ is
the pushforward of the Haar measure through the map $S \mapsto Se$, and so
\[
  \mu(\Xcal) = \frac{\omega(K \cap S^{n-1})}{\omega(S^{n-1})}.
\]
In particular, the volume bound is~$1 / \mu(\Xcal)$.

Set
\[
  A'(S, T) = \mu(S\Xcal \cap T\Xcal) = \int_{\sorto(n)} \indf{S\Xcal}(R)
  \indf{T\Xcal}(R)\, d\mu(R),
\]
where~$\one_\Xcal$ is the indicator function of~$\Xcal$.

A direct proof that~$A'$ is continuous is not hard; continuity also follows by
seeing~$A'(S, T)$ as the result of a convolution of
functions~\cite[Proposition~2.1]{Folland1995}.  By construction,~$A'$ is
positive semidefinite, since a principal submatrix of~$A'$ indexed by a finite
set~$\Ucal \subseteq \sorto(n)$ is given above as the Gram matrix of the
vectors~$\indf{S\Xcal}$ for~$S \in \Ucal$ with respect to the standard inner
product on~$L^2(\sorto(n))$.  We also have~$A'(S, T) = 0$ whenever~$SK^\circ
\cap TK^\circ = \emptyset$, since if the latter condition holds then~$S\Xcal
\cap T\Xcal = \emptyset$.

Now set~$A = \mu(\Xcal)^{-2} A' - J$, where~$J$ is the constant-one kernel. Note
that~$A$ is continuous and that~$A(S, T) = -1$ if~$SK^\circ \cap TK^\circ =
\emptyset$. We claim that~$A$ is positive semidefinite.  Indeed, the
constant-one function~$\one$ is an eigenfunction of~$J$ with eigenvalue~$1$.  It
is also an eigenfunction of~$A'$ with eigenvalue~$\mu(\Xcal)^2$, since for
every~$S \in \sorto(n)$ we have
\[
  \begin{split}
    \int_{\sorto(n)} A'(S, T)\, d\mu(T) &= \int_{\sorto(n)}\int_{\sorto(n)}
    \indf{S\Xcal}(R) \indf{T\Xcal}(R)\, d\mu(R) d\mu(T)\\
    &=\int_{\sorto(n)}\int_{\sorto(n)} \indf{\Xcal}(S^{-1} R)
    \indf{\Xcal}(T^{-1} R)\, d\mu(R) d\mu(T)\\
    &=\int_{\sorto(n)} \indf{\Xcal}(S^{-1} R) \int_{\sorto(n)}
    \indf{\Xcal}(T^{-1} R)\, d\mu(T) d\mu(R)\\
    &=\int_{\sorto(n)} \indf{\Xcal}(S^{-1} R) \mu(\Xcal)\, d\mu(R)\\
    &=\mu(\Xcal)^2.
  \end{split}
\]
Since~$\one$ is, up to scaling, the only eigenfunction of~$J$ with nonzero
eigenvalue, it follows from the spectral theorem that~$A$ is positive
semidefinite.

Finally, for~$S \in \sorto(n)$ we have
\[
  A(S, S) = \mu(\Xcal)^{-2} \mu(\Xcal) - 1 = \mu(\Xcal)^{-1} - 1
\]
and so, by taking~$\lambda = \mu(\Xcal)^{-1}$, it follows that~$(\lambda, A)$
is a feasible solution of~\eqref{opt:theta-kernel}, hence~$\vartheta'(G(K))
\leq \mu(\Xcal)^{-1}$, as we wanted.
\end{proof}

The volume bound as defined above is a special case of a more general
construction explored in~\S\ref{sec:volume-bound}.  It turns out that the theta
number gives a better bound than any of these generalized volume bounds.

Any~$R \in \sorto(n)$ gives an automorphism of the graph~$G(K)$, since~$S$, $T
\in \sorto(n)$ are adjacent if and only if~$RS$, $RT$ are.  These symmetries
lead to a first simplification of~\eqref{opt:theta-kernel}.

A matrix~$R \in \sorto(n)$ acts on a kernel~$A\colon \sorto(n)^2 \to \R$ by
\[
  (R\cdot A)(S, T) = A(R^{-1} S, R^{-1} T)\qquad\text{for~$S$, $T \in
  \sorto(n)$.}
\]
The kernel~$A$ is \defi{invariant} if~$R\cdot A = A$ for all~$R \in \sorto(n)$.
If in~\eqref{opt:theta-kernel} we restrict ourselves to invariant kernels, then
we still get an upper bound for~$\tau(K)$.  Moreover, such a restriction does
not make the bound worse.  Indeed, if~$(\lambda, A)$ is a feasible solution
of~\eqref{opt:theta-kernel}, then~$(\lambda, \overline{A})$ with
\[
  \overline{A}(S, T) = \int_{\sorto(n)} A(RS, RT)\, d\mu(R),
\]
where~$\mu$ is the Haar probability measure on~$\sorto(n)$, is also a feasible
solution and~$\overline{A}$ is invariant.

If~$A$ is an invariant kernel, then~$A(S, T) = A(T^{-1}S, I)$ for all~$S$, $T
\in \sorto(n)$, that is, the value of~$A(S, T)$ depends only on~$T^{-1}S$.  So
there is a continuous function~$f\colon \sorto(n) \to \R$ such that~$A(S, T) =
f(T^{-1}S)$.  Conversely, such a continuous function~$f$ defines a continuous
invariant kernel~$A$ by the same identity.

A continuous function~$f\colon \sorto(n) \to \R$ is of \defi{positive type}
if~$f(S^{-1}) = f(S)$ for all~$S \in \sorto(n)$ and the kernel~$(S, T) \mapsto
f(T^{-1}S)$ is positive semidefinite.  We can rewrite~\eqref{opt:theta-kernel}
equivalently as
\begin{equation}%
  \label{opt:theta-func}
  \begin{optprob}
    \text{minimize}&1 + f(I)\\
    &f(S) \leq -1&\text{if $K^\circ \cap SK^\circ = \emptyset$,}\\
    &\onerow{\text{$f\colon \sorto(n) \to \R$ is continuous and of positive
    type.}}
  \end{optprob}
\end{equation}
This problem is closer in shape to the linear programming bounds of Delsarte,
Goethals, and Seidel~\cite{DelsarteGS1977} and Cohn and Elkies~\cite{CohnE2003}.


\subsection{Upper bounds from spherical codes}%
\label{sec:spherical-codes}

Let~$K \subseteq \R^n$ be a proper cone and let~$X$ be the spherical cap of
largest radius contained in~$K \cap S^{n-1}$; write~$C = \Rplus X$.
Then~$\tau(K) \leq \tau(C)$, and any upper bound for~$\tau(C)$ is also an upper
bound for~$\tau(K)$.

Note that~$G(C)$ is a subgraph of~$G(K)$.  Any feasible solution
of~\eqref{opt:theta-kernel} for~$G(C)$ is also a feasible solution
of~\eqref{opt:theta-kernel} for~$G(K)$, hence~$\vartheta'(G(C)) \geq
\vartheta'(G(K))$.  However, computing~$\vartheta'(G(C))$ can be significantly
simpler than computing~$\vartheta'(G(K))$, since independent sets of~$G(C)$
correspond to spherical codes.

Indeed, let~$\phi$ be the angular radius of the spherical cap~$X$.  Independent
sets of~$G(C)$ correspond to packings of spherical caps of angular radius~$\phi$
or, in other words, to spherical codes with minimum angular distance~$2\phi$:
sets of points on~$S^{n-1}$ any two of which are at distance at least~$2\phi$
apart.  The linear programming bound of Delsarte, Goethals, and
Seidel~\cite{DelsarteGS1977} gives an upper bound for the cardinality of such a
spherical code.  Bachoc, Nebe, Oliveira, and Vallentin~\cite{BachocNOV2009}
reinterpreted the linear programming bound as the theta prime number of the
graph whose vertex set is the sphere and in which~$x$, $y \in S^{n-1}$ are
adjacent if~$\angdst(x, y) \in (0, 2\phi)$.  Namely, they showed that the linear
programming bound is the optimal value of the problem
\begin{equation}%
  \label{opt:lp-sphere}
  \begin{optprob}
    \text{minimize}&\lambda\\
    &A(x, x) \leq \lambda - 1&\text{for all~$x \in S^{n-1}$,}\\
    &A(x, y) \leq -1&\text{if $\angdst(x, y) \geq 2\phi$,}\\
    &\onerow{\text{$A\colon (S^{n-1})^2 \to \R$ is continuous and positive
    semidefinite.}}
  \end{optprob}
\end{equation}

\begin{theorem}%
  \label{thm:spherical-codes-cmp}
  The optimal value of~\eqref{opt:lp-sphere} equals~$\vartheta'(G(C))$.
\end{theorem}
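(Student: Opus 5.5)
The proof consists of two inequalities between the optimal value of~\eqref{opt:lp-sphere} and~$\vartheta'(G(C))$, obtained by transporting feasible kernels between~$S^{n-1}$ and~$\sorto(n)$. Fix the center~$e \in S^{n-1}$ of the cap~$X$, and let~$\pi\colon \sorto(n) \to S^{n-1}$ be the map~$\pi(S) = Se$. Since~$C$ is the cone over the cap of radius~$\phi$ centered at~$e$, the cone~$SC$ is the cone over the cap of radius~$\phi$ centered at~$Se$. Two such caps have disjoint interiors exactly when~$\angdst(Se, Te) \geq 2\phi$. Hence~$SC^\circ \cap TC^\circ = \emptyset$ if and only if~$\angdst(\pi(S), \pi(T)) \geq 2\phi$. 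This is the only geometric input needed.

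First, I show that~$\vartheta'(G(C))$ is at most the optimal value of~\eqref{opt:lp-sphere}. Given a feasible~$(\lambda, A)$ for~\eqref{opt:lp-sphere}, define~$B(S, T) = A(\pi(S), \pi(T))$. This kernel is continuous, because~$\pi$ is continuous. It is positive semidefinite, since any finite principal submatrix of~$B$ is a principal submatrix of~$A$ on the multiset~$\pi(\Ucal)$, possibly with repeated rows and columns, and repeating rows and columns preserves positive semidefiniteness. The diagonal constraint transfers directly. By the equivalence above, the edge constraint~$A(x, y) \leq -1$ for~$\angdst(x,y) \geq 2\phi$ gives~$B(S,T) \leq -1$ whenever~$SC^\circ \cap TC^\circ = \emptyset$. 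So~$(\lambda, B)$ is feasible for~\eqref{opt:theta-kernel}.

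Second, I show the reverse inequality by averaging over the stabilizer~$H = \{\, R \in \sorto(n) : Re = e\,\}$, with Haar probability measure~$\nu$. Given a feasible~$(\lambda, B)$ for~\eqref{opt:theta-kernel} for~$G(C)$, define
\[
  A(Se, Te) = \int_H\int_H B(SR, TR')\, d\nu(R)\, d\nu(R').
\]
The right-hand side is unchanged when~$S$ is replaced by~$SR_0$ with~$R_0 \in H$, so~$A$ is well defined on~$(S^{n-1})^2$.

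The remaining checks for this kernel are as follows. Positive semidefiniteness holds because~$\sum_{x,y} c_x c_y A(x,y)$ is an average of quadratic forms of~$B$; it can be written as a limit of Riemann sums, or as the integral of~$B$ against a measure of the form~$\sum c_x \delta_{S_x} * \nu$ paired with itself, which is nonnegative for a continuous positive semidefinite kernel. Continuity follows from the uniform continuity of~$B$ on the compact group together with a continuous local section of~$\pi$; alternatively, the~$H$-average can be viewed as a continuous function on the quotient~$\sorto(n)/H \cong S^{n-1}$. For the diagonal, one might worry that~$A(x,x)$ averages~$B(SR, SR')$ over distinct~$R, R'$. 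However, for~$R, R' \in H$ the cones~$SRC = SC$ and~$SR'C$ coincide, so~$(SR, SR')$ is not a disjoint pair. Thus only diagonal values are clearly bounded, and the off-diagonal values~$B(SR,SR')$ are not controlled. The fix is to apply Cauchy--Schwarz in the PSD kernel,~$|B(P,Q)| \leq \sqrt{B(P,P)B(Q,Q)} \leq \lambda - 1$, which yields~$A(x,x) \leq \lambda - 1$. For the edge constraint, if~$\angdst(Se, Te) \geq 2\phi$, then every pair~$(SR, TR')$ has disjoint cone interiors, so each integrand is at most~$-1$ and~$A \leq -1$.

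The step I expect to need the most care is this averaging argument in the second direction: checking well-definedness, continuity, and especially the diagonal bound via Cauchy--Schwarz. The alternative is to first replace~$B$ by its~$\sorto(n)$-invariant average, so that~$B(S,T) = f(T^{-1}S)$, which simplifies the continuity check. Combining the two directions gives equality of the optimal values.
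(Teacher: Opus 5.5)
Your proposal is correct and follows the paper's proof. For one inequality you pull back a sphere kernel via $S \mapsto Se$. For the other you average over the stabilizer of $e$ with the double integral $\int_H\int_H B(SR, TR')\,d\nu(R)\,d\nu(R')$ and descend to $S^{n-1}$. The only difference is the diagonal bound: the paper first replaces the kernel by its $\sorto(n)$-invariant average to get a constant diagonal and then uses the $2\times 2$ determinant, whereas your Cauchy--Schwarz estimate $|B(P,Q)| \le \sqrt{B(P,P)B(Q,Q)} \le \lambda - 1$ (valid since $0 \le B(P,P) \le \lambda - 1$) makes that preliminary symmetrization unnecessary.
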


\begin{proof}
Fix~$e \in S^{n-1}$.  Note that
\begin{equation}%
  \label{eq:GC-edges}
  \text{$S$, $T \in \sorto(n)$ are adjacent in~$G(C)$\qquad$\iff$\qquad}
  0 < Se \cdot Te < 2\phi.
\end{equation}

Let~$(\lambda, A)$ be a feasible solution of~\eqref{opt:lp-sphere}.  For~$S$, $T
\in \sorto(n)$, set~$A'(S, T) = A(Se, Te)$.  Note that~$A'$ is continuous and
positive semidefinite, and so using~\eqref{eq:GC-edges} we see that~$(\lambda,
A')$ is a feasible solution of~\eqref{opt:theta-kernel},
hence~$\vartheta'(G(C))$ is at most the optimal value of~\eqref{opt:lp-sphere}.

For the reverse inequality, let~$(\lambda, A)$ be a feasible solution
of~\eqref{opt:theta-kernel}.  We know that we can assume that~$A$
is invariant under~$\sorto(n)$; in particular,~$A$ has constant diagonal.  We
can also assume that
\begin{equation}%
  \label{eq:A-property}
  A(S, T) = A(S', T')\qquad\text{whenever}\qquad (Se, Te) = (S'e, T'e).
\end{equation}

Indeed, let~$\Scal = \{\, S \in \sorto(n) : Se = e\,\}$ be the stabilizer
subgroup of~$e$.  Let~$\mu$ be the Haar probability measure on~$\Scal$ and set
\begin{equation}%
  \label{eq:over-A-def}
  \overline{A}(S, T) = \int_\Scal \int_\Scal A(SR_1, TR_2)\, dR_2 dR_1.
\end{equation}
We claim that~$(\lambda, \overline{A})$ is a feasible solution
of~\eqref{opt:theta-kernel}, which by construction
satisfies~\eqref{eq:A-property}.

Note that~$\overline{A}$ is continuous.  With~\eqref{eq:GC-edges} we see
that~$\overline{A}(S, T) \leq -1$ whenever~$S$, $T$ are adjacent.  Since~$A$ is
continuous and positive semidefinite, if~$S$, $T \in \sorto(n)$, then the matrix
\[
  \begin{pmatrix}
    A(S, S)&A(S, T)\\
    A(S, T)&A(T, T)
  \end{pmatrix}
\]
is positive semidefinite, and hence has nonnegative determinant.  It follows
that $|A(S, T)| \leq \delta$, where~$\delta$ is the common value of the diagonal
entries of~$A$, hence $\overline{A}(S, S) \leq \lambda - 1$ for all~$S$.
Finally, it follows from the spectral theorem, by distributing the double
integral in~\eqref{eq:over-A-def} over a spectral decomposition of~$A$,
that~$\overline{A}$ is positive semidefinite, proving the claim.

So assume that~$A$ satisfies~\eqref{eq:A-property} and for~$x$, $y \in S^{n-1}$
let~$A'(x, y) = A(Se, Te)$, where~$S$ and~$T$ are any elements of~$\sorto(n)$
such that~$(Se, Te) = (x, y)$.  Since~$A$ satisfies~\eqref{eq:A-property}, the
kernel~$A'$ is well defined.  Using~\eqref{eq:GC-edges} it is then immediate
that~$(\lambda, A')$ is a feasible solution of~\eqref{opt:lp-sphere}, finishing
the proof of the reverse inequality.
\end{proof}

Though Theorem~\ref{thm:spherical-codes-cmp} shows that the linear programming
bound for spherical codes can never be better than the theta prime number for
the original packing graph~$G(K)$, the linear programming bound is far from
useless: indeed, it is enough to prove maximality of some configurations. Though
it can never be better than~$\vartheta'(G(K))$, it is much simpler to compute.
Moreover, there are other upper bounds for spherical codes, like for instance
the Bachoc-Vallentin~\cite{BachocV2008} bound, that are better than the linear
programming bound; these improved bounds can be better than~$\vartheta'(G(K))$.
Finally, bounds for spherical codes are often provably not sufficient to show
maximality of certain configurations; such cases are also discussed
in~\S\ref{sec:results}.


\section{Harmonic analysis on $\sorto(3)$}%
\label{sec:harmonic}

Our goal is to find upper bounds for packings of proper cones in~$\R^3$ by
computing feasible solutions of~\eqref{opt:theta-func}.  A first step in this
direction is to parameterize positive type functions~$f\colon \sorto(3) \to \R$
in some convenient way.  This parametrization uses the representation theory
of~$\sorto(3)$; for basic terminology and facts on representation theory, the
reader is referred to the book by Bröcker and tom Dieck~\cite{BrockerD1985}.

Let~$G$ be a compact group.  A continuous function~$f\colon G \to \C$
is of \defi{positive type} if~$f(x^{-1}) = \overline{f(x)}$ for all~$x \in
G$ and the kernel~$(x, y) \mapsto f(y^{-1}x)$ is positive
semidefinite, that is, if for every finite set~$U \subseteq G$ the matrix
$\bigl(f(y^{-1}x)\bigr)_{x, y \in U}$ is (Hermitian) positive
semidefinite.

The \defi{dual group}~$\widehat{G}$ of~$G$ is the set of all irreducible unitary
representations of~$G$ (which are finite dimensional) up to equivalence.  We
assume that the representation spaces are always~$\C^n$ for some~$n \geq 1$, and
so an irreducible representation in~$\widehat{G}$ is a mapping from~$G$ to the
group of~$n \times n$ unitary matrices for some~$n \geq 1$.

{\sloppy
Bochner~\cite{Bochner1941} characterized continuous functions of positive type
on compact groups (see also Folland~\cite{Folland1995}):
\par}

\begin{theorem}%
  \label{thm:Bochner}
  Let~$G$ be a compact group and let~$\widehat{G}$ be its dual group. A
  function~$f\colon G \to \C$ is continuous and of positive type if and only if
  there are Hermitian positive-semidefinite matrices~$F_\rho$ for~$\rho \in
  \widehat{G}$ with~$\sum_{\rho\in\widehat{G}} \trace F_\rho < \infty$ such that
  \[
    f(x) = \sum_{\rho \in \widehat{G}} \langle F_\rho,
    \rho(x)\rangle,
  \]
  in which case the series converges absolutely and uniformly.
\end{theorem}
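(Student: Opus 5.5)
The plan is to prove the two directions separately, using the Peter--Weyl theorem and Schur orthogonality. Let $\mu$ be the normalized Haar measure on~$G$ and $d_\rho$ the dimension of~$\rho$. Schur orthogonality says that the functions $\sqrt{d_\rho}\,\rho_{ij}$ form an orthonormal system in~$L^2(G)$. Peter--Weyl says this system is complete, and that finite linear combinations of matrix coefficients are uniformly dense in~$C(G)$.

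\emph{Sufficiency.} Suppose $f(x) = \sum_\rho \langle F_\rho, \rho(x)\rangle$ with $F_\rho \succeq 0$ and $\sum_\rho \trace F_\rho < \infty$. Since $\rho(x)$ is unitary, each entry has modulus at most~$1$, so
\[
  |\langle F_\rho, \rho(x)\rangle| \leq \sum_{i,j} |(F_\rho)_{ij}| \leq d_\rho \trace F_\rho .
\]
Absolute and uniform convergence (hence continuity) therefore needs a better estimate. For this I write $F_\rho = \sum_k \lambda_k v_k v_k^*$ with $\lambda_k \geq 0$. Then $\langle v v^*, \rho(x)\rangle = v^*\rho(x)v$, which has modulus at most $\|v\|^2$. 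Hence $|\langle F_\rho,\rho(x)\rangle| \leq \trace F_\rho$, and the Weierstrass M-test gives absolute and uniform convergence.

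Next come the two defining properties of positive type. The identity $f(x^{-1}) = \overline{f(x)}$ follows from $\rho(x^{-1}) = \rho(x)^*$ and the Hermitian property of~$F_\rho$. For positive semidefiniteness, take $x_1, \ldots, x_m \in G$ and $c \in \C^m$. Using $\rho(y^{-1}x) = \rho(y)^*\rho(x)$, each term of $\sum_{a,b} \overline{c_a} c_b\, f(x_b^{-1}x_a)$ becomes
\[
  \sum_k \lambda_k \Bigl\| \sum_a c_a \rho(x_a)^* v_k \Bigr\|^2 \geq 0
\]
up to conjugation and ordering conventions, which I will fix carefully. Uniform convergence then passes this inequality to~$f$.

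\emph{Necessity.} Let $f$ be continuous and of positive type, and define the Fourier coefficients $F_\rho = d_\rho \int_G f(x)\,\overline{\rho(x)}\,d\mu(x)$, normalized so that the expansion reads $\langle F_\rho, \rho(x)\rangle$.
\begin{itemize}
  \item[(1)] $F_\rho$ is Hermitian. This follows from $f(x^{-1}) = \overline{f(x)}$ together with invariance of~$\mu$ under inversion.
  \item[(2)] $F_\rho \succeq 0$. Positive semidefiniteness of the kernel implies, by approximating integrals with Riemann sums (using continuity and compactness), that
  \[
    \iint f(y^{-1}x)\, g(x)\, \overline{g(y)}\, d\mu(x)\, d\mu(y) \geq 0 \quad \text{for all } g \in C(G).
  \]
  Choosing $g$ to be a suitable matrix-coefficient function $x \mapsto v^*\rho(x)\,\cdots$ and using Schur orthogonality should collapse this integral to a positive multiple of $v^* F_\rho v$, or of its transpose or conjugate, which is equally positive.
  \item[(3)] Convergence. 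Peter--Weyl gives $f = \sum_\rho \langle F_\rho, \rho\rangle$ in~$L^2$, but we still need $\sum_\rho \trace F_\rho < \infty$. This is the main obstacle. My approach is to consider the convolution $f * \varphi_\alpha$ with an approximate identity $\varphi_\alpha$ chosen as central functions of positive type, for instance $\varphi_\alpha = \psi_\alpha * \psi_\alpha^*$. Its Fourier coefficients are $F_\rho$ multiplied by scalars $c_{\rho,\alpha} \in [0,1]$ that tend to~$1$. Evaluating at the identity, and using the expansion (valid for such smooth, trigonometric-polynomial-like truncations), gives
  \[
    \sum_\rho c_{\rho,\alpha} \trace F_\rho = (f * \varphi_\alpha)(e) \to f(e) ,
  \]
  where $\trace F_\rho = \langle F_\rho, \rho(e)\rangle$. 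Because all terms are nonnegative, Fatou's lemma yields $\sum_\rho \trace F_\rho \leq f(e) < \infty$.
\end{itemize}
Sufficiency then shows that the series converges uniformly to a continuous function. That function has the same Fourier coefficients as~$f$, so completeness from Peter--Weyl forces it to equal~$f$ everywhere, since both are continuous. The normalization constants $d_\rho$ and the conjugation conventions are the routine part that I will check as I go.
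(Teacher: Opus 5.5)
For sufficiency, the only direction the paper actually proves, your argument is the paper's. The paper expands $\sum_{x,y}a_x\overline{a_y}f(y^{-1}x)$ as $\sum_\rho\langle F_\rho,E_\rho\rangle$ with $E_\rho=\bigl(\sum_x a_x\rho(x)\bigr)^*\bigl(\sum_x a_x\rho(x)\bigr)$. That is your positivity computation, with the factorization placed on the $\rho(x)$ side instead of on $F_\rho$. Your bound $|\langle F_\rho,\rho(x)\rangle|\le\trace F_\rho$ makes explicit the uniform convergence that the paper only asserts.

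The paper gives no proof of necessity. It cites Bochner and Folland, since only the easy direction is needed to certify that a function built from positive-semidefinite $F_\rho$ is feasible in the theta program. Your necessity argument is therefore an addition, and its outline is sound. In step~(2), the test function $g(x)=\overline{u^*\rho(x)w}$ works: substituting $x=yz$ and applying Schur orthogonality in $y$ collapses the double integral to $\|u\|^2d_\rho^{-2}\,\overline{w}^{\,*}F_\rho\overline{w}$ for your $F_\rho$. With your normalization the expansion actually needs $\overline{F_\rho}$. This is harmless, because $F_\rho$ is Hermitian and $\overline{F_\rho}=F_\rho^{\tp}$ is positive semidefinite as well.

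The one step that does not go through as written is the evaluation $(f*\varphi_\alpha)(e)=\sum_\rho c_{\rho,\alpha}\trace F_\rho$ in step~(3). If $\psi_\alpha$ is a bump function, $f*\varphi_\alpha$ is not a trigonometric polynomial, and a general compact group has no smoothness to appeal to. Peter--Weyl alone gives only $L^2$ convergence, which says nothing about the value at the single point~$e$.

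With your own choice $\varphi_\alpha=\psi_\alpha*\psi_\alpha^*$ the repair is short:
\begin{itemize}
\item Unwinding the convolutions, $(f*\varphi_\alpha)(e)=\int_G\overline{\psi_\alpha}\,(f*\psi_\alpha)\,d\mu$, an inner product of two $L^2$ functions. Parseval's identity expands it as an absolutely convergent sum over $\widehat{G}$.
\item Take $\psi_\alpha\ge0$ central with integral~$1$. This is possible because a compact group has a basis of conjugation-invariant neighborhoods of the identity.
\item Then the Fourier coefficient of $\psi_\alpha$ at $\rho$ is $s_{\rho,\alpha}I$ with $|s_{\rho,\alpha}|\le1$ and $s_{\rho,\alpha}\to1$. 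So the $\rho$-term of the sum is exactly $|s_{\rho,\alpha}|^2\trace F_\rho$.
\end{itemize}
Bounding finite partial sums by $(f*\varphi_\alpha)(e)$ and passing to the limit then gives $\sum_\rho\trace F_\rho\le f(e)$, and the rest of your argument stands.
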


The direction of Bochner's theorem that we need, namely that if~$f$ is given by
an expression as above for positive-semidefinite matrices~$F_\rho$ then it is
continuous and of positive type, is easy to prove.  Indeed, say~$f$ is given by
the expression above.  Continuity follows from the continuity of the irreducible
representations, together with the positive-semidefiniteness of~$F_\rho$ and the
condition that~$\sum_{\rho\in\widehat{G}} \trace F_\rho < \infty$.  To see
that~$f$ is of positive type, let~$U \subseteq G$ be a finite set and let~$a_x$,
for $x \in U$, be complex numbers.  For~$\rho \in \widehat{G}$ write
\[
  \begin{split}
    E_\rho = \sum_{x, y \in U} a_x \overline{a_y} \rho(y^{-1}x)
    &=\sum_{x, y \in U} a_x \overline{a_y} \rho(y)^*\rho(x)\\
    &=\biggl(\sum_{x \in U} a_x \rho(x)\biggr)^*
      \biggl(\sum_{x \in U} a_x \rho(x)\biggr);
  \end{split}
\]
note that~$E_\rho$ is positive semidefinite.  Then, since the~$F_\rho$ matrices
are positive semidefinite,
\[
  \sum_{x, y \in U} a_x \overline{a_y} f(y^{-1}x)
  = \sum_{\rho \in \widehat{G}} \langle F_\rho, E_\rho\rangle \geq 0,
\]
and so~$f$ is of positive type.

To apply Bochner's theorem to~$\sorto(3)$, we need to know its irreducible
representations.  The theory is classical; the relevant details are presented
here to keep the paper self contained.

To study the representation theory of~$\sorto(3)$, we need to fix a
parametrization of rotation matrices.  There are several alternatives, but for
us Euler angles, defined in~\S\ref{sec:notation}, is the most convenient one.


\subsection{Representation theory of~$\sorto(3)$}%
\label{sec:sorto3-reps}

The representation theory of~$\sorto(3)$ is closely related to that
of~$\suni(2)$, the group of~$2\times 2$ unitary matrices with determinant~$1$.
This section is based on expositions of the classical theory by Andrews, Askey,
and Roy~\cite{AndrewsAR1999}, Bröcker and tom Dieck~\cite{BrockerD1985}, and
Chirikjian and Kyatkin~\cite{ChirikjianK2001}.

Any~$U \in \suni(2)$ can be written as
\begin{equation}%
  \label{eq:suni-matrix}
  U = \begin{pmatrix}
    a&b\\
    -\overline{b}&\overline{a}
  \end{pmatrix}
\end{equation}
with~$|a|^2 + |b|^2 = 1$.  It follows that~$U$ is uniquely determined by~$|a|$
and the arguments of~$a$ and~$b$.  If we write
\[
  a = e^{i(\alpha + \gamma) / 2} \cos(\beta / 2)\qquad\text{and}\qquad
  b = i e^{i(\alpha-\gamma)/2} \sin(\beta/2),
\]
then~$U = U(\alpha, \beta, \gamma) = U_Z(\gamma) U_X(\beta) U_Z(\alpha)$, where
\[
  U_X(\theta) = \begin{pmatrix}
    \cos(\theta / 2)  & i\sin(\theta / 2)\\
    i\sin(\theta / 2) & \cos(\theta / 2)
  \end{pmatrix}
  \qquad\text{and}\qquad
  U_Z(\theta) = \begin{pmatrix}
    e^{i\theta/2} & 0\\
    0             & e^{-i\theta/2}
  \end{pmatrix}.
\]
When~$ab \neq 0$, the angles~$\alpha$, $\beta$, and~$\gamma$ can be uniquely
chosen in the intervals~$[0, 2\pi)$, $[0, \pi]$, and~$[-2\pi, 2\pi)$, and are
called the \defi{Euler angles} of~$U$.

Define the map~$\xi\colon\R^3 \to \{\, M \in \C^{2\times 2} : M = M^*\text{ and
}\trace M = 0\,\}$ by
\[
  \xi(x) = \begin{pmatrix}
    -x_3       & x_1 - ix_2\\
    x_1 + ix_2 & x_3
  \end{pmatrix}.
\]
This is an invertible linear map from~$\R^3$ to the real space of~$2 \times 2$
Hermitian traceless matrices.

\begin{theorem}%
  \label{thm:suni-sorto-homomorphism}
  The map~$\Phi\colon \suni(2) \to \R^{3 \times 3}$ such that
  \[
    \Phi(U)x = \xi^{-1}(U\xi(x)U^*)
  \]
  for all~$x \in \R^3$ is a surjective homomorphism from~$\suni(2)$
  to~$\sorto(3)$ with kernel~$\{I, -I\}$.
\end{theorem}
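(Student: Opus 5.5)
We follow the classical route and check, in order, that $\Phi$ is well defined, lands in $\sorto(3)$, is a homomorphism, has kernel $\{I,-I\}$, and is surjective.

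\emph{Well defined and orthogonal.} For $U\in\suni(2)$ and $x\in\R^3$, the matrix $U\xi(x)U^*$ is Hermitian because $\xi(x)$ is. It is traceless because $\trace U\xi(x)U^* = \trace\xi(x) = 0$. So it lies in the image of $\xi$, and $\Phi(U)$ is a well-defined real linear map on $\R^3$. Next, note that $\det\xi(x) = -x_3^2 - (x_1^2+x_2^2) = -\|x\|^2$. Since $\det(U\xi(x)U^*) = \det\xi(x)$, the map $\Phi(U)$ preserves norms and is therefore orthogonal.

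\emph{Homomorphism and determinant.} The homomorphism property is immediate:
\[
\Phi(UV)x = \xi^{-1}\bigl(UV\xi(x)V^*U^*\bigr) = \Phi(U)\Phi(V)x.
\]
For the determinant, $\suni(2)$ is connected, being homeomorphic to $S^3$ via $(a,b)$ with $|a|^2+|b|^2=1$. The map $U\mapsto\det\Phi(U)\in\{\pm1\}$ is continuous, and $\Phi(I)=I$, so $\det\Phi(U)=1$ for every $U$.

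\emph{Kernel.} Suppose $\Phi(U)=I$. Then $U$ commutes with every traceless Hermitian matrix. Every $2\times2$ matrix is a complex combination of $I$ and such matrices, so $U$ commutes with all of $\C^{2\times2}$ and is scalar. With $\det U=1$ this forces $U=\pm I$. Conversely, $\pm I$ clearly act trivially.

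\emph{Surjectivity.} We compute $\Phi$ on the generators $U_Z(\theta)$ and $U_X(\theta)$ and check directly that
\[
\Phi(U_Z(\theta)) = R_Z(\theta), \qquad \Phi(U_X(\theta)) = R_X(\theta).
\]
If the conventions produce $R_Z(-\theta)$ instead, replacing $\theta$ by $-\theta$ is harmless for surjectivity. For $U_Z$, conjugation multiplies the off-diagonal entry $x_1+ix_2$ by $e^{\pm i\theta}$ and fixes $x_3$, which gives a rotation about $e_3$. The $U_X$ case is a similar short computation. Since $\Phi$ is a homomorphism,
\[
\Phi(U(\alpha,\beta,\gamma)) = R(\alpha,\beta,\gamma).
\]
Every element of $\sorto(3)$ is an Euler rotation, as noted in \S\ref{sec:notation}, so $\Phi$ is onto.

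The only step requiring actual computation, and the likely obstacle, is matching signs and conventions in the $U_X$ and $U_Z$ computations with the matrices~\eqref{eq:rotation-matrices}. A mismatch of this kind only reparametrizes the angles and does not affect surjectivity.
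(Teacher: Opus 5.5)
Your proof is correct and takes essentially the paper's route: homomorphism by direct calculation, surjectivity from the images of $U_Z(\theta)$ and $U_X(\theta)$ plus Euler angles, and kernel from commuting with all traceless Hermitian matrices. The one small difference is that you check $\Phi(U)\in\sorto(3)$ intrinsically, via $\det\xi(x)=-\|x\|^2$ and the connectedness of $\suni(2)$, whereas the paper gets this implicitly from the factorization $U=U_Z(\gamma)U_X(\beta)U_Z(\alpha)$ of every element of $\suni(2)$.

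Your caution about conventions is warranted. With the matrices as written one gets $\Phi(U_Z(\theta))=R_Z(\theta)$ but $\Phi(U_X(\theta))=R_X(-\theta)$; for example, $\Phi(U_X(\pi/2))e_2=e_3$ while $R_X(\pi/2)e_2=-e_3$. So the identity $\Phi(U(\alpha,\beta,\gamma))=R(\alpha,\beta,\gamma)$, which you share with the paper, should read $R(\alpha,-\beta,\gamma)$, and as you say this leaves surjectivity intact.
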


\begin{proof}
A simple calculation shows that~$\Phi(I) = I$ and~$\Phi(U_1 U_2) = \Phi(U_1)
\Phi(U_2)$, and so~$\Phi$ is a group homomorphism. Moreover,~$\Phi(U_X(\theta))
= R_X(\theta)$ and~$\Phi(U_Z(\theta)) = R_Z(\theta)$, and so~$\Phi$ is a
surjective homomorphism from~$\suni(2)$ to~$\sorto(3)$.

It remains to show that the kernel of~$\Phi$ is~$\{I, -I\}$.  Indeed,~$\Phi(I) =
\Phi(-I) = I$.  If~$\Phi(U) = I$ for some~$U \in \suni(2)$, then for all~$x \in
\R^3$ we have~$x = \Phi(U)x = \xi^{-1}(U\xi(x)U^*)$, hence~$U\xi(x) = \xi(x)U$.
So~$U$ has to commute with every Hermitian traceless matrix, implying that~$U =
\pm I$.
\end{proof}

From this theorem, it follows that an irreducible representation of~$\sorto(3)$
gives an irreducible representation~$\rho$ of~$\suni(2)$ such that~$\rho(I) =
\rho(-I)$.  Conversely, an irreducible representation~$\rho$ of~$\suni(2)$
with~$\rho(I) = \rho(-I)$ gives an irreducible representation of~$\sorto(3)$.
Hence our goal becomes describing the irreducible representations of~$\suni(2)$.

For a half-integer~$k \geq 0$, let~$V_k$ be the space of homogeneous polynomials
of degree~$2k$ in~$\C[z_1, z_2]$.  (The use of half-integer indices seems odd at
first, but it is classical and leads to cleaner formulas.)  A matrix~$U \in
\suni(2)$ acts on~$p \in V_k$ by
\[
  (U \cdot p)(z) = p(U^* z),
\]
where~$z = (z_1, z_2)$.  This action gives the representation~$\rho_k$
of~$\suni(2)$ on~$V_k$.

The representations~$\rho_k$ form a complete list of nonequivalent irreducible
representations of~$\suni(2)$ (see Bröcker and tom Dieck~\cite{BrockerD1985}).
For~$p \in V_k$ we have~$-I \cdot p = (-1)^{2k} p$, so given our previous
observation we get from each representation~$\rho_k$ of~$\suni(2)$ for
integer~$k \geq 0$ a representation~$\tau_k$ of~$\sorto(3)$, and these form a
complete list of nonequivalent irreducible representations of~$\sorto(3)$.


\subsection{Concrete formulas}

To use Bochner's theorem effectively, we need to have concrete formulas for the
representations of~$\suni(2)$.  Let~$p \in \C[z_1, z_2]$.  We denote
by~$\overline{p}$ the polynomial obtained from~$p$ by conjugating each
coefficient.  We denote by~$p(\partial)$ the differential operator obtained
from~$p$ by replacing each occurrence of~$z_i^n$ by~$\partial^n/\partial z_i^n$.
For instance, if~$p(z_1, z_2) = 2z_1^2 z_2 - 3z_1^3$, then
\[
  p(\partial) = 2\frac{\partial^3}{\partial z_1^2\partial z_2} -
  3 \frac{\partial^3}{\partial z_1^3}.
\]
We can define an inner product on~$V_k$ by
\[
  \langle p, q\rangle = \overline{p}(\partial) q.
\]
This inner product is invariant under the action of~$\suni(2)$, and so the
representation~$\rho_k$ on~$V_k$ is unitary when~$V_k$ is equipped with this
inner product.

For real numbers~$a < b$, let
\[
  \zint[]{a}{b} = \{\, a + k : k \in \Z,\ k \geq 0,\ a + k \leq b\,\}.
\]
Fix a half-integer~$k \geq 0$.  Our goal is to compute the entries of the matrix
representation of~$\rho_k$.  For~$n \in \zint[]{-k}{k}$, let
\[
  e_n = ((k-n)! (k+n)!)^{-1/2} z_1^{k-n} z_2^{k+n}.
\]
Direct verification shows that the polynomials~$e_n$ form an orthonormal basis
of~$V_k$.  Given~$U \in \suni(2)$ and~$m$, $n \in \zint[]{-k}{k}$, we want to
compute the~$(m, n)$ entry of~$\rho_k(U)$, namely
\[
  \rho_k(U)_{mn} = \langle e_m, U \cdot e_n\rangle.
\]

If~$p \in V_k$, then
\[
  \langle e_m, p\rangle = \biggl(\frac{(k+m)!}{(k-m)!}\biggr)^{1/2}
  \frac{d^{k-m}}{dz^{k-m}} p(z, 1) \biggr|_{z=0}.
\]
(Check it for~$p = e_n$ for~$n \in \zint[]{-k}{k}$.) If~$U$ is given as
in~\eqref{eq:suni-matrix}, then
\[
  U \cdot e_n = ((k-n)! (k+n)!)^{-1/2} (\overline{a} z_1 - b z_2)^{k-n}
  (\overline{b} z_1 + a z_2)^{k+n}
\]
and hence
\begin{equation}%
  \label{eq:rho-entry}
  \rho_k(U)_{mn} = C_{mn} \frac{d^{k-m}}{dz^{k-m}} (\overline{a} z - b)^{k-n}
  (\overline{b} z + a)^{k+n} \biggr|_{z=0},
\end{equation}
where
\[
  C_{mn} = \biggl(\frac{(k+m)!}{(k-m)! (k-n)! (k+n)!}\biggr)^{1/2}.
\]

If we represent~$U$ via Euler angles as~$U = U(\alpha, \beta, \gamma)$, then
\[
  \rho_k(U) = \rho_k(U_Z(\gamma)) \rho_k(U_X(\beta)) \rho_k(U_Z(\alpha)).
\]
Hence it suffices to compute the matrix entries of~$\rho_k(U_X(\beta))$
and~$\rho_k(U_Z(\theta))$.

Set~$a = e^{i\theta/2}$ and~$b = 0$ in~\eqref{eq:rho-entry} to conclude
that~$\rho_k(U_Z(\theta))$ is diagonal and
\begin{equation}%
  \label{eq:UZ-entry}
  \rho_k(U_Z(\theta))_{mm} = e^{im\theta}.
\end{equation}

The case of~$\rho_k(U_X(\beta))$ is more complicated.  For~$w \in [-1, 1]$,
let
\[
  U(w) = \begin{pmatrix}
    ((1+w)/2)^{1/2}  & i((1-w)/2)^{1/2}\\
    i((1-w)/2)^{1/2} & ((1+w)/2)^{1/2}
  \end{pmatrix}.
\]
Note that~$U(w) \in \suni(2)$.  The trigonometric identities
\begin{equation}%
  \label{eq:trig-identities}
  \sin^2(\beta/2) = (1 - \cos\beta)/2\qquad\text{and}\qquad
  \cos^2(\beta/2) = (1 + \cos\beta)/2
\end{equation}
imply that~$U_X(\beta) = U(\cos\beta)$ for~$\beta \in [0, \pi]$.

For a half-integer~$k \geq 0$,~$m$, $n \in \zint[]{-k}{k}$, and~$w \in [-1, 1]$,
set
\[
  P_{mn}^k(w) = i^{m-n} \rho_k(U(w))_{mn}.
\]
The following theorem shows how~$P_{mn}^k$ can be expressed in terms of Jacobi
polynomials; see also~\S9.14 of Andrews, Askey, and Roy~\cite{AndrewsAR1999} or
Chapter~9 of Chirikjian and Kyatkin~\cite{ChirikjianK2001}.

\begin{theorem}
  If~$k \geq 0$ is half-integer, if~$m$, $n \in \zint[]{-k}{k}$, and if~$w \in
  [-1, 1]$, then
  \begin{equation}%
    \label{eq:pmn-relations}
    P_{mn}^k(w) = (-1)^{m-n} P_{nm}^k(w)\quad\text{and}\quad
    P_{mn}^k(w) = (-1)^{m-n} P_{-m, -n}^k(w).
  \end{equation}
  Moreover, if~$m-n$, $m+n \geq 0$, then
  \begin{multline}%
    \label{eq:pmn-w-formula}
    P_{mn}^k(w) = \biggl(\frac{(k-m)!(k+m)!}{(k-n)!(k+n)!}\biggr)^{1/2}
    2^{-m} (1-w)^{(m-n)/2} (1+w)^{(m+n)/2}\\{}\cdot P_{k-m}^{m-n, m+n}(w),
  \end{multline}
  where~$P_k^{(\alpha, \beta)}$ is the Jacobi polynomial of degree~$k$ with
  parameters~$\alpha$, $\beta$ in the standard normalization.
\end{theorem}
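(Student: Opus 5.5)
The plan is to compute $\rho_k(U(w))_{mn}$ directly from~\eqref{eq:rho-entry}, expand it by the binomial theorem into a finite sum, and then read off both~\eqref{eq:pmn-relations} and~\eqref{eq:pmn-w-formula} from that sum.

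\emph{Explicit sum.} Write $c = ((1+w)/2)^{1/2}$ and $s = ((1-w)/2)^{1/2}$, so that $U(w)$ is of the form~\eqref{eq:suni-matrix} with $a = c$ and $b = is$. Substituting into~\eqref{eq:rho-entry} gives
\[
  \rho_k(U(w))_{mn} = C_{mn} \frac{d^{k-m}}{dz^{k-m}} (cz - is)^{k-n}(c - isz)^{k+n}\biggr|_{z=0}.
\]
The derivative at $0$ equals $(k-m)!$ times the coefficient of $z^{k-m}$. Expanding both factors binomially, that coefficient is a sum over $j$ (the power of $z$ taken from the first factor) of
\[
  \binom{k-n}{j}\binom{k+n}{k-m-j}\, c^{\,j + m + n + j}\,(-is)^{\,k-n-j + k-m-j}.
\]
The total power of $-i$ is $2k-m-n-2j$. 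Multiplying by $i^{m-n}$ therefore leaves a real sign $(-1)^{k-n-j}$ up to a global factor depending only on $k$, $m$, $n$. This is consistent with $P^k_{mn}$ being real.

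\emph{Symmetries.} Reindex the sum, for instance $j \mapsto k-m-j$ or $j \mapsto k-n-j$, and combine with the explicit value of $C_{mn}(k-m)!$. Under $m \leftrightarrow n$, the binomial products $\binom{k-n}{j}\binom{k+n}{k-m-j}$ transform into each other up to the factorial ratio $((k-m)!(k+m)!)/((k-n)!(k+n)!)$. This ratio is exactly absorbed by $C_{mn}(k-m)!$ versus $C_{nm}(k-n)!$, while the exponents of $c$ and $s$ are preserved and the signs differ by $(-1)^{m-n}$. The same bookkeeping applies to $(m,n) \mapsto (-m,-n)$. A cleaner alternative, to cross-check, is structural. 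Since $U(w)^{\tp} = U(w)$, we have $\rho_k(U(w))^{\tp} = D\rho_k(U(w))D^{-1}$ for a suitable diagonal signed matrix $D$, because transposition on $\suni(2)$ is realized in the basis $e_n$ up to signs. Conjugating by the matrix swapping $z_1 \leftrightarrow z_2$, which sends $e_n \mapsto e_{-n}$, gives the second relation.

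\emph{Jacobi polynomials.} Assume $m-n \geq 0$ and $m+n \geq 0$. Factor out $s^{m-n} c^{m+n}$, which equals $2^{-m}(1-w)^{(m-n)/2}(1+w)^{(m+n)/2}$. What remains is a sum in powers of $c^2 = (1+w)/2$ and $s^2 = (1-w)/2$ of total degree $k-m$. Compare this with the standard expansion
\[
  P^{(\alpha,\beta)}_N(w) = \sum_t \binom{N+\alpha}{N-t}\binom{N+\beta}{t}\Bigl(\frac{w-1}{2}\Bigr)^t\Bigl(\frac{w+1}{2}\Bigr)^{N-t},
\]
taking $N = k-m$, $\alpha = m-n$, and $\beta = m+n$. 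Here $\binom{N+\alpha}{N-t} = \binom{k-n}{\cdot}$ and $\binom{N+\beta}{t} = \binom{k+n}{\cdot}$ match our binomials after the right index shift. The sign $(-1)^t$ comes from $(w-1)/2 = -s^2$, and the remaining constant $C_{mn}(k-m)!$ simplifies to the stated square-root prefactor.

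I expect the main obstacle to be this final matching: choosing the correct reindexing so that the summation ranges agree, and then tracking all signs. The signs come from three sources: the powers of $-i$, the prefactor $i^{m-n}$, and $(-1)^t$. I would settle these by first checking small cases such as $k=1/2$ and $k=1$ before committing to the general identity.
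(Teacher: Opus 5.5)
Your proposal is correct, but it takes a different route from the paper.

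\emph{The paper's route.} For~\eqref{eq:pmn-w-formula} the paper never expands the product. For $w\in(-1,1)$ it substitutes $x=2\overline{ab}z-2b\overline{b}+1$. This turns $(\overline{a}z-b)^{k-n}(\overline{b}z+a)^{k+n}$ into a constant multiple of $(1-x)^{k-n}(1+x)^{k+n}$, so the $(k-m)$-th derivative at $z=0$ becomes one in $x$ at $x=w$, and Rodrigues's formula applies. The endpoints $w=\pm1$, where $ab=0$, are checked separately. For the relations, the paper notes that the polynomial in~\eqref{eq:rho-entry} for $U(w)^*$ is the coefficientwise conjugate of the one for $U(w)$. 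Combined with unitarity this gives $\rho_k(U(w))_{mn}=\rho_k(U(w))_{nm}$. The second relation comes from an analogous identity linking $U(-w)$ and $U(w)$.

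\emph{Your route.} Your binomial expansion produces the explicit sum
\[
P^k_{mn}(w)=\sum_j (-1)^{k-m-j}\,\frac{\sqrt{(k+m)!\,(k-m)!\,(k-n)!\,(k+n)!}}{j!\,(k-n-j)!\,(k-m-j)!\,(m+n+j)!}\,c^{m+n+2j}s^{2k-m-n-2j},
\]
taken over all $j$ for which the factorials have nonnegative arguments. With $t=k-m-j$, the sign is exactly $(-1)^t$ and the binomials are exactly those of the Jacobi sum. So you avoid the substitution and need no separate endpoint check. The $m\leftrightarrow n$ relation needs no reindexing at all: the coefficient above is symmetric in $m$ and $n$, and only the sign changes, by $(-1)^{m-n}$.

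\emph{Small corrections.} For $(m,n)\mapsto(-m,-n)$, the reindexing you need is the shift $j\mapsto j+m+n$, not $j\mapsto k-m-j$. Keep the sign as the single integer power $(-1)^{k-m-j}$. When $k\notin\Z$, splitting it produces factors like $(-1)^{2m}=-1$ that are easy to drop.

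\emph{Structural cross-check.} This also works, with $D=I$. In the basis $e_n$ one has $\rho_k(U^\tp)=\rho_k(U)^\tp$ exactly, and the swap $z_1\leftrightarrow z_2$ sends $e_n\mapsto e_{-n}$. Since $U(w)$ is fixed both by transposition and by conjugation with the swap matrix, you get $\rho_k(U(w))_{mn}=\rho_k(U(w))_{nm}=\rho_k(U(w))_{-m,-n}$. Both relations then follow from the factor $i^{m-n}$.

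\emph{Comparison.} The paper's argument is shorter once Rodrigues's formula is available. Yours is more elementary and covers $w=\pm1$ uniformly, at the cost of more sign bookkeeping.
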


\begin{proof}
If~$p \in \C[z]$ is a polynomial, then~$\overline{p}'(z) = \overline{p'}(z)$,
that is, the derivative of the conjugate polynomial is the conjugate of the
derivative.  From~\eqref{eq:rho-entry} we then get
\[
  \rho_k(U(w))_{mn} = \overline{\rho_k(U(w)^*)_{mn}} =
  \overline{\rho_k(U(w))^*_{mn}} = \rho_k(U(w))_{nm}.
\]
This establishes the first identity in~\eqref{eq:pmn-relations}.

For the second identity, use the same observation to see
from~\eqref{eq:rho-entry} that
\[
  \overline{\rho_k(U(-w))_{m,-n}} = (-1)^k \rho_k(U(w))_{mn}
\]
and then to finish apply this identity twice together with the previous one.

To find a formula for~$P_{mn}^k$, let~$w \in (-1, 1)$.  Set~$a =
((1+w)/2)^{1/2}$ and~$b = i((1-w)/2)^{1/2}$; note that~$a$, $b \neq 0$. With~$x
= 2\overline{ab} z - 2 b \overline{b} + 1$ we have
\[
  \overline{a} z - b = -\frac{1 - x}{2\overline{b}}\qquad\text{and}\qquad
  \overline{b} z + a = \frac{1 + x}{2\overline{a}}.
\]
Make this substitution in~\eqref{eq:rho-entry} and apply the chain rule
repeatedly to get
\begin{multline*}
  \rho_k(U(w)) = C_{mn} 2^{-k} (-1)^{k-m} i^{n-m} (1 - w)^{(n-m)/2}
  (1+w)^{-(n+m)/2}\\
  {}\cdot \frac{d^{k-m}}{dx^{k-m}} (1-x)^{k-n} (1+x)^{k+n} \biggr|_{x=w}.
\end{multline*}
Since~$m-n$, $m+n \geq 0$, we can apply Rodrigues's formula ((2.5.13$'$) in
Andrews, Askey, and Roy~\cite{AndrewsAR1999}) to get~\eqref{eq:pmn-w-formula}.
The validity of the formula for~$w = \pm 1$ can be checked directly.
\end{proof}

Formula~\eqref{eq:pmn-w-formula} is simple, but only holds when~$m-n$, $m+n \geq
0$.  Using~\eqref{eq:pmn-relations}, however, we can always reduce to this case.

Finally, since~$U_X(\beta) = U(\cos\beta)$, using~\eqref{eq:trig-identities} we
get the following formula for $\beta \in [0, \pi]$:
\begin{multline*}%
  P_{mn}^k(\cos\beta) = \biggl(\frac{(k-m)!(k+m)!}{(k-n)!(k+n)!}\biggr)^{1/2}
  \sin^{m-n}(\beta/2) \cos^{m+n}(\beta/2)\\
  {}\cdot P_{k-m}^{(m-n, m+n)}(\cos\beta).
\end{multline*}
Together with~\eqref{eq:UZ-entry} this gives the~$(m, n)$ matrix entry for a
general rotation:
\begin{equation}%
  \label{eq:rhok-entries}
  \rho_k(U(\alpha, \beta, \gamma))_{mn} =
  i^{m-n} e^{i(m\gamma + n\alpha)} P_{mn}^k(\cos\beta).
\end{equation}

We now have the complete picture of the representation theory of~$\suni(2)$.
In~\S\ref{sec:sorto3-reps} we have seen how the representation theory
of~$\sorto(3)$ is related to that of~$\suni(2)$.  Indeed, let~$\Phi$ be the
surjective homomorphism of Theorem~\ref{thm:suni-sorto-homomorphism}.  For
integer~$k \geq 0$ and~$U \in \suni(2)$, let
\[
  \tau_k(\Phi(U)) = \rho_k(U).
\]
Since~$k$ is integer we have~$\rho_k(-I) = \rho_k(I)$, so~$\tau_k$ is well
defined and hence a representation of~$\sorto(3)$ on~$V_k$.  We observed
in~\S\ref{sec:sorto3-reps} that the representations~$\tau_k$ for integer~$k \geq
0$ form a complete list of nonequivalent irreducible representations
of~$\sorto(3)$.

Since~$\Phi(U_X(\theta)) = R_X(\theta)$ and~$\Phi(U_Z(\theta)) = R_Z(\theta)$ we
have
\[
  \Phi(U(\alpha, \beta, \gamma)) = R(\alpha, \beta, \gamma),
\]
and so we get a formula for the matrix entries of~$\tau_k$ directly
from~\eqref{eq:rhok-entries}.

For a function~$f$ with domain~$\sorto(3)$, let~$f(\alpha, \beta, \gamma) =
f(R(\alpha, \beta, \gamma))$ for short.  We get the following specialization of
Bochner's theorem (Theorem~\ref{thm:Bochner}):

\begin{theorem}%
  \label{thm:Bochner-specialized}
  A function~$f\colon \sorto(3)^2 \to \C$ is continuous and of positive type if
  and only if there are Hermitian positive semidefinite matrices~$F_k$ for~$k
  \geq 0$ with~$\sum_{k\geq 0} \trace F_k < \infty$ such that
  \begin{equation}%
    \label{eq:sorto3-bochner}
    f(\alpha, \beta, \gamma) = \sum_{k \geq 0} \sum_{m, n = -k}^k
    \overline{(F_k)_{mn}}\, i^{m-n} e^{i(m\gamma + n\alpha)}
    P_{mn}^k(\cos\beta),
  \end{equation}
  in which case the series converges absolutely and uniformly.
\end{theorem}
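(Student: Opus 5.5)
The plan is to specialize Theorem~\ref{thm:Bochner} to $G = \sorto(3)$, using the classification of $\widehat{\sorto(3)}$ from \S\ref{sec:sorto3-reps} and the matrix-entry formula~\eqref{eq:rhok-entries}. (The domain in the statement should be read as $\sorto(3)$ rather than $\sorto(3)^2$, since $f$ is a function of the Euler angles of a single rotation.)

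First, I will use the observation at the end of \S\ref{sec:sorto3-reps} and the subsequent discussion. Via the homomorphism $\Phi$ of Theorem~\ref{thm:suni-sorto-homomorphism}, the representations $\tau_k$, for integer $k \geq 0$, form a complete list of pairwise nonequivalent irreducible representations of $\sorto(3)$. Each $\tau_k$ is unitary with respect to the invariant inner product on $V_k$, for which the $e_n$ form an orthonormal basis, so $\tau_k$ is given by unitary matrices in that basis. Thus $\widehat{\sorto(3)} = \{\tau_k : k \in \Z,\ k \geq 0\}$. Theorem~\ref{thm:Bochner} then says that $f$ is continuous and of positive type if and only if
\[
  f(S) = \sum_{k\geq 0} \langle F_k, \tau_k(S)\rangle
\]
for some Hermitian positive-semidefinite matrices $F_k$ with $\sum_k \trace F_k < \infty$, and that the series then converges absolutely and uniformly.

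Second, I will make the formula explicit. Write $S = R(\alpha,\beta,\gamma)$. Since $\Phi(U(\alpha,\beta,\gamma)) = R(\alpha,\beta,\gamma)$, we get $\tau_k(S) = \rho_k(U(\alpha,\beta,\gamma))$. By definition of the trace inner product, $\langle F_k, \tau_k(S)\rangle = \sum_{m,n} \overline{(F_k)_{mn}}\, \tau_k(S)_{mn}$. Substituting~\eqref{eq:rhok-entries} for the entries gives exactly~\eqref{eq:sorto3-bochner}.

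The only delicate point is that Euler angles are not unique when $\beta \in \{0,\pi\}$, so I should check that the right-hand side does not depend on the choice of angles there. This follows because the expression equals $\sum_k \langle F_k, \tau_k(S)\rangle$, which depends only on $S$. The one remaining thing to confirm is that~\eqref{eq:rhok-entries} holds for every Euler triple, including the range of $\gamma$ and the sign ambiguity $U$ versus $-U$. For integer $k$ this ambiguity is harmless, because $\rho_k(-U) = \rho_k(U)$. I expect this bookkeeping to be the only real obstacle; everything else is direct substitution into Bochner's theorem.
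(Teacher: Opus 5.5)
Your proposal is correct and follows the paper's route: the paper obtains this theorem simply by specializing Theorem~\ref{thm:Bochner} to $\widehat{\sorto(3)} = \{\tau_k : k \geq 0\}$ and substituting the matrix entries~\eqref{eq:rhok-entries} via $\Phi(U(\alpha,\beta,\gamma)) = R(\alpha,\beta,\gamma)$. Your extra remarks, on reading the domain as $\sorto(3)$ rather than $\sorto(3)^2$ and on the right-hand side not depending on the choice of Euler angles (including the $\pm U$ ambiguity for integer $k$), are bookkeeping the paper leaves implicit, and you handle them correctly.
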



\section{Computing the theta number via trigonometric sums of squares}%
\label{sec:sos}

In~\S\ref{sec:harmonic} we have seen how to parameterize a function~$f\colon
\sorto(3) \to \C$ of positive type.  In this section we will see how to use this
parametrization together with trigonometric sums of squares to transform the
problem of finding a feasible solution of~\eqref{opt:theta-func} into a
semidefinite programming problem, leading to the results presented
in~\S\ref{sec:results}.

Our focus on packings of regular spherical polygons is essential for two main
reasons.  First, because it allows us to use the concept of admissibility
(Definition~\ref{def:admissible}) to transform our optimization problem into a
trigonometric sum-of-squares problem and then into a semidefinite programming
problem.  Second, because the symmetry of the polygons can be exploited to
reduce the size of the resulting semidefinite program, leading to
computationally tractable problems.

A standard regular $N$-gon (see~\S\ref{sec:notation} for definitions) is
invariant under the action of the subgroup of~$\sorto(3)$ generated by the
rotations
\[
  R_Z(2k\pi/N)\qquad\text{for~$k = 0$, \dots,~$N - 1$},
\]
which is isomorphic to~$\Z_N$; we henceforth identify this subgroup with~$\Z_N$
via the bijection~$k \leftrightarrow R_Z(2k\pi/N)$.  The standard regular
$N$-gon is also invariant under the action of~$\Z_2$ that flips the sign of the
first coordinate, that is, for~$x = (x_1, x_2, x_3)$ the action is
\[
  0 \cdot x = x\qquad\text{and}\qquad 1 \cdot x = (-x_1, x_2, x_3).
\]


\subsection{Invariance under~$\Z_N^2$ and~$\Z_2^3$}%
\label{sec:f-invariance}

The group~$\Z_N^2$ acts on continuous functions on~$\sorto(3)$ by
\[
  ((k, \ell) \cdot f)(S) = f(R_Z(2k\pi/N)^{-1} S R_Z(2\ell\pi/N)).
\]

Let~$K$ be a standard regular $N$-gon.  The invariance of~$K$ under~$\Z_N$
implies that, for~$S \in \sorto(3)$,
\[
  \begin{split}
    &K^\circ \cap R_Z(2k\pi/N)^{-1} S R_Z(2\ell\pi/N)
    K^\circ = \emptyset\\
    &\qquad\iff\quad R_Z(2k\pi/N) K^\circ \cap S R_Z(2\ell\pi/N) K^\circ =
    \emptyset\\
    &\qquad\iff\quad K^\circ \cap S K^\circ = \emptyset.
  \end{split}
\]
It follows that if~$f$ is a feasible solution of~\eqref{opt:theta-func}, then so
is~$(k, \ell) \cdot f$ for all~$(k, \ell) \in \Z_N^2$.  We can then average over
the group to get an invariant solution with the same objective value, namely
\[
  \frac{1}{N^2} \sum_{(k, \ell) \in \Z_N^2} (k, \ell) \cdot f.
\]

So we see that we can restrict ourselves in~\eqref{opt:theta-func}
to~$\Z_N^2$-invariant functions without sacrificing the quality of the bound.
This restriction leads to a reduction on the size of the coefficient matrices we
need to express~$f$ as in~\eqref{eq:sorto3-bochner}.  Indeed, fix integers~$m$
and~$n$ and do the averaging for the exponential term
in~\eqref{eq:sorto3-bochner} to get
\[
  \frac{1}{N^2} \sum_{(k, \ell) \in \Z_N^2} e^{i(m(\gamma - 2k\pi/N) + n(\alpha
  + 2\ell\pi/N))} = e^{i(m\gamma + n\alpha)} \frac{1}{N^2} \sum_{(k, \ell) \in
  \Z_N^2} e^{-2mk\pi i/N} e^{2n\ell\pi i/N}.
\]
This equals~$0$ unless both~$m$ and~$n$ are multiples of~$N$, in which case the
result is~$e^{i(m\gamma + n\alpha)}$.

The conclusion is that we can restrict~$m$ and~$n$ in the inner sum
in~\eqref{eq:sorto3-bochner} to run over multiples of~$N$.  For integer~$k \geq
0$ and Euler angles~$\alpha$, $\beta$, and~$\gamma$, let~$Y_k(\alpha, \beta,
\gamma)$ be the matrix indexed by all multiples of~$N$ in~$[-k, k]$ given by
\[
  Y_k(\alpha, \beta, \gamma)_{mn} = i^{m-n} e^{i(m\gamma + n\alpha)}
  P_{mn}^k(\cos\beta).
\]
With this notation, the expression~\eqref{eq:sorto3-bochner} for a
$\Z_N^2$-invariant function~$f$ becomes
\[
  f(\alpha, \beta, \gamma) = \sum_{k \geq 0} \langle F_k, Y_k(\alpha, \beta,
  \gamma)\rangle.
\]

Euler rotations satisfy the relations
\[
  \begin{split}
    R(\alpha, \beta, \gamma)^{-1} &= R(-\gamma, -\beta, -\alpha) = R(\pi - \gamma,
    \beta, \pi - \alpha),\\
    R(\alpha, \beta, \gamma) &= R(\alpha - \pi, -\beta, \gamma - \pi),
  \end{split}
\]
as can be checked via trigonometric identities.  A function~$f\colon \sorto(3)
\to \R$ of positive type satisfies~$f(S^{-1}) = f(S)$, and so with the
identities above we get the relations
\begin{equation}%
  \label{eq:f-real-relations}
    f(\alpha, \beta, \gamma) = f(\alpha - \pi, -\beta, \gamma - \pi) = f(-\gamma,
    -\beta, -\alpha) = f(\pi - \gamma, \beta, \pi - \alpha).
\end{equation}

Let~$M$ be the matrix of the linear transformation~$(x_1, x_2, x_3) \mapsto
(-x_1, x_2, x_3)$.  We have seen that the standard regular $N$-gon~$K$ is
invariant under~$M$, that is,~$MK = K$.  So, in analogy to the case of~$\Z_N^2$
discussed above, we get, for~$S \in \sorto(3)$,
\[
  K^\circ \cap M S M K^\circ = \emptyset\qquad\iff
  \qquad K^\circ \cap S K^\circ = \emptyset.
\]
This allows us to restrict~\eqref{opt:theta-func} further by requiring~$f$ to be
invariant under the action of~$\Z_2$ given by
\[
  (0 \cdot f)(S) = f(S)\qquad\text{and}\qquad (1 \cdot f)(S) = f(MSM).
\]
Since $MR_X(\beta)M = R_X(\beta)$ and so
\[
  \begin{split}
    MR(\alpha, \beta, \gamma)M &= MR_Z(\gamma) R_X(\beta) R_Z(\alpha)M\\
    &= M R_Z(\gamma) M R_X(\beta) M R_Z(\alpha) M\\
    &= R(-\alpha, \beta, -\gamma),
  \end{split}
\]
this is the same as imposing the relation
\begin{equation}%
  \label{eq:f-z2-relation}
  f(\alpha, \beta, \gamma) = f(-\alpha, \beta, -\gamma).
\end{equation}

Combining~\eqref{eq:f-real-relations} and~\eqref{eq:f-z2-relation} we see
that~$f$ can be assumed to be invariant under the action of~$\Z_2^3$ generated
by
\begin{equation}%
  \label{eq:z3-action}
  \begin{split}
    ((1, 0, 0) \cdot f)(\alpha, \beta, \gamma) &= f(-\alpha, \beta, -\gamma),\\
    ((0, 1, 0) \cdot f)(\alpha, \beta, \gamma) &= f(-\gamma, -\beta, -\alpha),\\
    ((0, 0, 1) \cdot f)(\alpha, \beta, \gamma) &= f(\pi - \gamma, \beta, \pi -
    \alpha).
  \end{split}
\end{equation}

Since~$Y_k(\pi - \gamma, \beta, \pi - \alpha) = Y_k(-\gamma, -\beta, -\alpha) =
Y_k(\alpha, \beta, \gamma)^*$, by setting
\[
  \overline{Y}_k(\alpha, \beta, \gamma) = (1/4)(Y_k(\alpha, \beta, \gamma) +
  Y_k(\alpha, \beta, \gamma)^* + Y_k(-\alpha, \beta, -\gamma) + Y_k(-\alpha,
  \beta, -\gamma)^*)
\]
we get the following version of Theorem~\ref{thm:Bochner-specialized}.

\begin{theorem}%
  \label{thm:Bochner-invariance}
  A function~$f\colon \sorto(3)^2 \to \R$ is continuous, invariant under the
  action of both~$\Z_N^2$ and~$\Z_2^3$, and of positive type if
  and only if there are Hermitian positive semidefinite matrices~$F_k$ for~$k
  \geq 0$ with~$\sum_{k\geq 0} \trace F_k < \infty$ such that
  \[
    f(\alpha, \beta, \gamma) = \sum_{k \geq 0} \langle F_k,
    \overline{Y}_k(\alpha, \beta, \gamma)\rangle,
  \]
  in which case the series converges absolutely and uniformly.
\end{theorem}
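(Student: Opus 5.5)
The plan is to reduce to Theorem~\ref{thm:Bochner-specialized} and handle the two symmetry groups by averaging, which also gives the converse directions.

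\emph{Sufficiency.} Suppose $f=\sum_k\langle F_k,\overline{Y}_k\rangle$ with $F_k\succeq 0$ and $\sum_k\trace F_k<\infty$. Each $Y_k(\alpha,\beta,\gamma)$ is a principal submatrix of $\tau_k(R(\alpha,\beta,\gamma))$, namely its rows and columns indexed by multiples of~$N$. So I will extend $F_k$ by zeros to a positive semidefinite matrix on all indices $-k,\dots,k$. Then each of the four terms in $\overline{Y}_k$ is of the form $\langle F_k,\tau_k(g(S))\rangle$ or its conjugate, where $g$ is one of the maps $S\mapsto S$, $S\mapsto S^{-1}$, $S\mapsto MSM$, $S\mapsto MS^{-1}M$.

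I then use two facts:
\begin{itemize}
\item $\langle F,\tau_k(S^{-1})\rangle=\langle F,\tau_k(S)^*\rangle=\overline{\langle F,\tau_k(S)\rangle}$, because $F$ is Hermitian.
\item $S\mapsto \tau_k(MSM)$ is a unitary representation, so each term is a positive type function in the sense of Theorem~\ref{thm:Bochner}, by the easy direction proved after that theorem.
\end{itemize}
Sums of positive type functions are of positive type, and uniform convergence gives continuity. Realness follows because $f$ is the average of a function and its conjugate.

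For invariance, the $\Z_2^3$-invariance follows from the relations~\eqref{eq:f-real-relations} and~\eqref{eq:f-z2-relation}: the four terms are permuted by the generators~\eqref{eq:z3-action}, using $Y_k(\pi-\gamma,\beta,\pi-\alpha)=Y_k(-\gamma,-\beta,-\alpha)=Y_k^*$. The $\Z_N^2$-invariance follows because $m,n\in N\Z$ kill the phases $e^{-2\pi imk/N}$ and $e^{2\pi in\ell/N}$. The point I need to check is that conjugating by~$M$ preserves the index set, which holds since $\alpha\mapsto-\alpha$ only changes the phases.

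\emph{Necessity.} Let $f$ be real, continuous, of positive type and invariant. By Theorem~\ref{thm:Bochner-specialized}, $f=\sum_k\langle F_k,\tau_k\rangle$ with $F_k\succeq 0$. Invariance lets me replace $f$ by its average over $\Z_N^2\times\Z_2^3$ without changing it. I will carry out the averaging termwise in the series, which is legitimate by absolute and uniform convergence:
\begin{itemize}
\item The $\Z_N^2$ average replaces $F_k$ by its compression $F_k'$ to indices in $N\Z$, as computed just before the theorem. This compression is still positive semidefinite and has trace at most $\trace F_k$.
\item Averaging over the three $\Z_2$ generators produces exactly $\langle F_k',\overline{Y}_k\rangle$, using $\langle F,Y^*\rangle=\overline{\langle F,Y\rangle}$ and $f=\overline f$.
\end{itemize}

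The main obstacle is the $M$-conjugation term. I must verify that $\langle F,Y_k(-\alpha,\beta,-\gamma)\rangle$ can be written as $\langle F',Y_k(\alpha,\beta,\gamma)\rangle$ for some positive semidefinite $F'$, or directly that it is of positive type in $S$, which is what the representation $S\mapsto\tau_k(MSM)$ gives.

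If I instead want all terms against the same matrix $F_k$, I would use the explicit identity $P^k_{mn}=(-1)^{m-n}P^k_{-m,-n}$ from~\eqref{eq:pmn-relations}. With it, $Y_k(-\alpha,\beta,-\gamma)=DJ\,Y_k(\alpha,\beta,\gamma)\,JD$, where $J$ is the index reversal $m\mapsto -m$ and $D=\mathrm{diag}((-1)^m)$. So the term corresponds to the positive semidefinite matrix $JDF_kDJ$, which is again supported on multiples of~$N$.

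Averaging such matrices keeps them positive semidefinite. If needed, I will symmetrize the $F_k$ so that a single family $F_k$ pairs with $\overline{Y}_k$.
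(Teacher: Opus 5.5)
Your proposal is correct and follows essentially the paper's argument: expand $f$ by Theorem~\ref{thm:Bochner-specialized}, average over $\Z_N^2$ to compress $F_k$ to the indices in $N\Z$, then average over $\Z_2^3$ using $Y_k(\pi-\gamma,\beta,\pi-\alpha)=Y_k(\alpha,\beta,\gamma)^*$; your check of the converse through the unitary representation $S\mapsto\tau_k(MSM)$ supplies the direction the paper leaves implicit. The closing ``main obstacle'' discussion is unnecessary, because the $\Z_2^3$ average already gives $\langle F_k',\overline{Y}_k\rangle$ directly; also, the identity there should read $Y_k(-\alpha,\beta,-\gamma)=J\,Y_k(\alpha,\beta,\gamma)\,J$, since the extra signs $D$ are wrong for odd $N$, though this changes nothing because $JF_kJ$ is positive semidefinite as well.
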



\subsection{Trigonometric sums of squares}%
\label{sec:trig-sos}

Given variables~$\theta = (\theta_1, \ldots, \theta_n)$, denote by~$\C\llbracket
\theta_1, \ldots, \theta_n\rrbracket$ the ring of polynomials on~$e^{\pm
i\theta_k}$, that is,
\[
  \C\llbracket\theta_1, \ldots, \theta_n\rrbracket = \C[e^{i\theta_1},
  e^{-i\theta_1}, \ldots, e^{i\theta_n}, e^{-i\theta_n}].
\]
An element~$p$ of~$\C\llbracket \theta_1, \ldots, \theta_n\rrbracket$ is an
expression of the form
\[
  p(\theta_1, \ldots, \theta_n) = \sum_{k \in \Z^n} a_k e^{i k \cdot \theta},
\]
where the coefficients~$a_k$ are complex numbers and only finitely many of them
are nonzero.  If~$a_{-k} = \overline{a_k}$ for all~$k$, then~$p$ is called a
\defi{(Hermitian) trigonometric polynomial}.  This condition on the coefficients
implies that a trigonometric polynomial is real valued. Both~$\cos\theta$
and~$\sin\theta$ are trigonometric polynomials, since
\[
  \cos\theta = (1/2)(e^{i\theta} + e^{-i\theta})\qquad\text{and}\qquad
  \sin\theta = (i/2)(e^{-i\theta} - e^{i\theta}).
\]

The \defi{degree} of~$\theta_i$ in the polynomial~$p$ is
\[
  \deg_i p = \max\{\, |k_i| : k \in \Z^n,\ a_k \neq 0\,\}.
\]
The \defi{degree} of~$p$ is~$\deg p = (\deg_1 p, \ldots, \deg_n p)$.

A trigonometric polynomial~$\sigma \in \C\llbracket \theta_1, \ldots,
\theta_n\rrbracket$ is a \defi{sum of squares (sos)} if there are
polynomials~$q_k \in \C\llbracket \theta_1, \ldots, \theta_n\rrbracket$ for~$k =
1$, \dots,~$N$ such that
\[
  \sigma(\theta) = \sum_{k=1}^N q_k(\theta) \overline{q_k(\theta)}.
\]
Note that a trigonometric sos polynomial is always nonnegative.

Equivalently,~$\sigma$ is a trigonometric sos polynomial if there is a vector~$v
= v(\theta)$ whose entries are polynomials in~$\C\llbracket \theta_1, \ldots,
\theta_n\rrbracket$ and a Hermitian positive-semidefinite matrix~$Q$ such that
\[
  \sigma(\theta) = \langle Q, v(\theta) v(\theta)^*\rangle.
\]
This is a \defi{Gram-matrix representation} of~$\sigma$. For instance, if~$v$
consists of all monomials~$e^{i k \cdot \theta}$ for~$0 \leq k \leq \delta$ and
some~$\delta \in \Z^n$, then
\[
  \sigma(\theta) = \sum_{0 \leq k, \ell \leq \delta} \overline{Q_{k\ell}} e^{i
  k\cdot \theta} e^{-i \ell\cdot \theta}.
\]

Given a domain~$\Delta = \{\, \theta : g_1(\theta) \geq 0, \ldots, g_N(\theta)
\geq 0\,\}$ for some trigonometric polynomials~$g_i$, if the trigonometric
polynomial~$p$ can be written as
\[
  p = \sigma_0 + \sum_{i=1}^N g_i \sigma_i,
\]
where the~$\sigma_i$ are trigonometric sos polynomials, then~$p$ is nonnegative
on~$\Delta$.

A reference on trigonometric sos polynomials is the book by
Dumitrescu~\cite{Dumitrescu2017}.  Because we need to exploit the symmetry of
the resulting optimization problem, the definitions given above are slightly
more general than those in Dumitrescu's book.


\subsection{A sum-of-squares formulation}%
\label{sec:sos-formulation}

Let~$K$ be the standard regular $N$-gon with angular radius~$\varrho$. To
solve~\eqref{opt:theta-func} for~$K$, we use
Theorem~\ref{thm:Bochner-invariance} to specify~$f$ by fixing an integer~$d \geq
0$ and setting
\begin{equation}%
  \label{eq:f-exp}
  f(\alpha, \beta, \gamma) = \sum_{k=0}^d \langle F_k, \overline{Y}_k(\alpha,
  \beta, \gamma)\rangle
\end{equation}
for Hermitian positive-semidefinite matrices~$F_k$ of appropriate size.  The
function~$f$ defined above is in fact a trigonometric polynomial, allowing us to
use a sum-of-squares approach to model the constraints
of~\eqref{opt:theta-func}.  Indeed,~$f$ is a trigonometric polynomial
on~$\alpha$, $\beta$, and~$\gamma$ of degree~$(N\floor{d / N}, d, N\floor{d /
N})$ (recall from~\S\ref{sec:f-invariance} how the symmetry of the $N$-gon being
packed affects the definition of~$f$).

We want to use trigonometric sos to model the constraint
\begin{equation}%
  \label{eq:edge-constraint}
  f(S) \leq -1\quad\text{for~$S \in \sorto(3)$ such that~$K^\circ \cap S K^\circ
  = \emptyset$.}
\end{equation}
To this end we need a characterization of the set
\[
  \Delta = \{\, (\alpha, \beta, \gamma) : K^\circ \cap R(\alpha, \beta, \gamma)
  K^\circ = \emptyset\,\}
\]
in terms of trigonometric polynomials.  Let~$a_i^\tp x \leq 0$, for~$i = 0$,
\dots,~$N - 1$, be the inequalities inducing the facets of~$K$, so that
\[
  K = \{\, x \in \R^3 : a_i^\tp x \leq 0\text{ for~$i = 0$, \dots,~$N - 1$}\,\}.
\]
If~$S \in \sorto(3)$, then the facets of~$SK$ are induced by the
inequalities~$(Sa_i)^\tp x \leq 0$ for~$i = 0$, \dots,~$N - 1$.

To get a representation of~$\Delta$, we assume that~$K$ is admissible. The set
of all Euler angles for which condition~(i) of Definition~\ref{def:admissible}
holds is
\[
\Sigma_1 = \{\, (\alpha, \beta, \gamma) : \cos(2\varrho) - \cos\beta \geq
0\,\}.
\]
Similarly, the sets of Euler angles for which conditions~(ii) and~(iii) of
Definition~\ref{def:admissible} hold are, respectively,
\[
  \begin{split}
    \Sigma_2 &= \bigcup_{i=0}^{N-1}
    \{\, (\alpha, \beta, \gamma) : a_i^\tp R(\alpha, \beta, \gamma) u_j \geq
    0\text{ for~$j = 0$, \dots,~$N - 1$}\,\}\quad\text{and}\\
    \Sigma_3 &= \bigcup_{i=0}^{N-1}
    \{\, (\alpha, \beta, \gamma) : (R(\alpha, \beta, \gamma) a_i)^\tp u_j \geq
    0\text{ for~$j = 0$, \dots,~$N - 1$}\,\},
  \end{split}
\]
where~$u_0$, \dots,~$u_{N-1}$ are the vertices of~$K$.  We then have~$\Delta =
\Sigma_1 \cup \Sigma_2 \cup \Sigma_3$.

This description of~$\Delta$ allows us to use the techniques
of~\S\ref{sec:trig-sos} to model~\eqref{eq:edge-constraint} in terms of
trigonometric sos polynomials.  For instance, the constraint on~$\Sigma_1$ can
be expressed as the trigonometric sos constraint
\begin{equation}%
  \label{eq:sigma1-constraint}
  f(\alpha, \beta, \gamma) + \sigma_0(\alpha, \beta, \gamma) + (\cos(2\varrho) -
  \cos\beta)\sigma_1(\alpha, \beta, \gamma) = -1,
\end{equation}
where~$\sigma_0$ and~$\sigma_1$ are trigonometric sos polynomials.  The
constraint on~$\Sigma_2$ translates into~$N$ identities like above, each
involving~$N + 1$ trigonometric sos polynomials, and similarly for~$\Sigma_3$.
Hence we can generate a semidefinite programming problem whose feasible
solutions give upper bounds for the packing number of~$K$.

The invariance relations satisfied by~$f$ reduce the number of trigonometric sos
constraints we need.  Indeed, we have
\[
  (\alpha, \beta, \gamma) \in \Sigma_3\qquad\iff\qquad
  (-\gamma, -\beta, -\alpha) \in \Sigma_2,
\]
and since~$f$ satisfies~\eqref{eq:f-real-relations} it suffices to
ensure~\eqref{eq:edge-constraint} on~$\Sigma_1$ and~$\Sigma_2$.

The constraint on~$\Sigma_2$ can also be simplified.  Indeed, let
\[
  \Sigma_2' = \{\, (\alpha, \beta, \gamma) : a_0^\tp R(\alpha, \beta, \gamma)
  u_j \geq 0\text{ for~$j = 0$, \dots,~$N - 1$}\,\}.
\]
The invariance of~$K$ under~$\Z_N$ implies that the action of~$\Z_N$
on~$\Sigma_2'$ gives the set~$\Sigma_2$, and so since~$f$ is $\Z_N^2$-invariant,
it suffices to ensure~\eqref{eq:edge-constraint} on~$\Sigma_1$ and~$\Sigma_2'$,
reducing the number of trigonometric sos constraints to~$2$.

This greatly reduces the size of the resulting semidefinite program, but still
not enough to make it tractable.  The problem here is that the degrees of the
trigonometric sos polynomials are still too high, and that the matrices needed
to specify them, which are the variables of our optimization problem, are
complex.  Semidefinite programming solvers work with real matrices, and the
reduction of complex to real variable matrices doubles the size of each matrix.

This leads us to our next goal, namely exploiting the problem's invariance in
the construction of the trigonometric sos constraints.


\subsection{Invariant trigonometric sos: warm up}%
\label{sec:warmup}

The theory of invariant semidefinite programs, and in particular of invariant
sums of squares, is by now well developed~\cite{BachocGSV2012, GatermannP2004}.
Invariant trigonometric sos work similarly, but with some notable differences.
In this section, we will consider the case of univariate polynomials:
understanding this simpler situation will make the developments of the next
section, where we tackle our original problem, much more natural.
In~\S\ref{sec:sos-compare} we will explore the differences between
trigonometric and classical invariant sos.

Let~$p \in \C\llbracket\theta\rrbracket$ be a trigonometric polynomial on the
variable~$\theta$.  Below,~$z = e^{i\theta}$ is sometimes used for brevity.  The
group~$\Z_N$ acts on~$p$ by
\[
  (k \cdot p)(\theta) = p(\theta + 2k\pi/N).
\]
Let~$v(\theta) = (e^{ir\theta} : r = -d, \ldots, d)$ and say~$p$ is a
trigonometric sos with the Gram-matrix representation~$\langle Q, v(\theta)
v(\theta)^*\rangle$.  If~$p$ is $\Z_N$-invariant, then
\[
  \begin{split}
    p(\theta) &= \frac{1}{N}\sum_{k \in \Z_N} (k\cdot p)(\theta)\\
    &= \sum_{r,s=-d}^d \overline{Q_{rs}} \frac{1}{N}\sum_{k \in \Z_N}
    e^{i(r-s)(\theta + 2k\pi/N)}\\
    &= \sum_{r,s=-d}^d \overline{Q_{rs}} e^{i(r-s)\theta} \frac{1}{N}\sum_{k \in
    \Z_N} e^{i(r-s)2k\pi/N}.
  \end{split}
\]
The inner sum above equals~$1$ if~$r\equiv s \pmod{N}$ and~$0$ otherwise.  The
conclusion is that the matrix~$Q$ can be split into~$N$ diagonal blocks~$Q_k$,
for~$k \in \Z_N$, with rows and columns indexed by all~$r$ such that~$r \equiv k
\pmod{N}$.

As an example, say~$N = 3$ and~$d = 6$.  The blocks of the
matrix~$v(\theta)v(\theta)^*$ corresponding to~$k = 0$ and~$1$ are
\[
  \begin{blockarray}{cccccc}
    & z^6 & z^3 & 1 & z^{-3} & z^{-6}\\
    \begin{block}{c[ccccc]}
      z^{-6} & 1 & z^{-3} & z^{-6} & z^{-9} & z^{-12}\\
      z^{-3} & z^3 & 1 & z^{-3} & z^{-6} & z^{-9}\\
      1      & z^6 & z^3 & 1 & z^{-3} & z^{-6}\\
      z^3    & z^9 & z^6 & z^3 & 1 & z^{-3}\\
      z^6    & z^{12} & z^9 & z^6 & z^3 & 1\\
    \end{block}
  \end{blockarray}
  \qquad\text{and}\qquad
  \begin{blockarray}{ccccc}
    & z^5 & z^2 & z^{-1} & z^{-4}\\
    \begin{block}{c[cccc]}
      z^{-5} & 1 & z^{-3} & z^{-6} & z^{-9}\\
      z^{-2} & z^3 & 1 & z^{-3} & z^{-6}\\
      z      & z^6 & z^3 & 1 & z^{-3}\\
      z^4    & z^9 & z^6 & z^3 & 1\\
    \end{block}
  \end{blockarray}\,.
\]
Note that the second block is a principal submatrix of the first one.  The same
happens with all further blocks.  Hence, instead of using~$N$ blocks, we need
only a single block: we have~$p(\theta) = \langle Q,
v(\theta)v(\theta)^*\rangle$, where
\[
  v(\theta) = (e^{ir\theta} : r = -d, \ldots, d,\ r \equiv 0\pmod{N}).
\]
In other words, working with $\Z_N$-invariant polynomials amounts to working in
the ring~$\C\llbracket N\theta\rrbracket$. This reduces the size of the
matrix~$Q$ by a factor of~$N$.

The group~$\Z_2$ acts on~$p$ by
\[
  (0 \cdot p)(\theta) = p(\theta)\qquad\text{and}\qquad
  (1 \cdot p)(\theta) = p(-\theta).
\]
This action, flipping the sign of the variable, is different from the additive
action we considered before.  To get a block decomposition here, we adapt the
approach of Gatermann and Parrilo~\cite{GatermannP2004}, using a projection
operator~\cite[\S2.6, Theorem~8]{Serre1977} to get a symmetry-adapted basis.

Say~$G$ is an Abelian group acting on~$\C\llbracket\theta\rrbracket$.  The
projection operator associated with an irreducible character~$\chi$ of~$G$ is
the operator~$\Pi_\chi\colon \C\llbracket\theta\rrbracket \to
\C\llbracket\theta\rrbracket$ such that
\[
  \Pi_\chi p = \frac{1}{|G|} \sum_{g \in G} \overline{\chi(g)} (g \cdot p).
\]
The group~$\Z_2$ is Abelian and its irreducible characters are
\[
  \chi_0(g) = 1\qquad\text{and}\qquad
  \chi_1(g) = (-1)^g
\]
for~$g \in \Z_2$.  We get the projection operators
\[
  (\Pi_0 p)(\theta) = (1/2)(p(\theta) +
  p(-\theta))\qquad\text{and}\qquad
  (\Pi_1 p)(\theta) = (1/2)(p(\theta) - p(-\theta)).
\]

Let~$V$ be the subspace generated by the polynomials~$z^{-d}$, \dots,~$z^d$.
Applying the operators above to these polynomials decomposes~$V$ as a direct sum
of the spaces~$V_0$ and~$V_1$ with bases
\[
  \begin{array}{ll}
    z^k + z^{-k}&\text{for~$k = 0$, \dots,~$d$ and}\\
    z^k - z^{-k}&\text{for~$k = 1$, \dots,~$d$,}
  \end{array}
\]
respectively.  Together, these bases form the symmetry-adapted basis for the
action.  Let~$v(\theta)$ be the vector obtained by concatenating these two
bases.

Say~$p$ has the Gram-matrix representation~$p(\theta) = \langle Q, v(\theta)
v(\theta)^*\rangle$.  If~$p$ is $\Z_2$-invariant, then
\[
  p(\theta) = (1/2)(p(\theta) + p(-\theta)) = \langle Q,
  (1/2)(v(\theta)v(\theta)^* + v(-\theta)v(-\theta)^*)\rangle.
\]
The matrix $v(\theta)v(\theta)^* + v(-\theta)v(-\theta)^*$ is block-diagonal:
only entries with row and column both corresponding to basis elements of~$V_0$
or~$V_1$ are nonzero.  So instead of having a single $(2d+1) \times (2d+1)$
block, we have a representation for~$p$ with two smaller blocks of sizes~$(d+1)
\times (d+1)$ and~$d \times d$.  Namely we let
\[
  \begin{split}
    v_0(\theta) &= (z^k + z^{-k} : k = 0, \ldots, d),\\
    v_1(\theta) &= (z^k - z^{-k} : k = 1, \ldots, d)
  \end{split}
\]
to get
\[
  p(\theta) = \langle Q_0, v_0(\theta) v_0(\theta)^*\rangle + \langle
  Q_1, v_1(\theta) v_1(\theta)^*\rangle.
\]

A further simplification is possible.  The matrices~$v_k(\theta)v_k(\theta)^*$
above are real-valued for every~$\theta$, implying that the matrices~$Q_k$ can
be taken to be real as well.  This further reduces the complexity of the
problem, since in semidefinite programming solvers Hermitian
positive-semidefinite matrices are represented by real positive-semidefinite
matrices of twice the size.

It is now simple to combine both approaches to reduce the size of the
Gram-matrix representation of a polynomial that is invariant under both~$\Z_N$
and~$\Z_2$.  Indeed, $\Z_N$-invariance amounts to working in~$\C\llbracket
N\theta\rrbracket$ and all we need to do is apply the projection operators
for~$\Z_2$ to the polynomials~$z^k$ for all~$k = -d$, \dots,~$d$ that are
multiples of~$N$.


\subsection{Invariant trigonometric sos: setup}

The set~$\Sigma_1$ is given by the trigonometric polynomial~$\cos(2\varrho) -
\cos\beta$.  This polynomial is invariant under the actions of~$\Z_N^2$
and~$\Z_2^3$ described in~\S\ref{sec:f-invariance}.  This means that
in~\eqref{eq:sigma1-constraint} we can take~$\sigma_0$ and~$\sigma_1$ to be
invariant as well.

As in~\S\ref{sec:warmup}, the invariance under~$\Z_N^2$ is the simpler one to
enforce by working in the ring~$\C\llbracket N\alpha, \beta, N\gamma\rrbracket$.
The invariance under~$\Z_2^3$ is exploited by computing a symmetry-adapted basis
as in~\S\ref{sec:warmup}.  The group~$\Z_2^3$ is Abelian; its irreducible
characters are in bijection with~$\Z_2^3$: the character that corresponds to~$z
\in \Z_2^3$ is
\[
  \chi_z(g) = (-1)^{z_1 g_1 + z_2 g_2 + z_3 g_3}
\]
for~$g \in \Z_2^3$. The projection operator associated with this character is
\[
  \Pi_z p = \frac{1}{|\Z_2^3|} \sum_{g \in \Z_2^3} \overline{\chi_z(g)} (g \cdot
  p),
\]
where the action on~$p$ is given by~\eqref{eq:z3-action}.

Assume that both~$d$ and~$\floor{d / N}$ are even.  To represent a trigonometric
sos polynomial~$\sigma$ invariant under both~$\Z_N^2$ and~$\Z_2^3$ of
degree~$\delta = (N\floor{d / N}, d, N\floor{d / N})$, for each~$z \in \Z_2^3$
we apply the projection operator~$\Pi_z$ to all monomials
\[
  e^{i (k_1 N \alpha + k_2 \beta + k_3 N \gamma)}
\]
for~$k_1$, $k_3 = -\floor{d / N} / 2, \dots, \floor{d / N} / 2$ and~$k_2 = -d /
2$, \dots,~$d/2$ and compute a basis of this projection.  The elements of this
basis are the components of the vector~$v_\delta^z$.  The invariant
trigonometric sos polynomial~$\sigma$ can then be written as
\[
  \sigma(\alpha, \beta, \gamma) = \sum_{z \in \Z_2^3} \langle Q_z,
  v_\delta^z(\alpha, \beta, \gamma) v_\delta^z(\alpha, \beta, \gamma)^*\rangle
\]
for Hermitian positive-semidefinite matrices~$Q_z$ of appropriate sizes.  In
this way, we reduce from a single large block used to represent~$\sigma$ to~$8$
smaller blocks, one for each irreducible character of~$\Z_2^3$.  Moreover, the
invariance relation~\eqref{eq:f-z2-relation} implies that we can restrict the
matrices~$Q_z$ to be real without loss of generality.  This further reduces the
size of the matrix variables by half.

The domain~$\Sigma_2'$ is given by the trigonometric polynomials
\begin{equation}%
  \label{eq:sigma2-polys}
  a_0^\tp R(\alpha, \beta, \gamma) u_j \geq 0\qquad
  \text{for~$j = 0$, \dots,~$N - 1$}.
\end{equation}
The invariance of~$K$ implies that this domain is invariant under the action
of~$\Z_N$ on~$\alpha$, that is,
\[
  (\alpha, \beta, \gamma) \in \Sigma_2'\qquad\iff\qquad
  (\alpha + 2k\pi / N, \beta, \gamma) \in \Sigma_2'
\]
for all~$k = 0$, \dots,~$N - 1$.

Our specific choice of standard regular $N$-gon also implies that the domain is
invariant under the action of~$\Z_2^2$ generated by
\[
  \begin{split}
    (1, 0) \cdot (\alpha, \beta, \gamma) &= (-\alpha, \beta, -\gamma),\\
    (0, 1) \cdot (\alpha, \beta, \gamma) &= (\alpha - \pi, -\beta, \gamma -
    \pi).
  \end{split}
\]
Indeed, we know that~$R(\alpha, \beta, \gamma) = R(\alpha - \pi, -\beta, \gamma
- \pi)$.  Moreover, our choice of~$K$ implies that~$(a_0)_1 = 0$, and so~$Ma_0 =
a_0$, where~$M$ is the linear transformation~$(x_1, x_2, x_3) \mapsto (-x_1,
x_2, x_3)$.

The domain~$\Sigma_2'$ is itself invariant under the actions of~$\Z_N$
and~$\Z_2^2$ described above, but the polynomials in~\eqref{eq:sigma2-polys} are
not necessarily so.  Indeed, these polynomials are invariant under the action
of~$(0, 1) \in \Z_2^2$, but not under the action of~$(1, 0)$ or under the action
of~$\Z_N$.

We can replace these polynomials by invariant ones, however, by using the
following trick similar to an observation of Leijenhorst and de
Laat~\cite{LeijenhorstL2024}.  Let~$G$ be the subgroup of~$\Z_2^2$ generated
by~$(1, 0)$.  Note that the actions of~$G$ and~$\Z_N$ map polynomials
in~\eqref{eq:sigma2-polys} to each other; all these polynomials belong to the
same orbit under the action of~$\Z_N$.  For~$k = 1$, \dots,~$N$, consider the
polynomial
\[
  s_k(\alpha, \beta, \gamma) = \sum_{\substack{S \subseteq \{0, \ldots, N-1\}\\|S| =
  k}} \prod_{j \in S} a_0^\tp R(\alpha, \beta, \gamma) u_j.
\]
Note that~$s_k$ is invariant under~$\Z_2^2$ and~$\Z_N$.  Moreover,
\[
  \Sigma_2' = \{\,(\alpha, \beta, \gamma) : s_k(\alpha, \beta, \gamma) \geq
  0\text{ for~$k = 1$, \dots,~$N$}\,\}.
\]

We can now reduce the size of the Gram-matrix representation of the
trigonometric sos polynomials needed.  The action of~$\Z_N$ translates to
working in the ring~$\C\llbracket N\alpha, \beta, \gamma\rrbracket$.  The action
of~$\Z_2^2$ is dealt with as the action of~$\Z_2^3$ before.

Namely, the irreducible characters of~$\Z_2^2$ are in bijection with~$\Z_2^2$,
and for each~$z \in \Z_2^2$ we get a projection operator like~$\Pi_z$. To
represent a trigonometric sos polynomial~$\sigma$ invariant under both~$\Z_N$
and~$\Z_2^2$ of degree~$\delta = (N\floor{d/N}, d, N\floor{d/N})$, for each~$z
\in \Z_2^2$ we apply the projection operator to all monomials
\[
  e^{i(k_1 N\alpha + k2\beta + k_3\gamma)}
\]
for~$k_1 = -\floor{d/N} / 2$, \dots,~$\floor{d/N}/2$, $k_2 = -d/2$, \dots,~$d /
2$, and~$k_3 = -N\floor{d/N}/2$, \dots,~$N\floor{d/N}/2$.  A basis of the
resulting projection gives us a new vector of polynomials~$v_\delta^z$ and the
Gram-matrix representation of~$\sigma$ using~4 smaller blocks, namely
\[
  \sigma(\alpha, \beta, \gamma) = \sum_{z \in \Z_2^2} \langle Q_z,
  v_\delta^z(\alpha, \beta, \gamma) v_\delta^z(\alpha, \beta, \gamma)^*\rangle.
\]
Again, we can assume without loss of generality that the matrices~$Q_z$ are real
instead of complex.


\subsection{Invariance in classical vs.\ trigonometric sos polynomials}%
\label{sec:sos-compare}

Let~$p \in \C\llbracket\theta\rrbracket$ be a trigonometric sos polynomial
on~$\theta$.  In~\S\ref{sec:warmup} we have seen how a Gram-matrix
representation for~$p$ is simplified when~$p$ is $\Z_N$-invariant.  Namely, the
$\Z_N$-invariance amounts to working in~$\C\llbracket N\theta\rrbracket$, and
this leads to a reduction of the matrix size by a factor of~$N$.

With classical polynomials, the situation is different.  If an sos
polynomial~$p$ is invariant under the action of a group~$G$, then in general its
Gram-matrix representation decomposes into one block for each irreducible
representation of~$G$.  In contrast to the case of the $\Z_N$-invariance
of~\S\ref{sec:warmup}, the blocks are different and cannot be combined into a
single one.  With trigonometric polynomials we could simply work in the
invariant ring~$\C\llbracket N\theta\rrbracket$.  With classical polynomials,
working in the invariant ring incurs in loss of generality.

This leads to the next issue, namely simultaneous invariance under the
additive~$\Z_N$ action and the~$\Z_2$ action that flips the sign of the
variable. Say that the trigonometric sos polynomial~$p$ is invariant under both
actions. We simplified the Gram-matrix representation by first restricting to
the invariant ring~$\C\llbracket N\theta\rrbracket$ and then computing a
symmetry-adapted basis for the~$\Z_2$ action.  Is it better to consider both
actions separately? And why can we do so?

The latter question is the easier to answer.  Invariance under~$\Z_N$
amounts to restriction to the invariant ring.  After this restriction, all
polynomials computed to deal with the action of~$\Z_2$ are themselves
$\Z_N$-invariant, and we do not ruin $\Z_N$-invariance or lose generality.

As for the former question, let us fix~$N = 4$ for simplicity. Let~$r$ be the
generator of~$\Z_4$ and~$s$ be the generator of~$\Z_2$, so~$r^4 = e$ and~$s^2 =
e$, where~$e$ is the neutral element (we use multiplicative notation for the
groups).  Analyze the action of both groups on a polynomial~$p$ to conclude
that~$srs = r^{-1}$.  So~$p$ is invariant under the group~$\langle r, s : r^4 =
e,\ s^2 = e,\ srs = r^{-1}\rangle$, which is the dihedral group with~$8$
elements~$D_8$.

This group is not Abelian; it has five irreducible representations: four of
degree~$1$ and one of degree~$2$.  If we use the Gatermann and
Parrilo~\cite{GatermannP2004} approach to block-diagonalize the Gram-matrix
representation, we end up with five blocks.  For~$d = 100$, these blocks have
sizes~26, 25, 25, 25, and~50.  If we first restrict to the invariant ring and
then deal with the~$\Z_2$ action, we get two blocks of sizes~26 and~25, a much
better decomposition.

With classical polynomials we cannot restrict to the invariant ring, but why can
we not deal with actions separately as above?  Let us consider a very similar
situation.  Let~$G$ be the matrix group generated by~$R =
\smallpmatrix{0&1\\-1&0}$ and let~$H$ be the matrix group generated by~$S =
\smallpmatrix{-1&0\\0&1}$; note that~$G$ is isomorphic to~$\Z_4$ and~$H$ is
isomorphic to~$\Z_2$.

A $2 \times 2$ nonsingular matrix~$A$ acts on a two-variable polynomial~$p \in
\C[x, y]$ by $(A \cdot p)(x, y) = p(A^{-1} (x, y))$.  Let~$p \in \C[x, y]$ be a
Hermitian sos polynomial invariant under both~$G$ and~$H$.  So the
polynomial~$p$ is invariant under the group generated by both~$R$ and~$S$, which
is isomorphic to~$D_8$.  The Gatermann and Parrilo approach would give us a
decomposition into five blocks, one for each irreducible representation
of~$D_8$.

\enlargethispage{\baselineskip}  

We could try to consider the groups separately.  First, we consider the action
of~$G$, which is isomorphic to~$\Z_4$.  The irreducible character~$\chi_k(g) =
e^{ik\pi/2}$ for~$k = 0$, \dots,~$3$ gives us a vector~$v_k(x, y)$ of
polynomials, leading to the representation
\[
  p(x, y) = \sum_{k=0}^3 \langle Q_k, v_k(x, y) v_k(x, y)^*\rangle.
\]
Now we can apply the projection operators for the~$H$ action to the
polynomials in each~$v_k$, further breaking down each of the four blocks into
two blocks.  This works for the block~$v_0 v_0^*$, since this block corresponds
to the trivial representation and the polynomials in~$v_0$ are $G$-invariant.
This is no longer true for the other blocks: even though the polynomials in~$v_1
v_1^*$, say, are $G$-invariant, the polynomials in~$v_1$ itself are not.  If we
then decompose according to the~$H$ action, we get matrix elements that are
no longer $G$-invariant.

The conclusion is that what makes it possible to consider the action in steps
for trigonometric polynomials is that $\Z_N$-invariance amounts to working in
the invariant ring.  This is not true for classical polynomials, and so the same
approach fails, requiring us to consider the whole group at once and leading to
a worse reduction in matrix size.


\section{The theta number and the volume bound}%
\label{sec:volume-bound}

Theorem~\ref{thm:theta-bound} states that the theta prime number of a proper
cone gives an upper bound on the packing number that is at least as good as the
volume bound.  In this section, we will investigate the relation between the
theta number and the volume bound in a more general setting.

Any independent set problem can be transformed into a packing problem, and for
packing problems the volume bound can be naturally defined.  This leads to an
upper bound for the independence number, and it turns out that the theta number
is always at least as good as this upper bound.  For simplicity, we investigate
the situation for finite graphs, though results extend to infinite packing
graphs like~$G(K)$ as well.

Let~$G = (V, E)$ be a finite graph.  The Lovász theta number
of~$G$ is given as the optimal value of the following semidefinite
programming problem:
\[
  \begin{optprob}
    \vartheta(G) = \text{maximize}&\langle J, A\rangle\\
    &\trace A = 1,\\
    &A(u, v) = 0\quad\text{for all~$uv \in E$,}\\
    &A \in \R^{V\times V}\text{ is positive semidefinite.}
  \end{optprob}
\]
The theta prime number, denoted by~$\vartheta'(G)$, is the optimal value of the
problem above under the extra constraint that~$A$ has to be nonnegative.

A set~$C \subseteq V$ is a \defi{clique} if any two distinct vertices in~$C$ are
adjacent.  The clique number of~$G$, denoted by~$\omega(G)$, is the maximum
cardinality of a clique of~$G$.  An independent set of~$G$ can contain at most
one vertex from each clique of~$G$.  This leads to the following parameter,
which is an upper bound for~$\alpha(G)$:
\begin{equation}%
  \label{opt:clique}
  \begin{optprob}
    \kappa(G) = \text{maximize}&\one^\tp a\\
    &\sum_{v \in C} a(v) \leq 1\quad\text{for every clique~$C \subseteq
    V$,}\\[1ex]
    &a \in \R^V,\ a \geq 0.
  \end{optprob}
\end{equation}
The inequalities above corresponding to cliques of~$G$ are called \defi{clique
inequalities}.  We have~\cite[Chapter~67]{Schrijver2003B}:
\[
  \alpha(G) \leq \vartheta'(G) \leq \vartheta(G) \leq \kappa(G).
\]

Let~$(\Xcal, \mu)$ be a measure space for a finite~$\Xcal$.  Associate to
each~$v \in V$ a set~$S_v \subseteq \Xcal$ of positive measure in such a way
that
\begin{equation}%
  \label{eq:edge-condition}
  uv \in E\qquad\iff\qquad S_u \cap S_v \neq \emptyset.
\end{equation}
Let~$\Scal = \{\, S_v : v \in V\,\}$.

An \defi{$\Scal$-packing} is a collection of pairwise-disjoint sets in~$\Scal$.
The \defi{packing number} of~$\Scal$ is the maximum cardinality of an
$\Scal$-packing.  The independence number of~$G$ is the packing number
of~$\Scal$, which is bounded from above by
\begin{equation}%
  \label{eq:gen-volume-bound}
  \frac{\mu(\Xcal)}{\min\{\, \mu(S_v) : v \in V\,\}}.
\end{equation}
This upper bound for the packing number of~$\Scal$, and hence for the
independence number of~$G$, is the \defi{volume bound}.

This translation of the independent set problem to a packing problem works for
any measure space~$(\Xcal, \mu)$ and sets~$S_v$, as long
as~\eqref{eq:edge-condition} is satisfied.  A possible choice for a graph
without isolated vertices is to take~$\Xcal = E$ with the counting measure and
let~$S_v$ be the set of edges incident on~$v \in V$.  The volume bound then
implies that~$\alpha(G) \leq |E| / \delta(G)$, where~$\delta(G)$ is the minimum
degree of any vertex of~$G$.

The optimal value of the following linear programming problem gives an upper
bound for the packing number of~$\Scal$:
\begin{equation}%
  \label{opt:packing}
  \begin{optprob}
    p^* = \text{maximize}&\sum_{v \in V} a(v)\\[1ex]
    &\sum_{v \in V} a(v) \one_{S_v} \leq \one,\\[1ex]
    &a \in \R^V,\ a \geq 0.
  \end{optprob}
\end{equation}
The dual of this problem is
\[
  \begin{optprob}
    \text{minimize}&\one^\tp z\\
    &\one_{S_v}^\tp z \geq 1\quad\text{for all~$v \in V$,}\\[1ex]
    &z \in \R^\Xcal,\ z \geq 0.
  \end{optprob}
\]
A feasible solution of the dual problem can be obtained by setting
\[
  z(x) = \frac{\mu(\{x\})}{\min\{\, \mu(S_v) : v \in V\,\}}.
\]
It follows that~$p^*$ is at most the volume bound~\eqref{eq:gen-volume-bound}.

Now~\eqref{opt:packing} has one constraint for each element of~$\Xcal$, the
constraint for~$x \in \Xcal$ being
\[
  \sum_{v \in V,\ x \in S_v} a(v) \leq 1.
\]
Note that~$C = \{\, v \in V : x \in S_v\}$ is a clique of~$G$, and so the
constraint above is a clique inequality;  it follows that~$\kappa(G) \leq p^*$.
The conclusion is that~$\kappa$, $\vartheta$, and~$\vartheta'$ are always at
least as good as the bound of~\eqref{opt:packing} and the corresponding volume
bound for \emph{any} packing problem associated with~$G$ as above.

At least for vertex-transitive graphs it is possible to determine the best
possible volume bound from a packing problem associated with~$G$. Indeed, if~$G$
is vertex transitive, then by an averaging argument~\eqref{opt:clique} has an
optimal solution~$a$ that is a multiple of~$\one$, namely~$a = \omega(G)^{-1}
\one$. So~$\kappa(G) = |V| / \omega(G)$, and hence this is a lower bound on any
volume bound for~$G$.  We claim that there is a packing problem associated
with~$G$ whose volume bound is~$\kappa(G)$.

Indeed, let~$\Xcal$ be the set of all cliques of~$G$.  For~$v \in V$, let~$S_v$
be the set of all cliques of~$G$ that contain~$v$.  Note that the sets~$S_v$
satisfy~\eqref{eq:edge-condition}.  Let~$\Gamma$ be the automorphism group
of~$G$ and fix a maximum clique~$C$ of~$G$.  Consider the measure~$\mu$
on~$\Xcal$ such that
\[
  \mu(\{S\}) = \begin{cases}
    1&\text{if~$S = \sigma C$ for some~$\sigma \in \Gamma$;}\\
    0&\text{otherwise.}
  \end{cases}
\]
On the one hand,
\[
  \mu(\Xcal) = |\{\,\sigma C : \sigma \in \Gamma\,\}|
  = \frac{|\Gamma|}{|\{\,\sigma \in \Gamma : \sigma C = C\,\}|}.
\]
On the other hand, for all~$v \in V$ we have
\[
  \mu(S_v) = |\{\, \sigma C : \sigma \in \Gamma,\ v \in \sigma C\,\}| =
  \frac{|\{\,\sigma \in \Gamma : v \in \sigma C\,\}|}{|\{\,\sigma \in \Gamma :
  \sigma C = C\,\}|};
\]
in particular,~$\mu(S_v)$ does not depend on~$v$.  So the volume bound is
\[
  \frac{\mu(\Xcal)}{\mu(S_v)} = \frac{|\Gamma|}{|\{\,\sigma \in \Gamma : v \in
  \sigma C\,\}|} = \frac{|V|}{|C|} = \frac{|V|}{\omega(G)} = \kappa(G),
\]
as claimed.

What about the volume bound of~\S\ref{sec:packing-volume} for a proper cone~$K
\subseteq \R^n$?  Is it the best possible volume bound, equal to the analogue of
the~$\kappa$ parameter for the infinite graph~$G(K)$?

Fix~$e \in S^{n-1}$ and let~$C = \{\, S \in \sorto(n) : e \in SK^\circ\,\}$.
If~$S$, $T \in C$, then~$S K^\circ \cap T K^\circ \neq \emptyset$.  It follows
that~$C$ is a clique of~$G(K)$, hence the clique number of~$G(K)$ is at least
\[
  \mu(C) = \frac{\omega(K \cap S^{n-1})}{\omega(S^{n-1})},
\]
where~$\mu$ is the Haar probability measure on~$\sorto(n)$ and~$\omega$ is the
surface measure on~$S^{n-1}$.  So the volume bound for~$K$ is best possible if
and only if the clique~$C$ defined above is a maximum-measure clique, a fact
that is unclear and likely hard to settle.


\section{Results}%
\label{sec:results}

Results for packings of spherical polygons are given in Table~\ref{tab:bounds},
but to put them into perspective we first discuss bounds for spherical cap
packings.

\begin{table}[t]
  \footnotesize
  \def\ct#1{\multicolumn{1}{c}{#1}}
  \begin{center}
    \begin{tabular}{rrrrrr}
      \ct{$n$\rule[-0.9ex]{0pt}{0pt}} & \ct{\sl Manifold opt.} & \ct{\sl Best radius} & \ct{\sl
      Difference} & \ct{\sl Volume bound} & \ct{$\vartheta'$}\\
      \hline
      5\rlap{${}^\star$}\rule{0pt}{2.6ex}&0.785398163346&0.785398163397&$5.07 \times 10^{-11}$&  6.828427125582 & 6.000000000608\\
      6\rlap{${}^\star$}&0.785398163347&0.785398163397&$5.02 \times 10^{-11}$&                   6.828427125573 & 6.000000000602\\
      7\rlap{${}^\star$}&0.679539948739&0.679539948816&$7.63 \times 10^{-11}$&                   9.003408786469 & 7.688838860349\\
      8\rlap{${}^\star$}&0.653263580805&0.653263580858&$5.37 \times 10^{-11}$&                   9.713664851489 & 8.279523460373\\
      9\rlap{${}^\star$}&0.615479708605&0.615479708670&$6.51 \times 10^{-11}$&                  10.898979487799 & 9.519184645515\\
      10\rlap{${}^\star$}&0.577239916649&0.577239916709&$5.96 \times 10^{-11}$&                 12.343549645544 & 10.791403558300\\
      11\rlap{${}^\star$}&0.553574358834&0.553574358897&$6.26 \times 10^{-11}$&                 13.391435054800 & 12.000000002063\\
      12\rlap{${}^\star$}&0.553574358833&0.553574358897&$6.32 \times 10^{-11}$&                 13.391435054828 & 12.000000002083\\
      13\rlap{${}^\star$}&0.498611796104&0.498611796219&$1.14 \times 10^{-10}$&                 16.426734561716 & 14.496932163957\\
      14\rlap{${}^\star$}&0.485817371369&0.485817371443&$7.37 \times 10^{-11}$&                 17.285126813258 & 15.228084228233\\
      15&0.468253077068&0.468253077154&$8.61 \times 10^{-11}$&                                  18.580122775901 & 16.448613310944\\
      16&0.455918360677&0.455918360799&$1.23 \times 10^{-10}$&                                  19.580404566686 & 17.368859970312\\
      17&0.445847224435&0.445847224575&$1.40 \times 10^{-10}$&                                  20.459446969949 & 18.135487708677\\
      18&0.432463395488&0.432463395991&$5.03 \times 10^{-10}$&                                  21.724035798875 & 19.392814886060\\
      19&0.416190457510&0.416190463888&$6.38 \times 10^{-09}$&                                  23.428995553064 & 20.979270104826\\
      20&0.413913873373&0.413913874894&$1.52 \times 10^{-09}$&                                  23.683688975153 & 21.178648745611\\
      21&0.398047141625&0.398050462820&$3.32 \times 10^{-06}$&                                  25.581897377143 & 22.789038918072\\
      22&0.390431558872&0.390431560119&$1.25 \times 10^{-09}$&                                  26.576271924753 & 23.697057479768\\
      23&0.381441386802&0.381441395654&$8.85 \times 10^{-09}$&                                  27.827647704305 & 24.812525569503\\
      24\rlap{${}^\star$}&0.381273868311&0.381273869375&$1.06 \times 10^{-09}$&                 27.851808806866 & 24.835140479898\\
    \end{tabular}
  \end{center}

  \caption{The table shows, for each number~$n$ of caps, the radius found by the
  manifold optimization method and the best known radius for a packing of~$n$
  caps, truncated to~12 decimal places, together with the difference between the
  two. The theta prime (linear programming) upper bound and the volume bound for
  a packing of caps with the manifold optimization radius are also given; the
  linear programming bound was computed with polynomials of degree~30.  Packings
  with the best radius in rows marked with a~$\star$ are known to be
  optimal~\cite{Cohn2024}.}%
  \label{tab:caps}
\end{table}

Schütte and van der Waerden~\cite{SchutteW1951} constructed optimal spherical
cap packings with~5--9 caps.  Only a few other optimal packings are
known~\cite{Cohn2024}.  To test the performance of the manifold optimization
method of~\S\ref{sec:manifold-opt}, we used it to find spherical cap packings
with~5--24 caps.  The configurations found numerically closely approximate the
best known configurations, some of which are optimal. The results are summarized
in Table~\ref{tab:caps}, where the theta prime upper bound (that is, the linear
programming bound of Delsarte, Goethals, and Seidel~\cite{DelsarteGS1977}) and
the volume bound for each configuration are also given.  These results indicate
that the manifold optimization approach can find near-optimal packings and also
what can be expected of the theta prime upper bound.

Table~\ref{tab:bounds} summarizes our results for packings of spherical
polygons.  Below is a description of each column.

\begin{description}
  \item[$N$] number of sides of the polygon being packed.

  \item[Radius] the angular radius of the polygon, in radians.

  \item[Lower bound] best known lower bound, coming from a construction.

  \item[Volume bound] the volume bound.

  \item[3-point bound] The 3-point~\cite{BachocV2008} bound for a packing of
    spherical caps with radius~$\tau$, where~$\tau$ is the inradius of the
    polygon (see~\S\ref{sec:spherical-codes}).  This bound is computed with
    polynomials of total degree~$28$.  If there is a packing of spherical caps
    with radius at least~$\tau$ and cardinality greater than the lower bound,
    then the cardinality of such a packing is given in parenthesis.  In this
    case, spherical code bounds can never prove maximality of the configuration.

  \item[$d$] Degree parameter~$d$ for the theta prime number of~\S\ref{sec:sos}.

  \item[$\vartheta'$] The value of~$\vartheta'(G(K))$ computed using the
    semidefinite programming formulation of~\S\ref{sec:sos}.
\end{description}

A $\dagger$ indicates a configuration that is proven maximal by the theta prime
bound, but also by another bound (the volume bound or the 3-point bound),
whereas a~$\star$ indicates a configuration that is proven maximal by the theta
prime bound alone.  A~$-$ indicates that the theta prime bound is better than
the floor of the volume bound and 3-point bound, though it does not prove
maximality.
\bigbreak

\begingroup
  \footnotesize
  \def\ct#1{\multicolumn{1}{c}{#1}}
  \setlength\LTleft{-1000pt plus 1fill}
  \setlength\LTright{-1000pt plus 1fill}

  \begin{longtable}{cccrrrrl}
    \tabularnewline
    \caption{Lower and upper bounds for packings of regular spherical
    polygons.  The radius is truncated to~6 decimal places; other numbers are
    truncated to~8 decimal places.}
    \endfoot
          &                                     &                       &\multicolumn{4}{c}{\sl Upper bounds}                                                    &\\
    $N$   &   {\sl Radius}                      &   {\sl Lower bound}   & \ct{\sl Volume bound}  &  \ct{\sl 3-point bound}   & \ct{$d$}&  \ct{$\vartheta'$}      &   {\sl Remarks}\\
    \hline\noalign{\rule{0pt}{1pt}}
    \endhead
    \rowcolor{gray!20}
    3\rlap{${}^\dagger$}&0.955316&8&8.00000000&(9) 9.00000000&14&8.00148720&conjectured\\
    3&0.846584&9&10.84079445&(12) 12.72238983&14&10.43859589&conjectured\\
    \rowcolor{gray!20}
    3\rlap{${}^\star$}&0.828930&10&11.41373633&(12) 13.49278286&14&10.97882053&conjectured\\
    3&0.790798&11&12.78772029&(15) 15.65588685&14&12.31168623&conjectured\\
    \rowcolor{gray!20}
    3\rlap{${}^-$}&0.760924&12&14.01415753&(17) 17.34929797&14&13.55069849&conjectured\\*
    \rowcolor{gray!20}
    &&&&&20&13.53801952&\\
    3\rlap{${}^-$}&0.738005&13&15.05923710&(18) 18.94361211&14&14.62806719&conjectured\\
    \rowcolor{gray!20}
    3&0.719838&14&15.96041040&(20) 20.34188158&14&15.57438551&\\
    3\rlap{${}^-$}&0.699345&15&17.06344004&(20) 21.78736915&14&16.77117697&\\
    \rowcolor{gray!20}
    3&0.685159&16&17.88615258&(22) 23.08083263&14&17.67500325&\\*
    \rowcolor{gray!20}
    &&&&&20&17.62851173&\\
    3&0.669803&17&18.83686063&(24) 24.57551675&14&18.71223464&\\*
    &&&&&20&18.67127997&\\
    \rowcolor{gray!20}
    3&0.658112&18&19.60600155&(24) 25.66413630&14&19.57509688&\\*
    \rowcolor{gray!20}
    &&&&&20&19.55107105&\\
    3\rlap{${}^\dagger$}&0.652634&19&19.98080594&(24) 26.21183173&14&19.99058786&\\*
    &&&&&20&19.98296276&\\
    \rowcolor{gray!20}
    3&0.615479&20&22.79466514&(28) 30.43073692&14&22.59030673&tetrahedron kissing\\*
    \rowcolor{gray!20}
    &&&&&20&22.51176943&\\
    3\rlap{${}^\dagger$}&0.651805&20&20.03839317&(24) 26.29552502&14&20.04764259&\\*
    &&&&&20&20.03822089&\\
    \rowcolor{gray!20}
    3&0.604417&21&23.73485658&(30) 31.77968284&14&23.48417323&\\*
    \rowcolor{gray!20}
    &&&&&20&23.40372464&\\
    3\rlap{${}^-$}&0.587346&22&25.29181773&(32) 34.02763309&14&24.99197449&\\*
    &&&&&20&24.90075396&\\
    \rowcolor{gray!20}
    3&0.570870&23&26.92948343&(34) 36.66255105&14&26.59150848&\\*
    \rowcolor{gray!20}
    &&&&&20&26.48770382&\\
    3\rlap{${}^-$}&0.558125&24&28.29737925&(36) 38.71347302&14&27.93923717&\\*
    &&&&&20&27.82052189&\\
    \hline\noalign{\vskip1pt}\hline
    \rowcolor{gray!20}
    4\rlap{${}^\dagger$}\rule{0pt}{2.6ex}&1.183199&3&3.70443713&3.00000000&10&3.00000001&\\
    4\rlap{${}^\dagger$}&1.063440&4&4.71541086&4.07790842&10&4.10116100&\\
    \rowcolor{gray!20}
    4&0.955316&5&6.00000117&(6) 6.00000000&10&6.00000044&\\*
    \rowcolor{gray!20}
    &&&&&18&6.00000026&\\
    4\rlap{${}^\dagger$}&0.955316&6&6.00000000&6.00000000&8&6.00000463&cube tiling\\
    \rowcolor{gray!20}
    4\rlap{${}^-$}&0.785398&7&9.24441273&(8) 9.00000000&10&8.46487808&octahedron kissing\\*
    \rowcolor{gray!20}
    &&&&&18&8.43452493&\\
    4\rlap{${}^\star$}&0.812554&7&8.58304028&(8) 8.13119410&10&7.84845101&\\
    \rowcolor{gray!20}
    4\rlap{${}^\star$}&0.777160&8&9.45911352&(9) 9.10977817&10&8.67678741&\\
    4\rlap{${}^\star$}&0.737706&9&10.59062399&(10) 10.27965041&10&9.72908492&\\
    \rowcolor{gray!20}
    4\rlap{${}^\star$}&0.701465&10&11.80498479&(12) 12.08022888&10&10.87300195&\\
    4\rlap{${}^-$}&0.665589&11&13.20988402&(12) 13.08564377&10&12.36460904&\\*
    &&&&&18&12.24240918&\\
    \rowcolor{gray!20}
    4\rlap{${}^\star$}&0.659825&12&13.45739970&13.35693928&10&12.61827032&\\
    4\rlap{${}^-$}&0.611102&13&15.83957294&(15) 16.21135247&10&14.96350191&\\*
    &&&&&18&14.70525606&\\
    \rowcolor{gray!20}
    4\rlap{${}^-$}&0.598860&14&16.53186711&(16) 16.95237174&10&15.70271462&\\*
    \rowcolor{gray!20}
    &&&&&18&15.36791722&\\
    4\rlap{${}^-$}&0.585592&15&17.33205714&(17) 17.82667450&10&16.60940922&\\*
    &&&&&18&16.16157960&\\
    \rowcolor{gray!20}
    4&0.564943&16&18.69188151&(18) 19.54161047&10&18.18025701&\\
    4\rlap{${}^-$}&0.549177&17&19.83538800&(20) 20.76935334&10&19.48304253&\\*
    &&&&&18&18.59577345&\\
    \rowcolor{gray!20}
    4\rlap{${}^-$}&0.537891&18&20.71669278&(20) 21.67448098&10&20.53747812&\\*
    \rowcolor{gray!20}
    &&&&&18&19.43932673&\\
    4\rlap{${}^-$}&0.516950&19&22.50799105&(22) 23.89372360&10&22.82038760&\\*
    &&&&&18&21.21727058&\\
    \rowcolor{gray!20}
    4\rlap{${}^-$}&0.510062&20&23.14608098&(24) 24.61734992&10&23.66899224&\\*
    \rowcolor{gray!20}
    &&&&&18&21.84775700&\\
    4\rlap{${}^-$}&0.495188&21&24.61617987&(24) 26.24964661&10&25.64625742&\\*
    &&&&&18&23.29803247&\\
    \rowcolor{gray!20}
    4\rlap{${}^-$}&0.485876&22&25.60629269&(26) 27.32759700&10&27.00764135&\\*
    \rowcolor{gray!20}
    &&&&&18&24.26292605&\\
    4\rlap{${}^-$}&0.475163&23&26.81816363&(27) 28.70062328&10&28.74508182&\\*
    &&&&&18&25.44302605&\\
    \rowcolor{gray!20}
    4\rlap{${}^-$}&0.465337&24&28.00434523&(28) 30.17773550&10&30.52343961&\\*
    \rowcolor{gray!20}
    &&&&&18&26.62321441&\\
    \hline\noalign{\vskip1pt}\hline
    5\rlap{${}^\dagger$}\rule{0pt}{2.6ex}&1.107148&3&4.00000000&3.23606797&10&3.23606979&\\*
    &&&&&14&3.23606992&\\
    \rowcolor{gray!20}
    5\rlap{${}^\dagger$}&1.017221&4&4.76984107&4.04611156&10&4.06501130&icosahedron kissing\\
    5\rlap{${}^\dagger$}&1.027058&4&4.67521434&4.02586260&10&4.00665492&\\*
    &&&&&14&4.00425871&\\
    \rowcolor{gray!20}
    5\rlap{${}^\star$}&0.865924&5&6.67214088&(6) 6.01513743&10&5.95011752&\\*
    \rowcolor{gray!20}
    &&&&&14&5.94289949&\\
    5\rlap{${}^\dagger$}&0.850308&6&6.92983044&6.05170751&10&6.23338399&\\*
    &&&&&14&6.22491869&\\
    \rowcolor{gray!20}
    5\rlap{${}^\star$}&0.756498&7&8.83521938&(8) 8.00493858&10&8.03932801&\\*
    \rowcolor{gray!20}
    &&&&&14&7.91739923&\\
    5\rlap{${}^\dagger$}&0.731849&8&9.46296954&8.44605070&10&8.76975807&\\*
    &&&&&14&8.63164917&\\
    \rowcolor{gray!20}
    5\rlap{${}^\dagger$}&0.692196&9&10.61854972&9.56183171&10&10.10476653&\\*
    \rowcolor{gray!20}
    &&&&&14&9.99379488&\\
    5&0.658224&10&11.78061879&11.03162960&10&11.62203944&\\*
    &&&&&14&11.57505633&\\*
    &&&&&20&11.53768330&\\
    \rowcolor{gray!20}
    5&0.652358&11&12.00000004&(12) 12.00000000&10&12.00002213&\\*
    \rowcolor{gray!20}
    &&&&&14&12.00000393&\\
    5\rlap{${}^\dagger$}&0.652358&12&12.00000510&12.00000001&10&12.00002878&\\
    \rowcolor{gray!20}
    5&0.565329&13&16.10441278&(14) 15.26026568&10&16.02328802&\\*
    \rowcolor{gray!20}
    &&&&&14&15.32016393&\\
    5&0.554476&14&16.75654644&(15) 15.89359509&10&16.78516545&\\*
    &&&&&14&15.95264213&\\
    \rowcolor{gray!20}
    5&0.539673&15&17.71049491&(16) 16.83442133&10&17.93479927&\\*
    \rowcolor{gray!20}
    &&&&&14&16.89037273&\\
    5&0.524729&16&18.75684983&(17) 17.87168328&10&19.26071663&\\*
    &&&&&14&17.89517658&\\
    \rowcolor{gray!20}
    5&0.512308&17&19.69727207&(18) 18.91986202&10&20.52742197&\\*
    \rowcolor{gray!20}
    &&&&&14&18.82818266&\\
    5\rlap{${}^-$}&0.498146&18&20.85668953&(19) 20.21849698&10&22.19285776&\\*
    &&&&&14&19.96762912&\\*
    &&&&&20&19.62178909&\\
    \rowcolor{gray!20}
    5&0.480722&19&22.42647447&(20) 21.66650360&10&24.64563342&\\*
    \rowcolor{gray!20}
    &&&&&14&21.49920555&\\*
    \rowcolor{gray!20}
    &&&&&20&21.05564494&\\
    5\rlap{${}^-$}&0.476634&20&22.81997390&(21) 22.05813478&10&25.30113061&\\*
    &&&&&14&21.88236952&\\*
    &&&&&20&21.39579388&\\
    \rowcolor{gray!20}
    5\rlap{${}^-$}&0.459979&21&24.53327575&(24) 24.03727825&10&28.35052843&\\*
    \rowcolor{gray!20}
    &&&&&14&23.60037910&\\*
    \rowcolor{gray!20}
    &&&&&20&22.91769941&\\
    5\rlap{${}^-$}&0.451462&22&25.48362122&(24) 24.98748702&10&30.21979777&\\*
    &&&&&14&24.58375506&\\*
    &&&&&20&23.81047251&\\
    \rowcolor{gray!20}
    5\rlap{${}^-$}&0.436889&23&27.24080150&(25) 26.77138644&10&33.93749530&\\*
    \rowcolor{gray!20}
    &&&&&14&26.51402878&\\*
    \rowcolor{gray!20}
    &&&&&20&25.41736863&\\
    5\rlap{${}^-$}&0.435221&24&27.45326251&(25) 26.98022302&10&34.40324778&\\*
    &&&&&14&26.75588236&\\*
    &&&&&20&25.61687007&
\label{tab:bounds}  
\end{longtable}
\endgroup


All lower bounds in Table~\ref{tab:bounds} have been computed using the manifold
optimization method of~\S\ref{sec:manifold-opt}, except for the tetrahedron
kissing number and the cube tiling configurations.  The optimization was
implemented in Julia using the packages \texttt{Manopt} and
\texttt{Manifolds}~\cite{AxenBBR2023, Bergmann2022}.  The results have been
checked and are rigorous unless otherwise noted; the radius given in the table
has been truncated to~6 decimal places. The lower bounds for pentagons found
through manifold optimization approximate closely those by Tarnai and
Gáspár~\cite{TarnaiG2001}, except for the packing of~9 pentagons. For this case,
the manifold optimization method gives a significantly larger radius for the
polygons.

The theta prime upper bounds have been computed by a Julia program using the
\texttt{ClusteredLowRankSolver} package~\cite{LeijenhorstL2024}.  Computations
for~$N = 3$ and $d =14$ require a few hours; for~$d = 20$ the computation
requires over~6 days and~128GB of memory.  Computations have been carried out in
high-precision floating-point arithmetic; the bounds obtained can be rigorously
verified using standard techniques~\cite{LaatOV2014}.

Some bounds for triangles are marked ``conjectured'' in the table.  This means
that the triangle being packed is not admissible according to
Theorem~\ref{thm:admissible-andreas}.  The inner angles of their polars are at
least~$\pi/2$, however, and so if Conjecture~\ref{conj:admissible} is true then
the theta prime upper bounds for these entries of the table are valid.

A disappointing finding is that the theta prime number for the tetrahedron
kissing number problem is barely better than the volume bound.  The theta prime
number is sharp for the packing of faces of the cube, which tile the sphere.  In
this case, as in the case of every tiling of the sphere, the volume bound is
sharp, and hence the theta prime number is also sharp.  It is at first sight
noteworthy however that a sharp solution exists for~$d = 8$.  This fact is less
surprising once one notes that the linear programming bound for the
corresponding spherical-cap-packing problem is sharp, what also explains why the
3-point bound is sharp in this case.

In many other cases, the theta prime number is enough to prove maximality of the
corresponding configuration.  Sometimes the 3-point bound also proves
maximality, but in some cases no bound from spherical codes can prove maximality
of the configuration, because for the corresponding inradius there is a
spherical code with larger cardinality than that of the configuration (see the
table of spherical codes maintained by Cohn~\cite{Cohn2024}).

Determining the kissing number of the tetrahedron was our original motivation.
We could consider the kissing number of the other Platonic solids: how many
cubes, octahedra, dodecahedra, or icosahedra can share a vertex in common?  This
question reduces to finding the packing number of the cones corresponding to the
vertex figures of each polytope.  For the cube, the answer is clear. For the
dodecahedron, the corresponding cone gives a spherical triangle that is not
admissible because its polar has inner angle smaller than~$\pi/2$, and so our
methods fail.  For the octahedron and icosahedron we get lower and upper bounds,
indicated in Table~\ref{tab:bounds}.  These bounds show that the kissing number
of the octahedron is either~7 or~8 and that the kissing number of the
icosahedron is~4.

Finally, our results suggest that the obstacle for proving optimality or
near-optimality of configurations is the quality of the theta prime upper bound,
which is even sometimes very close to the volume bound.  This leads to the most
obvious future direction of research: implementing higher-order bounds for
packings of regular spherical polygons.  Natural candidates here are the 3-point
bound, which has been described for general topological packing graphs by de
Laat, Machado, Oliveira, and Vallentin~\cite{LaatMOV2022}, or the Lasserre
hierarchy, which was extended to topological packing graphs by de Laat and
Vallentin~\cite{LaatV2015}.  The challenge here is not only to develop the
required harmonic analysis theory, but also to simplify the resulting problem to
make it tractable.

All programs used in the computation of the bounds as well as a complete
database with all the configurations and bounds can be found on Harvard
Dataverse~\cite{OliveiraSV2026}.


\section*{Acknowledgments}

We would like to thank Fabrício Caluza Machado, who did preliminary work on the
theta prime number for~$\sorto(3)$ upon which we build, Karla Leipold for
help with manifold optimization, and Martin Olschewski for facilitating access
to computational resources at the University of Cologne.

Upper bound computations were carried out on two clusters at the Delft Institute
of Applied Mathematics, managed by Joffrey Wallaart, and at the TU Delft
supercomputer DelftBlue provided by the Delft High Performance Computing
Centre\footnote{\url{https://www.tudelft.nl/dhpc}}.


{

}


\frenchspacing

\begin{thebibliography}{44}
\bibitem{Addabbo2015}
C.~Addabbo, {\it Il Libellus de impletione loci di Francesco Maurolico
e la tassellazione dello spazio}, PhD Thesis, Università di Pisa,
2015, 299pp.

\bibitem{AndrewsAR1999}
G.E.~Andrews, R.~Askey, and R.~Roy, {\it Special functions},
Encyclopedia of Mathematics and its Applications~71, Cambridge
University Press, Cambridge, 1999.

\bibitem{AxenBBR2023}
S.D.~Axen, M.~Baran, R.~Bergmann, and K.~Rzecki, {\tt Manifolds.jl}:
an extensible Julia framework for data analysis on manifolds, {\it ACM
Trans. Math. Software\/}~49 (2023) Art. 33, 23.

\bibitem{BachocGSV2012}
C.~Bachoc, D.~Gijswijt, A.~Schrijver, and F.~Vallentin, Invariant
semidefinite programs, in: {\it Handbook on semidefinite, conic and
polynomial optimization}, Internat. Ser. Oper. Res. Management
Sci.~166, Springer, New York, 2012, pp.~219--269.

\bibitem{BachocNOV2009}
C.~Bachoc, G.~Nebe, F.M.~de Oliveira Filho, and F.~Vallentin, Lower
bounds for measurable chromatic numbers, {\it Geom. Funct. Anal.\/}~19
(2009) 645--661.

\bibitem{BachocV2008}
C.~Bachoc and F.~Vallentin, New upper bounds for kissing numbers from
semidefinite programming, {\it J. Amer. Math. Soc.\/}~21 (2008)
909--924.

\bibitem{Bergmann2022}
R.~Bergmann, Manopt.jl: optimization on manifolds in Julia, {\it
Journal of Open Source Software\/}~7 (2022) 3866.

\bibitem{BirginM2014}
E.G.~Birgin and J.M.~Martínez, {\it Practical augmented Lagrangian
methods for constrained optimization}, Fundamentals of Algorithms~10,
Society for Industrial and Applied Mathematics (SIAM), Philadelphia,
PA, 2014.

\bibitem{BirginMMR2006}
E.G.~Birgin, J.M.~Martínez, W.F.~Mascarenhas, and D.P.~Ronconi, Method
of sentinels for packing items within arbitrary convex regions, {\it
Journal of the Operational Research Society\/}~57 (2006) 735--746.

\bibitem{Bochner1941}
S.~Bochner, Hilbert distances and positive definite functions, {\it
Ann. of Math. (2)\/}~42 (1941) 647--656.

\bibitem{Boumal2023}
N.~Boumal, {\it An introduction to optimization on smooth manifolds},
Cambridge University Press, Cambridge, 2023.

\bibitem{BrockerD1985}
T.~Bröcker and T.~tom Dieck, {\it Representations of compact Lie
groups}, Graduate Texts in Mathematics~98, Springer-Verlag, New York,
1985.

\bibitem{Casselman2004}
B.~Casselman, The difficulties of kissing in three dimensions, {\it
Notices Amer. Math. Soc.\/}~51 (2004) 884--885.

\bibitem{ChirikjianK2001}
G.S.~Chirikjian and A.B.~Kyatkin, {\it Engineering applications of
noncommutative harmonic analysis\/} (With emphasis on rotation and
motion groups), CRC Press, Boca Raton, FL, 2001.

\bibitem{Cohn2024}
H.~Cohn, Table of spherical codes, {\tt
https://hdl.handle.net/1721.1/153543}, DSpace@MIT, 2024.

\bibitem{CohnE2003}
H.~Cohn and N.~Elkies, New upper bounds on sphere packings. I, {\it
Ann. of Math. (2)\/}~157 (2003) 689--714.

\bibitem{CohnLL2024}
H.~Cohn, D.~de Laat, and N.M.~Leijenhorst, Optimality of spherical
codes via exact semidefinite programming bounds, arXiv:2403.16874,
2024, 32pp.

\bibitem{DelsarteGS1977}
P.~Delsarte, J.M.~Goethals, and J.J.~Seidel, Spherical codes and
designs, {\it Geometriae Dedicata\/}~6 (1977) 363--388.

\bibitem{DostertGOV2017}
M.~Dostert, C.~Guzmán, F.M.~de Oliveira Filho, and F.~Vallentin, New
upper bounds for the density of translative packings of
three-dimensional convex bodies with tetrahedral symmetry, {\it
Discrete Comput. Geom.\/}~58 (2017) 449--481.

\bibitem{Dumitrescu2017}
B.~Dumitrescu, {\it Positive trigonometric polynomials and signal
processing applications}, Signals and Communication Technology,
Springer, Cham, 2017.

\bibitem{Folland1995}
G.B.~Folland, {\it A Course in Abstract Harmonic Analysis}, Studies in
Advanced Mathematics, CRC Press, Boca Raton, FL, 1995.

\bibitem{GatermannP2004}
K.~Gatermann and P.A.~Parrilo, Symmetry groups, semidefinite programs,
and sums of squares, {\it J. Pure Appl. Algebra\/}~192 (2004) 95--128.

\bibitem{KabatianskyL1978}
G.A.~Kabatiansky and V.I.~Levenshtein, Bounds for packings on the
sphere and in space, {\it Problemy Peredači Informacii\/}~14 (1978)
3--25.

\bibitem{LaatMOV2022}
D.~de Laat, F.C.~Machado, F.M.~de Oliveira Filho, and F.~Vallentin,
$k$-point semidefinite programming bounds for equiangular lines, {\it
Math. Program.\/}~194 (2022) 533--567.

\bibitem{LaatOV2014}
D.~de Laat, F.M.~de Oliveira Filho, and F.~Vallentin, Upper bounds for
packings of spheres of several radii, {\it Forum Math. Sigma\/}~2
(2014) Paper No. e23, 42.

\bibitem{LaatV2015}
D.~de Laat and F.~Vallentin, A semidefinite programming hierarchy for
packing problems in discrete geometry, {\it Math. Program.\/}~151
(2015) 529--553.

\bibitem{LagariasZ2012}
J.C.~Lagarias and C.~Zong, Mysteries in packing regular tetrahedra,
{\it Notices Amer. Math. Soc.\/}~59 (2012) 1540--1549.

\bibitem{LeijenhorstL2024}
N.M.~Leijenhorst and D.~de Laat, Solving clustered low-rank
semidefinite programs arising from polynomial optimization, {\it Math.
Program. Comput.\/}~16 (2024) 503--534.

\bibitem{LiuB2020}
C.~Liu and N.~Boumal, Simple algorithms for optimization on Riemannian
manifolds with constraints, {\it Appl. Math. Optim.\/}~82 (2020)
949--981.

\bibitem{Lovasz1979}
L.~Lovász, On the Shannon capacity of a graph, {\it IEEE Trans.
Inform. Theory\/}~25 (1979) 1--7.

\bibitem{MascarenhasB2010}
W.F.~Mascarenhas and E.G.~Birgin, Using sentinels to detect
intersections of convex and nonconvex polygons, {\it Comput. Appl.
Math.\/}~29 (2010) 247--267.

\bibitem{McElieceRR1978}
R.J.~McEliece, E.R.~Rodemich, and H.C.~Rumsey, The Lovász bound and
some generalizations, {\it J. Combin. Inform. System Sci.\/}~3 (1978)
134--152.

\bibitem{MusinT2015}
O.R.~Musin and A.S.~Tarasov, The Tammes problem for $N=14$, {\it Exp.
Math.\/}~24 (2015) 460--468.

\bibitem{MusinT2012}
O.R.~Musin and A.S.~Tarasov, The strong thirteen spheres problem, {\it
Discrete Comput. Geom.\/}~48 (2012) 128--141.

\bibitem{OliveiraSV2026}
F.M.~de Oliveira Filho, A.~Spomer, and F.~Vallentin, Data and code
for: Bounding the density of spherical polygon packings, {\tt
https://doi.org/10.7910/DVN/PNH45K}, Harvard Dataverse, 2026, V1.

\bibitem{OliveiraV2018}
F.M.~de Oliveira Filho and F.~Vallentin, Computing upper bounds for
the packing density of congruent copies of a convex body, in: {\it New
trends in intuitive geometry}, Bolyai Soc. Math. Stud.~27, János
Bolyai Math. Soc., Budapest, 2018, pp.~155--188.

\bibitem{OliveiraV2010}
F.M.~de Oliveira Filho and F.~Vallentin, Fourier analysis, linear
programming, and densities of distance avoiding sets in $\Bbb R^n$,
{\it J. Eur. Math. Soc. (JEMS)\/}~12 (2010) 1417--1428.

\bibitem{Schrijver1979}
A.~Schrijver, A comparison of the Delsarte and Lovász bounds, {\it
IEEE Trans. Inform. Theory\/}~25 (1979) 425--429.

\bibitem{Schrijver2003B}
A.~Schrijver, {\it Combinatorial optimization. Polyhedra and
efficiency. Vol. B\/} (Matroids, trees, stable sets, Chapters 39--69),
Algorithms and Combinatorics~24, Springer-Verlag, Berlin, 2003.

\bibitem{SchutteW1951}
K.~Schütte and B.L.~van der Waerden, Auf welcher Kugel haben $5$, $6$,
$7$, $8$ oder $9$ Punkte mit Mindestabstand Eins Platz?, {\it Math.
Ann.\/}~123 (1951) 96--124.

\bibitem{SchutteW1953}
K.~Schütte and B.L.~van der Waerden, Das Problem der dreizehn Kugeln,
{\it Math. Ann.\/}~125 (1953) 325--334.

\bibitem{Serre1977}
J.~Serre, {\it Linear representations of finite groups\/} (Translated
from the second French edition by Leonard L. Scott), Graduate Texts in
Mathematics, Vol. 42, Springer-Verlag, New York-Heidelberg, 1977.

\bibitem{Spomer2026}
A.~Spomer, {\it Packing Regular Spherical Polygons\/} (to appear), PhD
Thesis, University of Cologne, 2026.

\bibitem{TarnaiG2001}
T.~Tarnai and Zs.~Gáspár, Packing of equal regular pentagons on a
sphere, {\it R. Soc. Lond. Proc. Ser. A Math. Phys. Eng. Sci.\/}~457
(2001) 1043--1058.

\end{thebibliography}
\end{document}